\newcommand{\ul}[1]{\underline{#1}}
\newcommand{\BOX}{\ensuremath\Box}
\newtheorem{theorem}{Theorem }[section]
\newtheorem{corollary}[theorem]{Corollary}
\newtheorem{definition}[theorem]{Definition}
{\theorembodyfont{\rmfamily}}
{\theorembodyfont{\rmfamily}}
{\theorembodyfont{\rmfamily}}
\newtheorem{lemma}[theorem]{Lemma}
\newtheorem{proposition}[theorem]{Proposition}
{\theorembodyfont{\rmfamily}\newtheorem{remark}[theorem]{Remark}}
{\theorembodyfont{\rmfamily}}
\newcommand{\C}{\mathbb{C}}
\newcommand{\R}{\mathbb{R}}
\newcommand{\Z}{\mathbb{Z}}
\newcommand{\dd}{\,{\rm d}}
\newcommand{\ep}{\varepsilon}
\newcommand{\ii}{\mathrm i}
\newcommand{\B}{\mathbb{B}}
\newcommand{\F}{\mathcal{F}}
\newcommand{\mcG}{\mathcal{G}}
\newcommand{\mcW}{\mathcal{W}}
\newcommand{\V}{\mathcal{V}}
\newcommand{\Q}{\mathcal{Q}}
\DeclareMathOperator{\spn}{span}
\def\Xint#1{\mathchoice
	{\XXint\displaystyle\textstyle{#1}}%
	{\XXint\textstyle\scriptstyle{#1}}%
	{\XXint\scriptstyle\scriptscriptstyle{#1}}%
	{\XXint\scriptscriptstyle\scriptscriptstyle{#1}}%
	\!\int}
\def\XXint#1#2#3{{\setbox0=\hbox{$#1{#2#3}{\int}$}
		\vcenter{\hbox{$#2#3$}}\kern-.5\wd0}}
\def\dashint{\Xint-}
\newenvironment{proof}{{\vskip\baselineskip\noindent\textbf{Proof:}}}%
{\hspace*{.1pt}\hspace*{\fill}\BOX\vskip\baselineskip}
\newenvironment{proofx}[1]%
{\vskip\baselineskip\noindent\textbf{Proof of {#1}:}}%
{\hspace*{.1pt}\hspace*{\fill}\BOX\vskip\baselineskip}
{\vskip\baselineskip\noindent\textbf{Proof of Theorem \protect\ref{#1}:}}%
{\hspace*{.1pt}\hspace*{\fill}\BOX\vskip\baselineskip}
{\vskip\baselineskip\noindent\textbf{Proof of Theorems \protect\ref{#1} --
		\protect\ref{#2}:}}%
{\hspace*{.1pt}\hspace*{\fill}\BOX\vskip\baselineskip}
\begin{document}

\title{Higher-order boundary layers and regularity \\ for Stokes systems over rough boundaries}

\author{Mitsuo Higaki, Jinping Zhuge}	
\date{}

\maketitle

\noindent {\bf Abstract.}\ In this paper, we study the large-scale boundary regularity for the Stokes system in periodically oscillating John domains. Our main contribution is the construction of boundary layer correctors of arbitrary order. This is a significant generalization of the known results restricted to the first and second orders. As an application, we prove the large-scale regularity estimate, as well as a Liouville theorem, of arbitrary order for the Stokes system. Our results are also related to higher-order boundary layer tails and wall laws in viscous fluids over rough boundaries.

\medskip

\noindent{\bf Keywords.}\ Boundary layer, Large-scale regularity, Stokes system, Rough boundary, Wall laws.

\medskip

\noindent{\bf 2010 MSC.} \ 76M50, 76D10, 76D07, 35B27.

\medskip

\section{Introduction}
\subsection{Motivations}
The effect of rough boundaries is an important and challenging topic in the field of hydrodynamics. It has been extensively studied experimentally, numerically and theoretically with a vast literature during the past half century. A remarkable progress has been made in deriving the wall law for viscous fluids, which is a method to replace rough boundaries artificially by flat ones and to impose the effective conditions on them. The reader is referred to the preceding works \cite{HP20,HPZ} and references therein for more detailed backgrounds, including approximation of flows via the wall laws and the investigation of turbulence.

In this paper, we focus on the local behavior of the stationary Stokes flows over periodically oscillating boundaries. There are two critical parameters for the geometry 
of a rough boundary --- thickness and structure. In applications, the thickness of rough boundaries is relatively small, compared to the characteristic scales of fluid flows. The periodicity is a mathematical simplification of realistic boundaries having self-similar structures, such as rusted or sanded metal plates, natural riverbeds, fish skins, etc.  In our situation, since the Stokes system is linear, the thickness and period of rough boundaries can be enlarged and rescaled respectively to 1 and $2\pi$ by choosing the period suitably (e.g., a multiple of the smallest period). Precisely, we assume that the domain $\Omega \subset \R^d$ as well as its boundary $\Gamma := \partial \Omega$ satisfies the following two basic assumptions: 
\begin{itemize}
    \item Thickness:
    \begin{equation}\label{cdn.thick}
        \R^d_+: = \{ (x,y)\in \R^d~|~ y >0  \} \subset \Omega \subset  \{ (x,y)\in \R^d~|~ y >-1  \};
    \end{equation}

    \item Periodicity:
    \begin{equation}\label{cdn.periodic}
        \Omega + (2\pi z,0) = \Omega \quad \text{for any } z\in \Z^{d-1},
    \end{equation}
    where $\Omega + (\tau,0) := \{ (x+\tau,y)~|~ (x,y) \in \Omega \} $ for fixed $\tau\in\R^{d-1}$.
\end{itemize}
Throughout this paper, the point in $\R^d$ will always be written as $(x,y)$ with $x = (x_1, x_2,\cdots, $ $ x_{d-1})\in \R^{d-1}$ and $y\in \R$. 
Calligraphy letters $\mathcal{X}, \mathcal{Y}, \cdots$ will also be used only when it makes the description more concise.
The condition \eqref{cdn.thick} says that the boundary thickness of $\Gamma$ is at most 1. The condition \eqref{cdn.periodic} says that the domain $\Omega$ 
is $(2\pi)$-periodic in all $(d-1)$-horizontal directions. Another essential technical geometric assumption for $\Omega$ 
will be given later.

Let $R>1$ and let $Q_R(0)\subset\R^d$ denote the cube centered at the origin with side length $2R$, i.e., $Q_R(0)=(-R,R)^d$. Set $B_{R,+} = Q_R(0)\cap \Omega$ and $\Gamma_R = Q_R(0) \cap \Gamma$. Then we consider the following Stokes system subjected to the no-slip boundary condition:
\begin{equation}\tag{S}\label{intro.S}
\left\{
\begin{array}{ll}
-\Delta u+\nabla p=0&\mbox{in}\ B_{R,+}\\
\nabla\cdot u=0&\mbox{in}\ B_{R,+}\\
u=0&\mbox{on}\ \Gamma_R,
\end{array}
\right.
\end{equation}
where $u = (u_1,\cdots, u_d)$ denotes the velocity field and $p$ denotes the pressure field. Our interest is the mesoscopic structures or large-scale behaviors of an arbitrary weak solution $(u,p)$ in $B_{r,+}$ for $1<r<R$, typically when $R$ is large. If the boundary is flat, i.e., if $\Omega$ and $\Gamma$ are respectively $\R^d_+$ and $\partial\R^d_+ := \{(x,y)\in \R^d~|~y=0\}$, the solution $(u,p)$ of \eqref{intro.S} is smooth up to the boundary $\Gamma_{R/2}$. Particularly, we have the Taylor expansion for $u$
\begin{equation}\label{eq.TaylorExp}
    u(x,y) = \sum_{\substack{|\alpha|+l \le m \\ l\ge 1} } \frac{1}{\alpha! l!} \partial_x^\alpha \partial_y^l u(0,0) x^\alpha y^l + O(|(x,y)|^{m+1}). 
\end{equation}
In the above summation, $l$ is not equal to zero by the boundary condition. Moreover, it can be verified that the polynomial on the right-hand side of \eqref{eq.TaylorExp} solves the Stokes system in $\R_+^d$. In other words, if $\Gamma$ is flat, for any $m\ge 1$, the weak solution $u$ is close to a polynomial solution of degree $m$ in $B_{r,+}$ with a higher-order remainder. A similar consideration can also be made for the pressure $p$. These properties demonstrate the local structures and $C^{m,1}$ regularity of the solutions to \eqref{intro.S} when the boundaries are flat. Now, it is natural to ask if there is an analog of \eqref{eq.TaylorExp} if $\Omega$ has a periodically oscillating boundary.

We point out that similar questions have been well-understood in elliptic homogenization with oscillating coefficients when there is no boundary. Roughly it says that the local solutions can be approximated by heterogeneous polynomial solutions involving higher-order correctors; see e.g., \cite{AL89,AKM16,AKS20} or \cite{AKM19}. Turning to our problem, however, the oscillation is imposed on the boundary instead of coefficients, which leads to a completely different situation at least at a technical level. Recently, the local large-scale regularity (lower-order) to PDEs with oscillating boundaries has been investigated in several works. The uniform Lipschitz estimate was first obtained in \cite{KP15, KP18} for elliptic equations by the compactness method in the case when the boundary is a Lipschitz graph; see also \cite{GZ20} for related work. The Lipschitz graph assumption then was removed in \cite{Z21} so that the boundaries can be arbitrary at small scales. Similar results, up to $C^{2,\mu}$ estimates, have been extended to stationary Navier-Stokes system in \cite{HP20,HPZ}, which will be discussed below.

The key to deriving the higher-order analog of \eqref{intro.S} is to introduce the so-called boundary layers, which play similar roles as the correctors in elliptic homogenization with oscillating coefficients; see e.g., \cite{AL89,AKM16,KL16,Gu17,BG19,AKS20,AKS21}. The first-order boundary layers are classical, given by the bounded solutions of the following cell problem: for $i\in\{1,2,\cdots, d\}$, 
\begin{equation}\label{intro.eq.1st.BL}
\left\{
\begin{array}{ll}
-\Delta v_{(i)}+\nabla q_{(i)}=0&\mbox{in}\ \Omega \\
\nabla\cdot v_{(i)}=0&\mbox{in}\ \Omega \\
v_{(i)}(x,y) + y\mathbf{e}_i = 0 &\mbox{on}\ \Gamma. 
\end{array}
\right.
\end{equation}
Historically, the first-order boundary layer was studied to derive the Navier wall law rigorously, which had been known to be effective empirically. The mathematical analysis was pioneered by \cite{AMPV97,APV98,JM01,JM03}. The extensions have been given to the case of more general boundaries \cite{BG08,M09,G09,GM10,DG11,BDG12} and to the nonstationary case \cite{MNN13,H16}. On the other hand, the connection to the regularity theory for the stationary Navier-Stokes system was investigated by \cite{HP20}. Indeed, the local large-scale $C^{1,\mu}$ regularity has been obtained in \cite{HP20} and \cite{HPZ}, namely that for a Navier-Stokes flow $u$ in $B_{r,+}$, there exists a constant $c_i$ such that
\begin{equation}\label{est.1st}
    \Big\| \nabla u 
    - \nabla \sum_{i=1}^{d-1} c_i (y\mathbf{e}_i +v_{(i)}) \Big\|_{\ul{L}^2(B_{r,+})} 
    \le C r^{\mu}, \quad r\in(1,R). 
\end{equation}
The above estimate is called a large-scale $C^{1,\mu}$ estimate since the radius is restricted to $r>1$. It in general does not hold for $r\le 1$ if the boundary has no structure such as smoothness at scales less than $1$. This is a natural phenomenon in homogenization theory.

More recently in \cite{HPZ}, $C^{2,\mu}$ regularity was firstly proved for the stationary Navier-Stokes system with the help of the second-order boundary layers. The proof relies on the periodicity assumption on the boundary. We emphasize that, in contrast, the first-order boundary layers can be constructed without periodicity. A typical second-order boundary layer is a weak solution to the Stokes system in $\Omega$ subjected to the boundary condition
\begin{equation*}
    v(x,y) + x_j y \mathbf{e}_i = 0 \quad \text{on } \Gamma, 
\end{equation*}
where $j\in\{1,2,\cdots,d-1\}$. The difficulty in solving this cell problem is that the boundary value has linear growth when $x_j\to \infty$. As a result, unlike the first-order case, one cannot rely on the Dirichlet-Neumman map \cite{GM10, DP14, DG17, KP18, HP20} or the Green function \cite{HPZ}, which are useful for the solutions with finite Dirichlet energy. The key observation in \cite{HPZ} is that 
\begin{equation*}
    v(x,y) - x_j v_{(i)}(x,y) = 0 \quad \text{on } \Gamma 
\end{equation*}
holds, where $v_{(i)}$ is the first-order boundary layer given by \eqref{intro.eq.1st.BL}. Hence if we introduce a new unknown function $V := v - x_j v_{(i)}$, then it satisfies the no-slip condition on $\Gamma$ and a Stokes system with periodic ``force terms'' involving $v_{(i)}$. This transformation allows one to find a solution $V$ on a periodic cell, which is an infinite cylinder in our setting. When solving the problem, we need to take advantage of a good quantitative convergence of $v_{(i)}$ when $y\to\infty$, which essentially requires a periodic structure of the boundary.

As indicated, the boundary layers are correctors that adjust to the oscillating boundaries and are crucial to understanding the local behavior of the solutions to \eqref{intro.S}. In this paper, we will construct the boundary layers of arbitrary order by recognizing their algebraic structures and recurrence relations. As far as the authors know, this is a new result even in smooth domains. The boundary layers will be used to derive a higher-order version of \eqref{est.1st} analogous to the classical estimate \eqref{eq.TaylorExp}, which directly leads to the Liouville theorem for the homogeneous Stokes system on $\Omega$. We also introduce a notion of the effective Stoke polynomials and provide the higher-order wall laws in a local sense. This new class of boundary conditions contains the Navier wall law in \cite{JM01,JM03,G09,GM10} as the first-order case.

\subsection{Assumptions and statements of results}
In the analysis of \eqref{intro.S}, other than the assumptions \eqref{cdn.thick} and \eqref{cdn.periodic}, we still need to assume a technical geometric condition on $\Omega$ as in \cite{HPZ}. This is recalled in Definition \ref{def.John2} below.

%
\begin{definition}[\cite{Jo61,MS79}]\label{def.John}
{\rm 
Let $D \subset \R^d$ be an open bounded set and $\tilde{\mathcal{X}} \in D$. We say that $D$ is a (bounded) {\it John domain} with respect to $\tilde{\mathcal{X}}$ and with constant $L$ if for any $\mathcal{Y}\in D$, there exists a Lipschitz mapping $\rho:[0,|\mathcal{Y}-\tilde{\mathcal{X}}|] \to D$ with Lipschitz constant $L\in (0,\infty)$, such that $\rho(0) = \mathcal{Y}, \rho(|\mathcal{Y}-\tilde{\mathcal{X}}|) = \tilde{\mathcal{X}}$ and ${\rm dist}\,(\rho(t),\partial D) \ge t/L$ for all $t\in [0, |\mathcal{Y}-\tilde{\mathcal{X}}|]$.
}
\end{definition}
%

The boundary of a John domains allows for fractals and inward cusps. Lipschitz domains, NTA domains, domains with inward cusps or certain fractals such as Koch's snowflake are John domains. Domains with outward cusps are not.

%
\begin{definition}[\cite{HPZ}]\label{def.John2}
{\rm 
Let $\Omega \subset \R^d$ be an open set satisfying \eqref{cdn.thick}. 

\begin{enumerate}[(i)]

\item We say that $\Omega$ is a {\it bumpy John domain} with constants $(L,K)$ if for any $\mathcal{X} \in \{(x,0) \in \R^d \}$ and any $R\ge 1$, there exists a bounded John domain $\Omega_R(\mathcal{X})$ with respect to $\mathcal{X}_R = \mathcal{X}+R \mathbf{e}_d$ and with constant $L\in(0,\infty)$ according to Definition \ref{def.John} such that
\begin{equation}\label{e.exisinterdom}
B_{R,+}(\mathcal{X}) \subset \Omega_R(\mathcal{X}) \subset B_{2R,+}(\mathcal{X}).
\end{equation}

\item We say that $\Omega$ is a {\it periodic bumpy John domain} with constants $(L,K)$ if $\Omega$ is a bumpy John domain with constants $(L,K)$ and satisfies the condition \eqref{cdn.periodic}.
\end{enumerate}
}
\end{definition}
%

It is easily seen that the constants of bumpy John domains are rescaling- and translation-invariant. We will say that a constant $C$ depends on $\Omega$ if it depends on the constant $(L,K)$ in Definition \ref{def.John2} as well as the dimension $d$. A typical example of the periodic bumpy John domains is one with a porous boundary (see Figure \ref{fig.1}).

%
\begin{figure}[h]
	\begin{center}
		\includegraphics[scale =0.18]{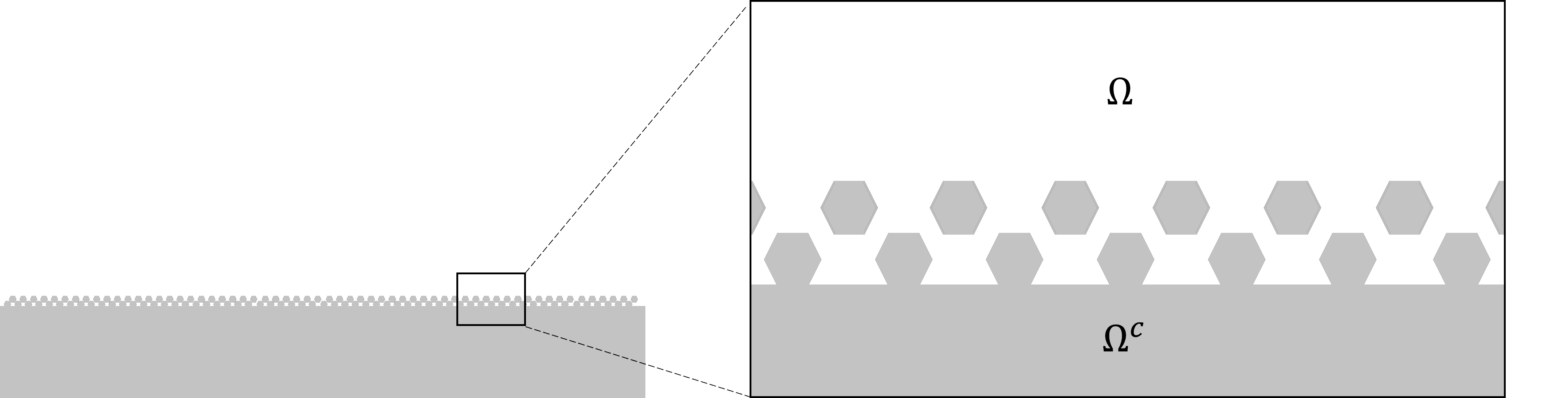}
	\end{center}
	\caption{An example of periodic bumpy John domains with a porous boundary}\label{fig.1}
\end{figure}
%

The John domain assumption can be thought of as the minimum boundary requirement when dealing with the stationary Stokes system. Indeed, it is proved in \cite{ADM06} that, under suitable conditions, the (bounded) domain in which a right-inverses of the divergence operator exists must be a John domain. The right-inverse of the divergence, also known as the Bogovskii operator, plays an important role for estimating the pressure. Let us mention that Definition \ref{def.John2} was first introduced in \cite{HPZ}, where the first-order boundary layers were constructed over non-periodic boundaries. The proof uses the Green function and its properties following from the a priori large-scale Lipschitz regularity for \eqref{intro.S}. These enable one to establish a local energy estimate of the pressure in bumpy John domains. As is discussed in \cite{HPZ}, it is unclear whether the method in \cite{GM10, DP14, DG17, KP18, HP20} applies in this case. A major source of difficulty is that the boundary is not necessarily represented by a graph.

Our main result of this paper is the existence and the corresponding estimates of the boundary layers of arbitrary order. Let $\mathcal{P}_m(\R^d)$ be the set of all real-coefficient polynomials of at most degree $m$ and set 
$$\mathcal{P}_{m,0}(\R^d_+) = \{ P\in \mathcal{P}_m(\R^d)~|~ P(x,0) = 0 \}\subset \mathcal{P}_m(\R^d).$$

\begin{theorem}\label{intro.thm.bl}
Let $\Omega$ be a periodic bumpy John domain. For all $m\in\Z_{+}$, there exists a linear map $\mathscr{S}[\cdot]: \mathcal{P}_{m,0}(\R^d_+)^d \rightarrow H^1_{\rm loc}(\overline{\Omega})^d\times L^2_{\rm loc}(\overline{\Omega})$ which maps $P \in \mathcal{P}_{m,0}(\R^d_+)^d$ to the pair $\mathscr{S}[P]=(v,q)$ which is a weak solution to  
\begin{equation}\label{eq1.intro.thm.bl}
\left\{
\begin{array}{ll}
-\Delta v + \nabla q=0&\mbox{in}\ \Omega \\
\nabla\cdot v=0&\mbox{in}\ \Omega \\
v + P = 0 &\mbox{on}\ \Gamma. 
\end{array}
\right.
\end{equation}
Moreover, the growth rate of $v$ at spatial infinity is strictly less than that of $P$.
\end{theorem}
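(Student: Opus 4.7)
The plan is to argue by strong induction on $m$. The base case $m=1$ is classical: $\mathcal{P}_{1,0}(\R^d_+)^d$ is spanned by $\{y\mathbf{e}_i\}_{i=1}^{d}$, and the first-order correctors $v_{(i)}$ solving \eqref{intro.eq.1st.BL} in a periodic bumpy John domain---together with their boundedness (growth rate $0$)---are supplied by \cite{HPZ}. For the inductive step, fix $P\in\mathcal{P}_{m,0}(\R^d_+)^d$ of degree exactly $m$ and split
\begin{equation*}
P(x,y)=P_0(y)+\sum_{j=1}^{d-1}x_j Q_j(x,y),\qquad P_0(y):=P(0,y),
\end{equation*}
where a fixed linear rule---for example, assigning each horizontal monomial $x^\alpha y^k$ with $|\alpha|\ge 1$ to the index $j=\min\{i:\alpha_i\ge 1\}$---produces $Q_j\in\mathcal{P}_{m-1,0}(\R^d_+)^d$ and makes the splitting linear in $P$. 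Note $(P-P_0)(x,0)=0$ since $P\in\mathcal{P}_{m,0}$ and $P_0(0)=P(0,0)=0$, so each $Q_j$ indeed vanishes at $y=0$.

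For the vertical piece, $P_0$ depends only on $y$, and since $\Gamma\subset\{-1<y\le 0\}$ by \eqref{cdn.thick} the Dirichlet trace $-P_0|_\Gamma$ is bounded and $(2\pi)$-periodic in $x$; repeating the construction of \eqref{intro.eq.1st.BL} from \cite{HPZ} yields a bounded $x$-periodic weak solution $(v_0,q_0)$ with $v_0+P_0=0$ on $\Gamma$. For the horizontal piece, invoke the induction hypothesis to set $(v_j,q_j):=\mathscr{S}[Q_j]$ and form the ansatz
\begin{equation*}
\tilde v:=\sum_{j=1}^{d-1}x_j v_j,\qquad \tilde q:=\sum_{j=1}^{d-1}x_j q_j,
\end{equation*}
which satisfies $v_0+\tilde v+P=0$ on $\Gamma$ because $v_j=-Q_j$ there. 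A direct calculation using $-\Delta v_j+\nabla q_j=0$ and $\nabla\cdot v_j=0$ yields the residuals
\begin{equation*}
-\Delta\tilde v+\nabla\tilde q=-2\sum_{j}\partial_{x_j}v_j+\sum_{j}q_j\mathbf{e}_j=:F,\qquad \nabla\cdot\tilde v=\sum_{j}\mathbf{e}_j\cdot v_j=:g.
\end{equation*}
It then remains to solve $-\Delta W+\nabla Q'=-F$, $\nabla\cdot W=-g$ in $\Omega$ with $W=0$ on $\Gamma$, after which $\mathscr{S}[P]:=(v_0+\tilde v+W,\,q_0+\tilde q+Q')$ closes the induction.

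The main obstacle is producing $W$ with growth strictly less than $m$ on the unbounded periodic bumpy John domain when $F$ and $g$ may themselves grow polynomially in $x$. By the induction hypothesis, $v_j$ and $q_j$ have growth strictly less than $m-1$, so I would first absorb the divergence constraint $\nabla\cdot W=-g$ by a Bogovskii correction on each period cell, and then solve the resulting divergence-free forced Stokes problem in expanding truncations of $\Omega$, passing to the limit via the large-scale Lipschitz estimate and local pressure estimate of \cite{HPZ}. The delicate ingredients are: a quantitative pressure estimate valid on John domains (where no graph hypothesis is available); a recursive decomposition of the lower-order correctors into $(2\pi)$-periodic pieces modulated by polynomials in $x$, so that $F$ and $g$ inherit an analogous structure; and the propagation of a strict loss of one degree at every inductive step, which should ultimately be tied to the $x$-independence of the boundary layer tails at $y=\infty$ (the higher-order analogue of the Navier constants). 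Linearity of $\mathscr{S}$ is then inherited from the linearity of each step once an accompanying Liouville-type uniqueness at the corresponding growth rate is in place, which I would expect to be established in tandem with the existence.
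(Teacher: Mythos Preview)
Your inductive strategy---peel off one factor of $x_j$, apply the lower-order corrector, and then repair the residuals---is in the same spirit as the paper's ansatz~\eqref{intro.ansatz}, which expands $v^\alpha=\sum_{\beta\le\alpha}\binom{\alpha}{\beta}x^{\alpha-\beta}V^\beta$ with \emph{periodic} building blocks $(V^\beta,Q^\beta)$ and inducts on $|\beta|$. The two schemes are formally equivalent after unfolding your recursion; the paper's bookkeeping is cleaner because its building blocks are periodic in $x$ (plus a polynomial in $y$), whereas yours are already heterogeneous, which forces you to carry an extra correction $W$ at every level.

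The substantive gap is in that correction step. You propose to absorb $\nabla\cdot W=-g$ by ``a Bogovskii correction on each period cell'' and then pass to the limit in expanding truncations. But $g=\sum_j\mathbf{e}_j\cdot v_j$ inherits from $v_j$ a nontrivial polynomial part (in both $x$ and $y$), and in particular need not have zero mean on any period cell: already at the first nontrivial level the paper observes (just after \eqref{intro.eq.bl.beta}) that $\nabla\cdot V^{\mathbf{e}_1}=-V^0_1$ converges to a nonzero constant as $y\to\infty$ whenever $\Omega\neq\R^d_+$. Bogovskii on $\Omega_{\rm p}$ is therefore inapplicable, and any periodic-in-$x$ ansatz for $W$ is inconsistent; the corrector is forced to grow polynomially in $y$. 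Your truncation-plus-large-scale-Lipschitz argument does not address this, since the a~priori estimate of \cite{HPZ} is for the homogeneous system and gives no control on a forced problem with data growing like $|(x,y)|^{m-2}$.

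What the paper does instead is exactly the ``recursive decomposition'' you allude to but do not carry out: each $(V^\beta,Q^\beta)$ is split as $V^\beta_{\rm poly}(y)+V^\beta_{\rm per}(x,y)$, where the polynomial-in-$y$ part is constructed \emph{explicitly} by antidifferentiation (Steps~1--2 of the proof of Proposition~\ref{prop.bl.beta}), designed so that the remaining source is periodic with zero mean and exponentially decaying Fourier modes. Only then does Lax--Milgram on the periodic strip $\Omega_{\rm p}$ apply (Step~3), with Lemma~\ref{prop.bl.F} giving the sharp Fourier estimates needed to close the induction. Your outline is missing precisely this explicit polynomial bookkeeping; without it the forced problem for $W$ is no easier than the original. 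Finally, linearity of $\mathscr{S}$ does not require a Liouville theorem: the paper simply fixes one solution per monomial and extends by linearity.
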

%

The above theorem is a straightforward corollary of Proposition \ref{thm.bl.alpha} which also contains several basic estimates (details are omitted here for simplicity). Theorem \ref{intro.thm.bl} recovers the classical first-order boundary layers if $m=1$ and the second-order boundary layers in \cite{HPZ} if $m=2$. The structure of $\mathscr{S}[P]$, nontrivially mixing polynomials and periodic functions, can be seen in Subsection \ref{subsec.intro.key}; see also Proposition \ref{prop.S.formula} for an intrinsic formula for $\mathscr{S}[P]$.

As an important application, using the boundary layer correctors in Theorem \ref{intro.thm.bl}, one can show the higher-order regularity for the weak solutions of \eqref{intro.S}. To state the result, we introduce some function spaces in a convenient manner. The precise definitions and their properties are given in Section \ref{sec.stokespoly} and Subsection \ref{subsec.het.stokespoly}. Let $\mathcal{S}_m(\R^d_+)$ be the space of Stokes polynomials $(P,Q)$ in $\R^d_+$, that is the set of polynomial solutions of degree at most $m$ to the Stokes equations imposed on $\R_+^d$. Moreover, let $\mathcal{S}_m(\Omega)$ be the space of heterogeneous Stoke polynomials in $\Omega$: 
\begin{equation}
\mathcal{S}_m(\Omega) 
= \{ (w,\pi) = (P,Q) + \mathscr{S}[P]~|~ (P,Q) \in\mathcal{S}_m(\R^d_+) \}.
\end{equation}
Notice that the boundary layer $\mathscr{S}[P]$ corrects the trace of the polynomial $P$ on $\Gamma$. Hence $(w,\pi)$ is a solution of the Stokes system in $\Omega$ having the same growth rate as $(P,Q)$.

Now we can state the optimal higher-order regularity for \eqref{intro.S} at large scales. 
%
\begin{theorem}\label{thm.main}
Let $\Omega$ be a periodic bumpy John domain. For all $R>1$ and $m\in\Z_{+}$, if $(u, p) \in H^1(B_{R,+})^d \times L^2(B_{R,+})$ is a weak solution of \eqref{intro.S}, then there exist  $(w,\pi)\in\mathcal{S}_m(\Omega)$ and a constant $c_p$ such that, for all $r\in (1, R/2)$, 
\begin{equation}\label{est.mainRegularity}
\begin{aligned}
\|\nabla u- \nabla w\|_{\ul{L}^2(B_{r,+})}
+ \|p-\pi-c_p\|_{\ul{L}^2(B_{r,+})}
\le 
C\Big(\frac{r}{R}\Big)^{m}
\|\nabla u\|_{\ul{L}^2(B_{R,+})}. 
\end{aligned}
\end{equation}
The constant $C$ is independent of $R$ and $r$, while depends on $\Omega$ and $m$.
\end{theorem}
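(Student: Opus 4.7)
The plan is to run a Campanato-type excess decay iteration, made possible at order $m$ by the higher-order boundary layers of Theorem \ref{intro.thm.bl}. For $r \in (1, R/2)$, define the $m$-th order boundary excess
\begin{equation*}
H(r) := \inf_{(w,\pi)\in\mathcal{S}_m(\Omega),\, c\in\R} \Big( \|\nabla u - \nabla w\|_{\underline{L}^2(B_{r,+})} + \|p - \pi - c\|_{\underline{L}^2(B_{r,+})} \Big).
\end{equation*}
The goal reduces to a one-step improvement $H(\theta r) \le \theta^m H(r)$ on all scales $r \in [C_0, R/2]$, for some $\theta \in (0, 1/4)$ and $C_0 \ge 1$ depending only on $\Omega$ and $m$. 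Iterating this along a geometric sequence of scales produces the desired $(r/R)^m$ decay, while a Cauchy-type argument in the finite-dimensional space $\mathcal{S}_m(\Omega)$ identifies a single $(w, \pi) \in \mathcal{S}_m(\Omega)$ and constant $c_p$ witnessing \eqref{est.mainRegularity}.

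To prove the one-step improvement, I would fix a near-minimizer $(w_0, \pi_0, c_0)$ for $H(r)$ and study $(\tilde u, \tilde p) := (u - w_0, p - \pi_0 - c_0)$, which still solves \eqref{intro.S} in $B_{r,+}$. The core step is a two-scale comparison of $\tilde u$ on the interior slab $B_{r/2,+} \cap \{y > 1\}$ with the solution $\bar u$ of the flat-boundary Stokes system in $B_{r/2} \cap \R^d_+$ that matches $\tilde u$ on $\{y = 1\}$ and vanishes on $\partial \R^d_+$; by Theorem \ref{intro.thm.bl} the tails of all correctors $\mathscr{S}[P']$ grow strictly slower than their polynomial data, so this comparison costs only a lower-order error in $r$. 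Since $\bar u$ is smooth up to $\partial \R^d_+$, its Taylor expansion \eqref{eq.TaylorExp} at the origin furnishes $(P, Q) \in \mathcal{S}_m(\R^d_+)$ approximating $\bar u$ on $B_{\theta r, +}$ with $O(\theta^{m+1})$ remainder. Writing $\mathscr{S}[P] = (v_P, q_P)$, the pair $(P + v_P, Q + q_P)$ lies in $\mathcal{S}_m(\Omega)$ and, added back to $(w_0, \pi_0)$, produces a new approximant realizing $H(\theta r) \le C \theta^{m+1} H(r)$; choosing $\theta$ small absorbs the constant $C$ into $\theta^m$.

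The main obstacle is the quantitative two-scale comparison at order $m$, particularly at the level of the pressure. The lower-order work in \cite{HP20, HPZ} required only first- or second-order boundary layer tail estimates; here one needs sharp decay bounds for \emph{every} corrector $\mathscr{S}[P']$ with $P' \in \mathcal{P}_{m,0}(\R^d_+)^d$ together with matching pressure bounds. These quantitative estimates are precisely what Theorem \ref{intro.thm.bl} and its companion Proposition \ref{thm.bl.alpha} supply, while the additive constant $c_p$ in \eqref{est.mainRegularity} reflects the standard Bogovskii/Poincar\'e normalization of the pressure on the John subdomains developed in \cite{HPZ}. Granted these inputs, the Campanato iteration and the identification of a single limiting $(w, \pi, c_p)$ become routine.
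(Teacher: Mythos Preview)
Your overall strategy---Campanato excess decay via comparison with a flat half-space solution, Taylor expansion, and correction by boundary layers---is exactly the route the paper takes in Section~\ref{sec.largescale}. Two points of divergence are worth flagging.

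First, there is a genuine gap in your claimed one-step improvement. Your excess $H(r)$ is measured at the level of $\nabla u$, not $u$ itself. The degree-$m$ Taylor polynomial of $\bar u$ approximates $\bar u$ with remainder $O((\theta r)^{m+1}/r^{m+1})$, but its \emph{gradient} approximates $\nabla \bar u$ only with remainder $O(\theta^m)$, since one derivative is lost. So the honest one-step estimate is
\[
H(\theta r) \le \big(C_m\theta^m + C_{m,\theta}\,r^{-1/2}\big)\,H(r),
\]
the $r^{-1/2}$ term coming from the comparison with the flat solution and the boundary-layer correction (Proposition~\ref{prop.Dwp-Dp}). Iterating this, after absorbing the constant $C_m$, yields only $(r/R)^{m-\mu}$ for any $\mu\in(0,1)$, not the sharp $(r/R)^m$. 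This is precisely what the paper obtains in Proposition~\ref{thm.Hr}. To reach the sharp exponent, the paper applies the suboptimal estimate at order $m+1$ to produce an approximant in $\mathcal{S}_{m+1}(\Omega)$ with error $(r/R)^{m+1-\mu}$, and then \emph{projects} this approximant back into $\mathcal{S}_m(\Omega)$; the projection error, controlled via the growth lemma (Lemma~\ref{lem.growth}), is exactly of order $(r/R)^m$, which dominates. Your proposal is missing this step.

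Second, a structural difference: the paper defines the excess using only the velocity gradient, and recovers the pressure estimate \emph{a posteriori} from the velocity via the Bogovskii operator on John subdomains (Subsection~\ref{subsec.pressure}). This avoids carrying the pressure through the iteration and sidesteps the issue of pressure normalization at each scale. Your inclusion of the pressure in $H(r)$ from the outset is workable but requires more care in the one-step comparison, particularly since the flat approximation $\bar u$ comes with its own pressure $\bar p$ whose comparison with $\tilde p$ needs the same John-domain machinery anyway.
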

%

%
\begin{remark}\label{rem.thm.main}
One can obtain the estimate of $u$ itself if the Caccioppoli inequality holds for \eqref{intro.S} in $B_{R,+}$. If it does, from \eqref{est.mainRegularity} combined with the Poincar\'{e} inequality, we find
\begin{equation*}
\begin{aligned}
\|u- w\|_{\ul{L}^2(B_{r,+})}
\le 
C\Big(\frac{r}{R}\Big)^{m+1}
\|u\|_{\ul{L}^2(B_{R,+})}, \quad r\in (1, R/4). 
\end{aligned}
\end{equation*}
However, the validity of the usual Caccioppoli inequality in bumpy John domains seems to be a delicate question of independent interest. We refer to the Appendix A of \cite{HPZ}. 
\end{remark}
%

%
\begin{remark}
With the estimates of boundary layers in Proposition \ref{thm.bl.alpha}, the proof for Theorem \ref{thm.main} is more or less standard via a quantitative Campanato iteration originating from \cite{AS16} (also see \cite{GNO20,Shen17} or recent monographs \cite{Shen18,AKM19}). The application of this method to Stokes systems or linear elasticity involving the pressure can be found in \cite{GZ19, GZ20, HPZ}.
\end{remark}
%

Theorem \ref{thm.main} is essentially a large-scale version of \eqref{eq.TaylorExp} with the Stokes polynomial replaced by the heterogeneous Stokes polynomial from $\mathcal{S}_m(\Omega)$. This result can be compared to the large-scale $C^{m,1}$ regularity in elliptic homogenization with oscillating coefficients (e.g., \cite{AKM16, AKS20}).
Theorem \ref{thm.main} also implies the Liouville theorem of arbitrary order for the homogeneous Stokes system imposed on the whole domain $\Omega$. This basically says that $\mathcal{S}_m(\Omega)$ contains all the solutions in $\Omega$ whose velocity gradients grow at most like $o(|(x,y)|^{m})$ and has the same dimension as $\mathcal{S}_m(\R^d_+)$; see Subsections \ref{subsec.het.stokespoly} and \ref{subsec.liouville} for details.

Note that Theorem \ref{thm.main} captures the structure of the solutions near the boundary. Then it is natural to ask what happens to a local solution away from the boundary. In fact, the singularity only appears near the boundary and homogenization should be expected to take place far away from the boundary. Actually, it is well-known that the first-order boundary layers converge exponentially to constant values as $y\to \infty$. These limits at $y=\infty$ are called the boundary layer tails. This motivates us to define the boundary layer tails for higher-order boundary layers, and therefore to define the effective Stokes polynomials in this paper. Indeed, we can see from Proposition \ref{thm.bl.alpha} that the boundary layer $\mathscr{S}[P]$ given in Theorem \ref{intro.thm.bl} converges exponentially to a Stokes polynomial as $y\to \infty$. Consequently, the element in $\mathcal{S}_m(\Omega)$, which is a special solution, also converges exponentially to a Stokes polynomial. Thus, we may define the space of effective Stokes polynomials as  
\begin{equation*}
\begin{aligned}
\mathcal{S}_m^{\rm eff}(\Omega) 
= & \big\{(w_{\rm poly},\pi_{\rm poly}) \in \mathcal{P}_m(\R^d)^d \times \mathcal{P}_{m-1}(\R^d)~|~\\
& \qquad \lim_{y\to +\infty} ((w,\pi)-(w_{\rm poly},\pi_{\rm poly}))=0, \ \ 
(w,\pi)\in \mathcal{S}_m(\Omega) \big\}.
\end{aligned}
\end{equation*}

The following theorem shows that the behavior of the local solutions, slightly away from the boundary, may be pointwise captured by the effective Stokes polynomials in $\mathcal{S}_m^{\rm eff}(\Omega)$, instead of the heterogeneous Stokes polynomials in $\mathcal{S}_m(\Omega)$.

%
\begin{theorem}\label{thm.pointwise}
Under the same assumptions as in Theorem \ref{thm.main}, if $R$ is sufficiently large depending on $m$, then there exist $(w_{\rm poly}, \pi_{\rm poly}) \in \mathcal{S}_m^{\rm eff}(\Omega)$ and a constant $c_p$ such that for all $(x,y) \in B_{R/2,+}$ with $y\ge 4$,
\begin{equation}\label{est.AporByEff}
\begin{aligned}
& |\nabla u(x,y) - \nabla w_{\rm poly}(x,y)| + |p(x,y) - \pi_{\rm poly}(x,y) - c_p| \\ & \le C \Big\{  \Big(\frac{r}{R}\Big)^m +  (1+|x|)^{m-1} e^{-y/2}  \Big\} \| \nabla u \|_{\ul{L}^2(B_{R,+})},
\end{aligned}
\end{equation}
and
\begin{equation}\label{est.AporByEff.1}
|u(x,y) - w_{\rm poly}(x,y)| \le C \Big\{  y \Big(\frac{r}{R}\Big)^m +  (1+|x|)^{m-1} e^{-y/2}  \Big\} \| \nabla u \|_{\ul{L}^2(B_{R,+})},
\end{equation}
where $r = |(x,y)|$ and $C$ depends only on $m$ and $\Omega$.
\end{theorem}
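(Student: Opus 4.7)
The plan is to combine the $\underline{L}^2$ Campanato estimate from Theorem \ref{thm.main} with interior Stokes regularity---which is available since $y\ge 4$ places the evaluation point safely inside $\mathbb{R}^d_+$---and to isolate the exponentially decaying part of the boundary layer afforded by Proposition \ref{thm.bl.alpha}. Applying Theorem \ref{thm.main}, I obtain $(w,\pi)=(P,Q)+\mathscr{S}[P]\in\mathcal{S}_m(\Omega)$ and a constant $c_p$. From the quantitative description of $\mathscr{S}[P]$ at spatial infinity (Proposition \ref{thm.bl.alpha}), one may split $\mathscr{S}[P]=\mathscr{S}[P]_{\mathrm{poly}}+\mathscr{S}[P]_{\mathrm{exp}}$, where $\mathscr{S}[P]_{\mathrm{poly}}$ is a Stokes polynomial and the error satisfies $|\mathscr{S}[P]_{\mathrm{exp}}(x,y)|+|\nabla\mathscr{S}[P]_{\mathrm{exp}}(x,y)|\le C(1+|x|)^{m-1}e^{-y/2}\,\|P\|$, together with the analogous estimate for the associated pressure. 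Setting $(w_{\mathrm{poly}},\pi_{\mathrm{poly}}):=(P,Q)+\mathscr{S}[P]_{\mathrm{poly}}\in\mathcal{S}_m^{\mathrm{eff}}(\Omega)$, the coefficient norm $\|P\|$ is controlled by $M:=\|\nabla u\|_{\underline{L}^2(B_{R,+})}$ via \eqref{est.mainRegularity} at the fixed scale $r=1$ combined with finite-dimensional norm equivalence on $\mathcal{S}_m(\mathbb{R}^d_+)$.

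Decompose $\nabla u-\nabla w_{\mathrm{poly}}=\nabla(u-w)+\nabla\mathscr{S}[P]_{\mathrm{exp}}$ and similarly for the pressure. The second term contributes precisely the $C(1+|x|)^{m-1}e^{-y/2}M$ contribution in \eqref{est.AporByEff}. To estimate $\nabla(u-w)$ pointwise, note that for $y\ge 4$ the Euclidean ball $\tilde B_{y/2}((x,y))$ lies inside $\mathbb{R}^d_+\subset\Omega$, so $(u-w,p-\pi-c_p)$ is a classical Stokes solution there and interior regularity yields $|\nabla(u-w)(x,y)|\le C\,\|\nabla(u-w)\|_{\underline{L}^2(\tilde B_{y/4}((x,y)))}$. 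The main obstacle is that enlarging this small-ball average to the average on $B_{r,+}(0)$ with $r=|(x,y)|$ incurs a volume ratio $(r/y)^{d/2}$ that is harmless when $y\sim r$ but blows up once $|x|\gg y$.

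I circumvent the obstacle by exploiting the $(2\pi)$-periodicity of $\Omega$. Choose $k\in\mathbb{Z}^{d-1}$ with $|x-2\pi k|_\infty\le\pi$ and apply Theorem \ref{thm.main} to the translated solution $\tilde u:=u(\cdot+(2\pi k,0),\cdot)$, which is defined on $B_{R',+}(0)$ for some $R'\sim R$. Translation-covariance of $\mathscr{S}$ inherited from the periodicity of $\Omega$ allows me to take the heterogeneous Stokes polynomial associated to $\tilde u$ as the translate $(\tilde w,\tilde\pi):=(w(\cdot+(2\pi k,0),\cdot),\pi(\cdot+(2\pi k,0),\cdot))$ (up to adjusting the free pressure constant). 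At the shifted point $(x-2\pi k,y)$ the effective distance to the origin is now comparable to $y$, since $|(x-2\pi k,y)|\le\pi\sqrt{d-1}+y\le Cy$. Hence \eqref{est.mainRegularity} applied to $\tilde u$ at radius $2y$, followed by interior regularity on $\tilde B_{y/4}((x-2\pi k,y))$, gives $|\nabla\tilde u-\nabla\tilde w|(x-2\pi k,y)\le C(y/R)^m M$. Translating back and using $y\le r$ yields $|\nabla(u-w)(x,y)|\le C(r/R)^m M$ with the matching bound for $|p-\pi-c_p|$, proving \eqref{est.AporByEff}.

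For \eqref{est.AporByEff.1}, I run the same shift-and-interior strategy but insert a Poincar\'e inequality. Since $u-w$ vanishes on $\Gamma$, and the bumpy John domain assumption (Definition \ref{def.John2}) guarantees that Poincar\'e holds on $B_{2y,+}$ in the shifted coordinates, one has $\|u-w\|_{\underline{L}^2(B_{2y,+})}\le Cy\,\|\nabla(u-w)\|_{\underline{L}^2(B_{2y,+})}\le Cy(y/R)^m M$, and the interior Stokes estimate upgrades this to $|u(x,y)-w(x,y)|\le Cy(r/R)^m M$. Adding the exponential bound $|\mathscr{S}[P]_{\mathrm{exp}}(x,y)|\le C(1+|x|)^{m-1}e^{-y/2}M$ completes \eqref{est.AporByEff.1}. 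The crux of the proof is the translation step: without the $(2\pi)$-periodicity of $\Omega$ there is no way to remove the spurious volume factor $(r/y)^{d/2}$ when $|x|\gg y$, so periodicity is structurally essential---not merely a technical convenience---in this pointwise refinement of Theorem \ref{thm.main}.
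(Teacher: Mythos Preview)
Your translation step contains a genuine gap. Translation-covariance of $\mathscr{S}$ only guarantees that the shift $(\tilde w,\tilde\pi)$ of the original $(w,\pi)$ lies in $\mathcal{S}_m(\Omega)$; it does \emph{not} imply that Theorem~\ref{thm.main}, applied to $\tilde u$, produces this particular element. The approximant in Theorem~\ref{thm.main} is built by minimising the excess over $B_{\rho,+}$ centred at the origin, and after unwinding the change of variables this corresponds, for $\tilde u$, to minimising the excess of $u$ over $B_{\rho,+}(2\pi k,0)$ rather than $B_{\rho,+}(0)$. Hence Theorem~\ref{thm.main} for $\tilde u$ yields some other $(\tilde w^{*},\tilde\pi^{*})$, and you have no control on $\|\nabla(\tilde u-\tilde w)\|_{\ul{L}^2(B_{2y,+})}$ for \emph{your} $\tilde w$. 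Since the statement requires a single $(w_{\rm poly},\pi_{\rm poly})$ valid for every $(x,y)$, the argument does not close.

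The paper avoids the whole issue by observing that $(u-w,\,p-\pi-c_p)$ is itself a solution of \eqref{intro.S} in $B_{R,+}$ and invoking the large-scale Lipschitz estimate \eqref{est.StokesLip}, which is translation-invariant and therefore applies centred at $(x,0)$:
\[
\bigg(\dashint_{B_{2y,+}(x,0)}|\nabla(u-w)|^2\bigg)^{1/2}
\le C\bigg(\dashint_{B_{2r,+}(x,0)}|\nabla(u-w)|^2\bigg)^{1/2}
\le C\bigg(\dashint_{B_{3r,+}}|\nabla(u-w)|^2\bigg)^{1/2}
\le C\Big(\frac{r}{R}\Big)^m M,
\]
which passes from scale $y$ to scale $r$ with no volume loss; interior regularity on $Q_{y/2}(x,y)$ then gives the pointwise bound, and the pressure is handled the same way. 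Since \eqref{est.StokesLip} was established in \cite{HPZ} for bumpy John domains \emph{without} periodicity, your claim that periodicity is ``structurally essential'' to this pointwise refinement is misplaced: periodicity enters upstream in constructing the higher-order boundary layers and hence $\mathcal{S}_m(\Omega)$, but not in the passage from \eqref{est.mainRegularity} to \eqref{est.AporByEff}--\eqref{est.AporByEff.1}.
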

%

The estimate \eqref{est.AporByEff} shows that if $y\ge 2m\ln(R)$, then the solution of \eqref{intro.S} can be approximated equally well by the effective Stokes polynomials as the approximation in \eqref{est.mainRegularity} by the heterogeneous Stokes polynomials. Moreover, from the proof, which is a combination of Theorem \ref{thm.main} and Proposition \ref{thm.bl.alpha}, one can see that the effective Stokes polynomial $(w_{\rm poly}, \pi_{\rm poly})$ in Theorem \ref{thm.pointwise} is exactly the limiting polynomial of $(w,\pi)$ in Theorem \ref{thm.main}.

Observe that the effective Stokes polynomials in $\mathcal{S}_m^{\rm eff}(\Omega)$ do not satisfy the no-slip condition either on $\partial \Omega$ or $\partial \R^d_+$. However, in view of Theorem \ref{thm.pointwise}, the condition on $\partial \R^d_+$ satisfied by $w_{\rm poly}$ can be regarded as the effective boundary condition for the solution $(u,p)$. We would like to understand it as the wall law for $(u,p)$ in a local setting. This interpretation is indeed compatible with the literature, in the sense that the condition satisfied by the elements from the first-order space $\mathcal{S}_1^{\rm eff}(\Omega)$ is the classical Navier wall law: 
\begin{equation*}
    w_{\rm poly}(x,0) = M \partial_y w_{\rm poly}(x,0)
    \quad \text{for any }
    (w_{\rm poly}, \pi_{\rm poly}) \in \mathcal{S}_1^{\rm eff}(\Omega),
\end{equation*}
where $M$ is an intrinsic constant ($d\times d$)-matrix determined only by $\Omega$ whose last column is $0$. This fact was addressed by \cite{HP20} in a rescaled setting and is dubbed the local Navier wall law. We point out that the Navier wall law plays a role as a corrected boundary condition (versus the no-slip boundary condition) on the flattened boundary that provides more accurate and effective numerical and theoretical calculations \cite{APV98,JM01,JM03,G09,GM10,MNN13,H16}. In Subsection \ref{subsec.wall.law}, we identify the local wall laws of arbitrary order. As a simple and concrete example, we compute the second-order wall law in the 2D case in Subsection \ref{subsec.Example}.

\subsection{Key ideas of the proof of Theorem \ref{intro.thm.bl}}\label{subsec.intro.key}
Let us explain the key ideas of the proof of Theorem \ref{intro.thm.bl}, which is the core of this paper. The goal is to build a weak solution $(v^\alpha, q^\alpha) = (v^{\alpha,l}_{(i)}, q^{\alpha,l}_{(i)})$ to the Stokes system 
\begin{equation}\tag{BL$^{\alpha}$}\label{intro.eq.bl.alpha}
\left\{
\begin{array}{ll}
-\Delta v^{\alpha} + \nabla q^{\alpha}=0 &\text{in } \Omega \\
\nabla\cdot v^{\alpha}=0 &\text{in } \Omega \\
v^{\alpha}(x,y)+x^\alpha y^l{\bf e}_i=0 &\text{on } \Gamma 
\end{array}
\right.
\end{equation}
for a given triplet $(\alpha,l,i)\in\Z_{\ge0}^{d-1}\times \Z_{+} \times \{1,\cdots,d\}$. Theorem \ref{intro.thm.bl} then follows by linearity of the equations. Notice that \eqref{intro.eq.bl.alpha} coincides with \eqref{intro.eq.1st.BL} if $(\alpha,l)=(0,1)$ and is a system for the second-order boundary layers if $|\alpha|+l=2$. Again, the difficulty in solving the problem \eqref{intro.eq.bl.alpha} is that the boundary value has growth when $|x|\to \infty$. The method in \cite{HPZ} is no longer applicable since it is restricted to cases when $|\alpha|+l=2$.

The new idea is to introduce an ansatz 
\begin{equation}\label{intro.ansatz}
\begin{aligned}
v^{\alpha}(x,y) = 
\sum_{\beta\le\alpha} 
\dbinom{\alpha}{\beta} x^{\alpha-\beta} V^{\beta}(x,y), \qquad
q^{\alpha}(x,y) = 
\sum_{\beta\le\alpha} 
\dbinom{\alpha}{\beta} x^{\alpha-\beta} Q^{\beta}(x,y), 
\end{aligned}
\end{equation}
where the family of new unknown functions $\{(V^{\beta},Q^{\beta})\}$ is assumed to be periodic in $x$. This ansatz is adopted in view of the Taylor expansion of $(v^{\alpha},q^{\alpha})$ at the origin but with variable coefficients. It systematically generalizes the heuristic method in \cite{HPZ}. In order to recover the boundary condition on $v^\alpha$, one needs compatibility conditions on $\Gamma$ for $\{(V^{\beta},Q^{\beta})\}$. Here we assume that $(V^0,Q^0)$ is the solution bounded at infinity of \eqref{intro.eq.bl.alpha} with $\alpha=0$ and that $V^{\beta}=0$ on $\Gamma$ for all $\beta\neq0$. The unique existence of $(V^0,Q^0)$ can be proved easily thanks to the small vertical extent of $\Omega\setminus\R^d_+$; see Lemma \ref{prop.bl.zero}.

Inserting \eqref{intro.ansatz} to \eqref{intro.eq.bl.alpha}, we see that $(V^{\beta},Q^{\beta})$ must solve the recursive system  
\begin{equation}\label{intro.eq.bl.beta}
\begin{aligned}
-\Delta V^{\beta} + \nabla Q^{\beta}
&= \sum_{i=1}^{d-1}
\Big(
2\dbinom{\beta_i}{2} 
V^{\beta-2{\bf e}_i}
+ 2\dbinom{\beta_i}{1} 
\partial_i V^{\beta-{\bf e}_i}
- \dbinom{\beta_i}{1} 
Q^{\beta-{\bf e}_i} {\bf e}_i
\Big), \\
\nabla\cdot V^{\beta}
&=
-\sum_{i=1}^{d-1}
\dbinom{\beta_i}{1} 
V^{\beta-{\bf e}_i}_i.
\end{aligned}
\end{equation}
Here all the notations with multi-indices belonging to $\Z^d\setminus\Z_{\ge0}^d$ are understood to be zero. Despite its linearity, the analysis of the system \eqref{intro.eq.bl.beta} is not straightforward. In fact, the solutions of \eqref{intro.eq.bl.beta} can grow at spatial infinity, which prevents us to apply the Lax-Milgram lemma. To illustrate it, let us put $\beta={\bf e}_1$ in \eqref{intro.eq.bl.beta}. Then the second line reads
\begin{equation*}
\begin{aligned}
\nabla\cdot V^{{\bf e}_1}
&=
-V^{0}_1.
\end{aligned}
\end{equation*}
It can be shown that the right-hand side converges to a negative constant if $(l,i)=(1,1)$ and $\Omega\neq\R^d_+$, namely if $V^{0}$ is a nontrivial first-order boundary layer. We refer to \cite[Proposition 3]{HP20} for the proof when $d=3$. Consequently, the solutions of \eqref{intro.eq.bl.beta} periodic in $x$ may grow when $y\to\infty$, otherwise the equation can be inconsistent. In addition, we need to consider the solutions with polynomial growth in $y$ as the recurrence proceeds.

The insight for this obstacle is to split $(V^{\beta},Q^{\beta})$ into 
\begin{equation*}
V^{\beta}(x,y)=V^{\beta}_{\rm poly}(y)+V^{\beta}_{\rm per}(x,y), \qquad 
Q^{\beta}(x,y)=Q^{\beta}_{\rm poly}(y)+Q^{\beta}_{\rm per}(x,y),
\end{equation*}
and study the pairs $(V^{\beta}_{\rm poly},Q^{\beta}_{\rm poly})$ and $(V^{\beta}_{\rm per},Q^{\beta}_{\rm per})$ in the two different manners:
\begin{itemize}
\item $V^{\beta}_{\rm poly}$ and $Q^{\beta}_{\rm poly}$ are polynomials in $y$ of degree at most $|\beta|$. Roughly speaking, both are obtained by the integration of $V^{\beta-{\bf e}_i}_{\rm poly}, V^{\beta-2{\bf e}_i}_{\rm poly}$ and $Q^{\beta-{\bf e}_i}_{\rm poly}$. However, a simple computation does not work due to the vectorial structure of the problem \eqref{intro.eq.bl.beta}.

\item $V^{\beta}_{\rm per}$ and $Q^{\beta}_{\rm per}$ are periodic functions in $x$ decaying exponentially when $y\to\infty$. Although they are represented explicitly on flat domains $\{y=L\}$ for $L>1$, we need to be careful in deriving the quantitative bounds when $|\beta|$ is large.
\end{itemize}

The procedure outlined above is carried out in Section \ref{sec.bl}. We rely on induction on the length $|\beta|$ of $\beta\in\Z_{\ge0}^{d-1}$ in the proof, but the details are complex in nature, as at each level we need to estimate the decomposed $(V^{\beta},Q^{\beta})$. Eventually, the solutions of \eqref{intro.eq.bl.alpha} are constructed based on the ansatz \eqref{intro.ansatz}, as stated in Proposition \ref{thm.bl.alpha}.

\subsection{Notations and definitions}

The following is a summary of the notations and definitions used in this paper.

\noindent \underline{\it Domains.} For $R>1$ and $(x,y)\in\R^d$, we define $Q_R = Q_R(0)= (-R,R)^d$ and
\begin{equation*}
    Q_R(x,y) = Q_R(0) + (x,y),
    \qquad
    B_{R,+}(x,0) = Q_R(x,0)\cap \Omega.
\end{equation*}
For simplicity, we write $B_{R,+} = B_{R,+}(0,0)$.

\noindent \underline{\it Weak solutions.}
A pair $(u,p)\in H^1(B_{R,+})^d\times L^2(B_{R,+})$ is said to be a weak solution of \eqref{intro.S} if it satisfies: (i) $\nabla\cdot u=0$ in the sense of distributions, (ii) $\psi u\in H^1_0(Q_R(0))^d$ for any cut-off function $\psi\in C_0^\infty(Q_R(0))$, and (iii) the weak formulation
\begin{align*}
\int_{B_{R,+}} \nabla u \cdot \nabla \varphi
- \int_{B_{R,+}} p (\nabla\cdot\varphi)
= 0 \quad \text{for any } \varphi\in C^{\infty}_{0}(B_{R,+})^d.
\end{align*}
We emphasize that $u$ has natural zero-extension $\tilde{u}$ to $Q_R$ so that $\tilde{u}\in H^1(Q_R)^d$. We do not distinguish between $u$ and $\tilde{u}$. This fact will be used without mentioning in Section \ref{sec.largescale}.

Moreover, a pair $(v,q)\in H^1_{{\rm loc}}(\overline{\Omega})^d\times L^2_{{\rm loc}}(\overline{\Omega})$ is said to be a weak solution of \eqref{eq1.intro.thm.bl} if it satisfies: (i) $\nabla\cdot v=0$ in the sense of distributions, (ii) $\chi(v+P)\in H^1_0(\Omega)^d$ for any $\chi\in C^\infty_0(\R^d)$, and (iii) the weak formulation:
\begin{align*}
	\int_\Omega \nabla v \cdot\nabla \phi
	- \int_\Omega q (\nabla\cdot \phi)
	=0, \quad \text{for any } \phi\in C^\infty_0(\Omega)^d.
\end{align*}
In addition, the weak solution of the system \eqref{intro.eq.bl.beta} is defined in the similar manner as above.

\noindent \underline{\it Derivatives.} The derivative with respect $x_i$ and $y$ will be denoted respectively by $\partial_i = \partial/\partial x_i$ and $\partial_y = \partial/\partial y$. Moreover, we use the following derivative operators: 
\begin{equation*}
\begin{aligned}
    &\nabla' = (\partial_1,\cdots, \partial_{d-1}), &\qquad
    &\nabla = (\nabla', \partial_y),\\
    &\Delta' = \nabla'\cdot \nabla', &\qquad 
    &\Delta = \nabla\cdot \nabla. 
\end{aligned}
\end{equation*}

\noindent \underline{\it Multi-index.} An element of $\Z_{\ge 0}^{d-1}$ is called a multi-index. For $\alpha = (\alpha_1,\cdots, \alpha_{d-1}) \in \Z_{\ge 0}^{d-1}$, we define the length of $\alpha$ by $|\alpha|=\alpha_1+\cdots +\alpha_{d-1}$. We also define $\alpha! = \alpha_1!\cdots \alpha_{d-1}!$.
For $\alpha, \beta\in \Z_{\ge 0}^{d-1}$, we say that $\beta \le \alpha$ if $\alpha_i\le\beta_i$ holds for all $i=1,\cdots d-1$. Moreover, using $\alpha\in \Z_{\ge 0}^{d-1}$, we define the monomial and the higher-order derivative respectively by
\begin{equation*}
    x^\alpha = x_1^{\alpha_1} \cdots x_{d-1}^{\alpha_{d-1}}, \qquad
    \partial_x^\alpha = \partial_1^{\alpha_1} \cdots \partial_{d-1}^{\alpha_{d-1}}. 
\end{equation*}

\noindent \underline{\it Binomial and multinomial coefficients.} For $0\le k\le n$, the binomial coefficients is 
\begin{equation*}
    \binom{n}{k} = \frac{n!}{k!(n-k)!}. 
\end{equation*}
For $\alpha, \beta\in \Z_{\ge 0}^{d-1}$ with $\beta \le \alpha$, the (generalized) binomial coefficient is defined by 
\begin{equation*}
    \binom{\alpha}{\beta} = \frac{\alpha!}{\beta!(\alpha - \beta)!} = \binom{\alpha_1}{\beta_1} \cdots \binom{\alpha_{d-1}}{\beta_{d-1}}.
\end{equation*}
Moreover, for $\alpha\in \Z_{\ge 0}^{d-1}$ with $|\alpha|=m$, the multinomial binomial coefficient is defined by 
\begin{equation*}
    \binom{m}{\alpha} = \frac{m!}{\alpha!}. 
\end{equation*}

\noindent \underline{\it Vector and tensor calculus.} The standard basis of $\R^d$ is denoted by $\{{\bf e}_1,\cdots,{\bf e}_d\}$. 
For $\mathcal{X}=(\mathcal{X}_1,\cdots,\mathcal{X}_d),\mathcal{Y}=(\mathcal{Y}_1,\cdots,\mathcal{Y}_d)\in\C^d$, we set $\mathcal{X}\cdot \mathcal{Y}=\mathcal{X}_1 \mathcal{Y}_1+\cdots+\mathcal{X}_d \mathcal{Y}_d$. This coincides with the standard inner product if $\mathcal{X},\mathcal{Y}\in\R^d$. Moreover, depending on the context, we will write $(x,y)=(x_1,\cdots,x_{d-1},y)\in\C^d$ as the column vector $\begin{bmatrix} x \\ y \\ \end{bmatrix}$, which is distinguished from the binomial coefficient above.
For $m\in\Z_{\ge 0}$ and $\mathcal{X}\in\R^d$, the symmetric tensor $\mathcal{X}^{\otimes m}=\{(\mathcal{X}^{\otimes m})_{\alpha}\}_{\alpha}$ is defined by 
\begin{equation*}
	(\mathcal{X}^{\otimes m})_{\alpha} = \mathcal{X}^\alpha, 
	\quad \alpha\in\Z^d, 
	\quad |\alpha|=m.
\end{equation*}
For symmetric tensors $S=\{S_\alpha\}_{\alpha}, T=\{T_\alpha\}_{\alpha}$ of order $m$, we define the contraction by 
\begin{equation*}
	S:T = \sum_{|\alpha| = m} \dbinom{m}{\alpha} S_\alpha T_\alpha, \quad \alpha\in\Z^d. 
\end{equation*}
These notations are useful in briefly describing Taylor expansions; see Subsection \ref{subsec.velocityest}.

\noindent \underline{\it Average integral.} For a bounded open set $E\subset \R^d$ and a measurable function $f$ on $E$, we set 
\begin{equation*}
    \dashint_E f = \frac{1}{|E|} \int_E f, \qquad 
    \| f\|_{\ul{L}^2(E)} = \bigg( \dashint_E |f|^2 \bigg)^{1/2}.
\end{equation*}
%

\subsection{Organization of the paper}
This paper is organized as follows. In Section \ref{sec.stokespoly}, we study the properties of the polynomial solutions to the Stokes system on $\R^d_+$. The definition of the space of Stokes polynomials $\mathcal{S}_m(\R^d_+)$ will be given at the end of the section. Section \ref{sec.bl} is devoted to the proof of Theorem \ref{intro.thm.bl} with quantitative estimates. 
In addition, the space of heterogeneous Stoke polynomials $\mathcal{S}_m(\Omega)$ is defined in Subsection \ref{subsec.het.stokespoly}.  In Section \ref{sec.largescale}, using the boundary layers, we prove Theorem \ref{thm.main}, as well as a Liouville theorem of arbitrary order. Finally, in Section \ref{sec.effective}, we define the space of effective Stokes polynomials $\mathcal{S}_m^{\rm eff}(\Omega)$. As applications, we prove Theorem \ref{thm.pointwise} in Subsection \ref{subsec.effective} and derive the local wall laws in Subsection \ref{subsec.wall.law}. 

\subsection*{Acknowledgements}
M.H. acknowledges the discussion with Prof. Saiei-Jaeyeong Matsubara-Heo in the early stage of this work. M.H. is partially supported by JSPS KAKENHI Grant Number JP 20K14345. 

\section{Stokes polynomials}\label{sec.stokespoly}
In this section, we consider the Stokes equations imposed on the half-space: 
\begin{equation}\tag{SH}\label{eq.stokes.halfspace}
\left\{
\begin{array}{ll}
-\Delta u + \nabla p=0 &\text{in } \R^{d}_+ \\
\nabla\cdot u=0 &\text{in } \R^{d}_+ \\
u=0 &\text{on } \partial\R^{d}_+.
\end{array}
\right.
\end{equation}
Our aim is to study the structure of the vector space $\mathcal{S}_m(\R^{d}_+)$ of all the polynomial solutions of \eqref{eq.stokes.halfspace} up to degree $m$. We are aware that the main theorems can be proved independently of this section. However, it could serve as a useful reference when the results of this paper are applied in more practical settings.

Let $\dot{\mathcal{P}}_m(\R^{d})$ be the space of homogeneous polynomials of degree $m\in\Z_{\ge0}$. Define the space of homogeneous Stokes polynomials by 
\begin{equation*}
\begin{aligned}
\dot{\mathcal{S}}_m(\R^{d}_+) 
=\{(u,p)\in\dot{\mathcal{P}}_m(\R^{d})^{d}\times \dot{\mathcal{P}}_{m-1}(\R^{d})~|~(u,p) \ \, \text{is a solution of} \ \, \eqref{eq.stokes.halfspace}\}. 
\end{aligned}
\end{equation*}
When $m=1$, it is easily seen that
\begin{equation*}
\begin{aligned}
\dot{\mathcal{S}}_1(\R^{d}_+) 
&=\Big\{( \sum_{i=1}^{d-1} A_i y{\bf e}_i,L)~|~A_i ,L\in\R\Big\} \\
&=\spn \{(y{\bf e}_1,0), \cdots, (y{\bf e}_{d-1},0), (0,1)\}. 
\end{aligned}
\end{equation*}
Thus,
\begin{equation}\label{eq.dim.S1}
    \dim \dot{\mathcal{S}}_1(\R^{d}_+) = d.
\end{equation}

To investigate the structure of $\dot{\mathcal{S}}_m(\R^{d}_+)$ when $m\ge2$, we focus on the pressure term. The solenoidal condition implies that $p=p(x,y)$ solves the following Neumann problem
\begin{equation*}
\left\{
\begin{array}{ll}
\Delta p=0 &\text{in } \R^{d}_+ \\
\partial_y p=\Delta u_{d} &\text{on } \partial\R^{d}_+. 
\end{array}
\right.
\end{equation*}
We recall that $u_{d}$ denotes the $d$-th component of $u=(u_1,\cdots,u_d)$. Introducing 
\begin{equation*}
\begin{aligned}
\dot{\mathcal{A}}_m(\R^{d}) 
&:=\{p\in \dot{\mathcal{P}}_m(\R^{d})~|~\Delta p =0\}, 
\end{aligned}
\end{equation*}
we see that the definition of $\dot{\mathcal{S}}_m(\R^{d}_+)$ can be rewritten as 
\begin{equation*}
\begin{aligned}
\dot{\mathcal{S}}_m(\R^{d}_+) 
&=\{(u,p)\in\dot{\mathcal{P}}_m(\R^{d})^{d}\times \dot{\mathcal{A}}_{m-1}(\R^{d})~|~(u,p) \ \, \text{is a solution of} \ \, \eqref{eq.stokes.halfspace}\}. 
\end{aligned}
\end{equation*}

Thus we begin by considering harmonic polynomials in half-spaces. The following lemma provides a relation between an element of $\dot{\mathcal{A}}_{m}(\R^{d})$ and its trace on $\partial\R^{d}_+$. We omit the proof as it is a simple computation.

\begin{lemma}\label{prop.harmonic}
Let $m\ge 1$ and $q\in\dot{\mathcal{A}}_{m}(\R^{d})$. Then there exists $q_1(x) \in \dot{\mathcal{P}}_m(\R^{d-1})$ and $q_2(x) \in \dot{\mathcal{P}}_{m-1}(\R^{d-1})$ such that
\begin{equation*}
    q(x,y) = \sum_{j=0}^{\infty} \frac{(-1)^j}{(2j)!} ((\Delta')^j q_1(x)) y^{2j} + \sum_{j=0}^{\infty} \frac{(-1)^j}{(2j+1)!} ((\Delta')^j q_2(x) ) y^{2j+1}.
\end{equation*}
In particular, if $q(x,0) = 0$, then $q_1 = 0$. 
\end{lemma}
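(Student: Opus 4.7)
The plan is to exploit the homogeneity of $q$ by expanding it as a polynomial in $y$ with coefficients depending on $x$, and then to use the harmonic equation $\Delta q = 0$ to derive a linear recurrence that determines all coefficients from the lowest two. Specifically, I would write
\begin{equation*}
q(x,y) = \sum_{k=0}^{m} a_k(x) y^k,
\end{equation*}
and observe that since $q$ is homogeneous of degree $m$ on $\R^d$, each coefficient $a_k(x)$ must lie in $\dot{\mathcal{P}}_{m-k}(\R^{d-1})$.

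Next, I would substitute this expansion into $\Delta q = \Delta' q + \partial_y^2 q = 0$ and match coefficients of $y^k$ for $0 \le k \le m-2$ to obtain
\begin{equation*}
(k+2)(k+1)\, a_{k+2}(x) = -\Delta' a_k(x).
\end{equation*}
This recurrence splits into two independent chains: an even chain starting from $a_0$, and an odd chain starting from $a_1$. Setting $q_1 := a_0 \in \dot{\mathcal{P}}_m(\R^{d-1})$ and $q_2 := a_1 \in \dot{\mathcal{P}}_{m-1}(\R^{d-1})$, a straightforward induction on $j$ gives
\begin{equation*}
a_{2j}(x) = \frac{(-1)^j}{(2j)!} (\Delta')^j q_1(x), \qquad a_{2j+1}(x) = \frac{(-1)^j}{(2j+1)!} (\Delta')^j q_2(x).
\end{equation*}
Reinserting these formulas into the expansion of $q$ yields the desired identity; the sums can be written as infinite series since $(\Delta')^j q_1$ and $(\Delta')^j q_2$ vanish once $2j$ (respectively $2j+1$) exceeds $m$.

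For the final assertion, I would simply evaluate the obtained formula at $y=0$: only the $j=0$ term in the even-indexed sum survives, so $q(x,0) = q_1(x)$. Hence $q(x,0) \equiv 0$ forces $q_1 \equiv 0$, as claimed.

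Since the statement is essentially a computational unraveling of the interaction between homogeneity and harmonicity, I do not anticipate a real obstacle in the argument. The only mild bookkeeping is verifying the degrees and the inductive identification of the signs and factorials in the recurrence, both of which are routine.
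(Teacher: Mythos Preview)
Your proposal is correct and is precisely the ``simple computation'' the paper alludes to before omitting the proof. The expansion in powers of $y$, the recurrence from $\Delta' q + \partial_y^2 q = 0$, and the evaluation at $y=0$ are exactly the intended argument.
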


Next let us define an operator $\Delta_{\rm D}^{-1}$ on the set $\{x^\alpha y^l~|~|\alpha|+l=m\}$ by 
\begin{equation*}
\begin{aligned}
\Delta_{\rm D}^{-1} (x^\alpha y^l) 
&= \sum_{j=0}^{\infty}  \frac{(-1)^j l!}{(l+2j+2)!} ((\Delta')^j x^\alpha) y^{l+2j+2} \\
&= \frac{1}{(l+2)(l+1)} x^\alpha y^{l+2} 
- \frac{1}{(l+4)(l+3)(l+2)(l+1)} (\Delta' x^\alpha) y^{l+4}
+ \cdots. 
\end{aligned}
\end{equation*}
Then $\Delta_{\rm D}^{-1}$ extends to an $\R$-linear map $\Delta_{\rm D}^{-1}:\dot{\mathcal{P}}_m(\R^{d}) \to \dot{\mathcal{P}}_{m+2}(\R^{d})$.

%
\begin{lemma}\label{prop.inv.dirichlet.lap.}
Let $m\ge1$. Then the following statements hold.

\begin{enumerate}[(i)]
\item Let $f\in\dot{\mathcal{P}}_m(\R^{d})$. Then $u:=\Delta_{\rm D}^{-1} f$ satisfies 
\begin{equation}\label{eq.Delta-1}
\left\{
\begin{array}{ll}
\Delta u=f &\text{in } \R^{d}_+ \\
u=0 &\text{on } \partial\R^{d}_+.
\end{array}
\right.
\end{equation}

\item Let $f\in\dot{\mathcal{P}}_m(\R^{d})$. For $i=1,2,\cdots,d-1$, we have
\begin{equation}\label{eq.commutative.del.i}
\begin{aligned}
\partial_i \Delta_{\rm D}^{-1} f
=\Delta_{\rm D}^{-1} \partial_i f, 
\end{aligned}
\end{equation}
and
\begin{equation}\label{eq.dyDelta-1}
\begin{aligned}
\partial_y \Delta_{\rm D}^{-1} f
=\Delta_{\rm D}^{-1} \partial_y f + \partial_y \Delta_D^{-1} (f(x,0)), 
\end{aligned}
\end{equation}
In particular, $\partial_y$ does not commute with $\Delta_{\rm D}^{-1}$ on the space spanned by $\{x^\alpha\}_{|\alpha|=m}$. 
\end{enumerate}
\end{lemma}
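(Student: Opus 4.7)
By $\R$-linearity of $\Delta_{\rm D}^{-1}$, it suffices to establish both statements on a single monomial $f=x^\alpha y^l$, and then extend by linearity. Note that since $x^\alpha$ is a polynomial in $x$, the series defining $\Delta_{\rm D}^{-1} f$ is actually a finite sum (it terminates at $j=\lfloor|\alpha|/2\rfloor$), so every manipulation below is purely algebraic.

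For (i), set $u=\Delta_{\rm D}^{-1}(x^\alpha y^l)$. Computing $\Delta' u$ shifts the power of $(\Delta')$ up by one, giving, after a shift of summation index,
\begin{equation*}
\Delta' u = \sum_{j=1}^{\infty} \frac{(-1)^{j-1}\,l!}{(l+2j)!}\,((\Delta')^j x^\alpha)\,y^{l+2j},
\end{equation*}
while differentiating the factor $y^{l+2j+2}$ twice yields
\begin{equation*}
\partial_y^2 u = \sum_{j=0}^{\infty} \frac{(-1)^j\,l!}{(l+2j)!}\,((\Delta')^j x^\alpha)\,y^{l+2j}.
\end{equation*}
Adding these two series, the $j\ge 1$ contributions cancel term by term, leaving only the $j=0$ term of $\partial_y^2 u$, which is exactly $x^\alpha y^l=f$. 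The boundary condition $u(x,0)=0$ is clear because each term of $u$ carries a factor $y^{l+2j+2}$ with $l+2j+2\ge 2$.

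For (ii), the commutation \eqref{eq.commutative.del.i} with $\partial_i$ ($i\le d-1$) is immediate: $\partial_i$ commutes with $\Delta'$ and with multiplication by powers of $y$, so applying $\partial_i$ term by term inside the series defining $\Delta_{\rm D}^{-1}$ reproduces $\Delta_{\rm D}^{-1}\partial_i f$. For the $\partial_y$ identity \eqref{eq.dyDelta-1}, I split into cases according to whether $l\ge 1$ or $l=0$. If $l\ge 1$, a direct computation shows
\begin{equation*}
\partial_y \Delta_{\rm D}^{-1}(x^\alpha y^l) = \sum_{j=0}^{\infty} \frac{(-1)^j\,l!}{(l+2j+1)!}\,((\Delta')^j x^\alpha)\,y^{l+2j+1},
\end{equation*}
and the same series is obtained for $\Delta_{\rm D}^{-1}\partial_y(x^\alpha y^l)=l\,\Delta_{\rm D}^{-1}(x^\alpha y^{l-1})$ after absorbing the factor $l$ into the factorial; since $(x^\alpha y^l)(x,0)=0$, the correction term vanishes and \eqref{eq.dyDelta-1} holds. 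If $l=0$, then $\Delta_{\rm D}^{-1}\partial_y(x^\alpha)=0$, while $(x^\alpha)(x,0)=x^\alpha$, so the asserted identity reduces to the tautology $\partial_y\Delta_{\rm D}^{-1}(x^\alpha)=\partial_y\Delta_{\rm D}^{-1}(x^\alpha)$. In particular, on the space spanned by $\{x^\alpha\}_{|\alpha|=m}$ the right-hand correction in \eqref{eq.dyDelta-1} is nonzero (it equals the full left-hand side), which exhibits the failure of commutation between $\partial_y$ and $\Delta_{\rm D}^{-1}$.

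The whole argument is essentially bookkeeping on a finite sum; the only place requiring attention is the index shift in part (i), where one must verify that the $j=0$ term of $\Delta' u$ is absent so that no unwanted cancellation with $f$ occurs. No part of the argument is genuinely hard.
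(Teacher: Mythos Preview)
Your proof is correct and follows essentially the same approach as the paper: both reduce to monomials by linearity, note that (i) and the $\partial_i$ commutation follow directly from the definition, and verify \eqref{eq.dyDelta-1} by observing that $\partial_y$ commutes with $\Delta_{\rm D}^{-1}$ on monomials $x^\alpha y^l$ with $l\ge 1$, leaving the $l=0$ terms to produce the correction. The paper's write-up is simply terser, declaring the first two facts ``obvious'' and packaging the $\partial_y$ argument via the decomposition $f(x,y)=\big(f(x,y)-f(x,0)\big)+f(x,0)$ rather than the explicit case split you give.
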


\begin{proof}
Note that \eqref{eq.Delta-1} and \eqref{eq.commutative.del.i} are obvious by the definition of $\Delta_D^{-1}$. To see \eqref{eq.dyDelta-1}, first observe that $\partial_y \Delta_D^{-1} (x^\alpha y^l) =  \Delta_D^{-1} \partial_y(x^\alpha y^l)$, if $l\ge 1$. By linearity, this implies
\begin{equation*}
    \partial_y \Delta_{\rm D}^{-1} ( f(x,y) - f(x,0)) =  \Delta_{\rm D}^{-1} \partial_y( f(x,y) - f(x,0)) = \Delta_{\rm D}^{-1} \partial_y f,
\end{equation*}
which gives the desired result.
\end{proof}
%

Now we give a solution formula for \eqref{eq.stokes.halfspace}, which regards the pressure as a source term.
%
\begin{proposition}\label{prop.solformula}
Let $m\ge2$. The following statements hold.

\begin{enumerate}[(i)]
\item Let $p\in\dot{\mathcal{A}}_{m-1}(\R^{d})$. Set
$$U=\Delta_{\rm D}^{-1}\nabla p \in \dot{\mathcal{P}}_m(\R^{d})^{d}.$$
Then the pair $(U,p)$ satisfies 
\begin{equation}\label{eq1.prop.solformula}
\left\{
\begin{array}{ll}
-\Delta U + \nabla p=0 &\text{in } \R^{d}_+ \\
\nabla\cdot U= \partial_y \Delta_{\rm D}^{-1}(\partial_y p(x,0)) &\text{in } \R^{d}_+ \\
U=0 &\text{on } \partial\R^{d}_+.
\end{array}
\right.
\end{equation}

\item Let $p\in\dot{\mathcal{A}}_{m-1}(\R^{d})$. Set
$$V= (V_1,0,\cdots,0)\in \dot{\mathcal{P}}_m(\R^{d})^{d}$$
with $V_1$ defined by 
\begin{equation*}
\begin{aligned}
V_1(x,y) & = \sum_{j=0}^\infty \frac{(-1)^j}{(2j+1)!} ((\Delta')^j c_{m-1}(x)) y^{2j+1} \\
& = c_{m-1}(x) y - \frac{1}{3\cdot2} (\Delta'c_{m-1}(x)) y^3 + \cdots,\\
c_{m-1}(x) &= -\int_{0}^{x_1} \partial_y p(z,x_2,\cdots,x_{d-1},0) \dd z.
\end{aligned}
\end{equation*}
Then $V$ satisfies 
\begin{equation}\label{eq2.prop.solformula}
\left\{
\begin{array}{ll}
-\Delta V=0 &\text{in } \R^{d}_+ \\
\nabla\cdot V=-\partial_y \Delta_{\rm D}^{-1} (\partial_y p(x,0)) &\text{in } \R^{d}_+ \\
V=0 &\text{on } \partial\R^{d}_+.
\end{array}
\right.
\end{equation}

\item Define the injective linear map $S:\dot{\mathcal{A}}_{m-1}(\R^{d})\to\dot{\mathcal{P}}_{m}(\R^{d}_+)^{d}$ by 
$$S[p]=U+V.$$
Then the pair $(S[p],p)$ is a solution of \eqref{eq.stokes.halfspace}, namely,  $(S[p],p)\in\dot{\mathcal{S}}_{m}(\R^{d}_+)$. 
\end{enumerate}
\end{proposition}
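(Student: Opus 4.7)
The plan is to verify the three parts in order, with each reducing to a direct algebraic manipulation enabled by Lemmas \ref{prop.harmonic} and \ref{prop.inv.dirichlet.lap.}.

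For part (i), I would first note that $U=0$ on $\partial\R^d_+$ and $\Delta U=\nabla p$ follow componentwise from Lemma \ref{prop.inv.dirichlet.lap.}(i), which immediately gives the boundary condition and the momentum equation. The substantive computation is the divergence, which splits as
\begin{equation*}
\nabla\cdot U = \sum_{i=1}^{d-1} \partial_i \Delta_{\rm D}^{-1}\partial_i p + \partial_y \Delta_{\rm D}^{-1}\partial_y p.
\end{equation*}
Applying \eqref{eq.commutative.del.i} to the horizontal terms and \eqref{eq.dyDelta-1} to the vertical term collects everything into $\Delta_{\rm D}^{-1}(\Delta p) + \partial_y \Delta_{\rm D}^{-1}(\partial_y p(x,0))$; since $p$ is harmonic, the first summand vanishes.

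For part (ii), Lemma \ref{prop.harmonic} identifies $V_1$ as the unique harmonic polynomial in $\R^d$ with zero trace on $\{y=0\}$ and normal derivative $c_{m-1}(x)$ there; hence $\Delta V=0$ and $V=0$ on $\partial\R^d_+$. The key identity to check is $\partial_1 V_1 = -\partial_y \Delta_{\rm D}^{-1}(\partial_y p(x,0))$. I would differentiate the defining series of $V_1$ term by term, using $\partial_1 c_{m-1}(x) = -\partial_y p(x,0)$ and the commutativity of $\partial_1$ with $(\Delta')^j$. Independently, I would expand $\Delta_{\rm D}^{-1}$ applied to the $y$-independent polynomial $\partial_y p(x,0)$ directly from its definition (the case $l=0$) and then differentiate in $y$; the simplification $(2j+2)!/(2j+2) = (2j+1)!$ aligns the two series exactly.

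For part (iii), summing (i) and (ii) cancels the divergences, so $(S[p],p)$ satisfies \eqref{eq.stokes.halfspace} and hence lies in $\dot{\mathcal{S}}_m(\R^d_+)$. For injectivity, if $S[p]=0$ then the momentum equation forces $\nabla p=0$, and since $p\in\dot{\mathcal{A}}_{m-1}(\R^d)$ is homogeneous of degree $m-1\ge 1$, this yields $p=0$. The only nontrivial obstacle is the factorial bookkeeping in (ii); once the two series are seen to match, the rest is routine verification.
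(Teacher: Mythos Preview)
Your proposal is correct and follows essentially the same approach as the paper's proof: the divergence computation in (i) via \eqref{eq.commutative.del.i}--\eqref{eq.dyDelta-1} and the harmonicity of $p$, the term-by-term differentiation of the series for $V_1$ in (ii) matched against the definition of $\Delta_{\rm D}^{-1}$ at $l=0$, and the cancellation plus injectivity argument in (iii) are exactly what the paper does. Your invocation of Lemma~\ref{prop.harmonic} to justify $\Delta V_1=0$ is a slight elaboration over the paper's ``easy to check,'' but otherwise the arguments coincide.
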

%
%
\begin{proof} (i) Lemma \ref{prop.inv.dirichlet.lap.} (i) implies that the first and third lines of \eqref{eq1.prop.solformula} are satisfied by $(U,p)$. It follows from \eqref{eq.commutative.del.i} and \eqref{eq.dyDelta-1} that
\begin{equation}
    \begin{aligned}
       \nabla \cdot U & = \nabla'\cdot \Delta_D^{-1} \nabla' p + \partial_y  \Delta_D^{-1} \partial_y p \\
       & = \Delta_D^{-1} \Delta' p + \Delta_D^{-1} \partial_y^2 p +  \partial_y \Delta_D^{-1} (\partial_y p(x,0)) \\
       & = \Delta_D^{-1} \Delta p + \partial_y \Delta_D^{-1} (\partial_y p(x,0)) \\
       & = \partial_y \Delta_D^{-1} (\partial_y p(x,0)),
    \end{aligned}
\end{equation}
where the last line follows from the fact $p\in\dot{\mathcal{A}}_{m-1}(\R^{d})$. Hence $(U,p)$ satisfies the second line of \eqref{eq1.prop.solformula} as well.

(ii) It is easy to check that $V$ satisfies the first and third lines of \eqref{eq2.prop.solformula}. A direct computation shows
\begin{equation*}
\begin{aligned}
\nabla\cdot V(x,y)
&= \partial_1 V_1(x,y) \\
& = \sum_{j=0}^\infty \frac{(-1)^j}{(2j+1)!} ((\Delta')^j \partial_1 c_{m-1}(x)) y^{2j+1}
\\
& = 
- \sum_{j=0}^\infty \frac{(-1)^j}{(2j+1)!} ((\Delta')^j \partial_y p(x,0) ) y^{2j+1}
\\
&= -\partial_y \Delta_{\rm D}^{-1} (\partial_y p(x,0)), 
\end{aligned}
\end{equation*}
where the definition of $\Delta_D^{-1}$ has been used in the last equation.
This gives the second line of \eqref{eq2.prop.solformula}.

(iii) It follows from (i) and (ii) that $(S[p],p)$ is a solution of \eqref{eq.stokes.halfspace}. To show the injectivity, let us assume that $S[p]=0$. Then one has $\nabla p=0$ from the equations, which implies $p=0$ because $p\in\dot{\mathcal{A}}_{m-1}(\R^{d})$ and $m\ge2$. This completes the proof. 
\end{proof}
%

Now we provide a structure property of $\dot{\mathcal{S}}_m(\R^{d}_+)$.

%
\begin{proposition}\label{thm.decom.stokespoly}
Let $m\ge2$ and let $S$ be the operator in Proposition \ref{prop.solformula} (iii). Then we have
\begin{equation*}
\begin{aligned}
\dot{\mathcal{S}}_m(\R^{d}_+)
=V_{m}^{(1)}\oplus V_{m}^{(2)}, 
\end{aligned}
\end{equation*}
where 
\begin{equation*}
\begin{aligned}
V_{m}^{(1)}
&=\{(u,p)\in\dot{\mathcal{S}}_m(\R^{d}_+)~|~p\in \dot{\mathcal{A}}_{m-1}(\R^{d}), u=S[p] \}, \\
V_{m}^{(2)}
&=\{(u,p)\in\dot{\mathcal{S}}_m(\R^{d}_+)~|~p=0\}. 
\end{aligned}
\end{equation*}
\end{proposition}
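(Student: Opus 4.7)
The plan is to verify the direct sum decomposition by checking the two defining conditions: that every homogeneous Stokes polynomial splits uniquely as a sum of one element from $V_m^{(1)}$ and one from $V_m^{(2)}$. The key structural observation is that the operator $S$ from Proposition \ref{prop.solformula} provides a right-inverse to the ``pressure extraction'' map $(u,p)\mapsto p$ restricted to $\dot{\mathcal{S}}_m(\R^d_+)$, so subtracting $S[p]$ from a given solution kills its pressure while preserving the Stokes structure.

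For the trivial intersection $V_m^{(1)}\cap V_m^{(2)}=\{0\}$, I would simply note that if $(u,p)$ lies in both subspaces, then $p=0$ (from the definition of $V_m^{(2)}$), and hence $u=S[p]=S[0]=0$ by linearity of $S$ (or its injectivity established in Proposition \ref{prop.solformula} (iii)). So the intersection is $\{(0,0)\}$.

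For the spanning property, given any $(u,p)\in\dot{\mathcal{S}}_m(\R^d_+)$, I would write
\begin{equation*}
(u,p) = (S[p],p) + (u-S[p], 0).
\end{equation*}
The first summand belongs to $V_m^{(1)}$ by the very definition of $V_m^{(1)}$ together with Proposition \ref{prop.solformula} (iii), which guarantees $(S[p],p)\in\dot{\mathcal{S}}_m(\R^d_+)$. For the second summand, I need to verify $(u-S[p],0)\in V_m^{(2)}$, which amounts to checking that it is itself a homogeneous Stokes polynomial of degree $m$ with zero pressure. The momentum equation $-\Delta(u-S[p])+\nabla 0 = 0$ follows since $\Delta u = \nabla p = \Delta S[p]$ by the Stokes equations for both pairs. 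The divergence-free condition $\nabla\cdot(u-S[p])=0$ follows since $\nabla\cdot u=0$ and, by combining \eqref{eq1.prop.solformula} and \eqref{eq2.prop.solformula}, $\nabla\cdot S[p]=\nabla\cdot U+\nabla\cdot V=0$. The boundary condition $(u-S[p])|_{\partial\R^d_+}=0$ is immediate from both $u$ and $S[p]$ vanishing there. Finally, $u-S[p]\in\dot{\mathcal{P}}_m(\R^d)^d$ since both terms are homogeneous of degree $m$.

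I do not expect any serious obstacle here. The content of the proposition is entirely packaged in Proposition \ref{prop.solformula}; the decomposition is essentially just the observation that $S$ is a section of the pressure projection, so the sequence $0\to V_m^{(2)}\to \dot{\mathcal{S}}_m(\R^d_+)\to \dot{\mathcal{A}}_{m-1}(\R^d)\to 0$ splits via $S$. The only minor care needed is to make sure that the computation $\nabla\cdot S[p]=0$ is invoked from the combined statements of parts (i) and (ii) of Proposition \ref{prop.solformula}, where the spurious divergences $\pm\partial_y\Delta_{\rm D}^{-1}(\partial_y p(x,0))$ cancel by design.
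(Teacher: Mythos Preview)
Your proposal is correct and follows essentially the same approach as the paper: both write $(u,p)=(S[p],p)+(u-S[p],0)$ and identify the two summands as elements of $V_m^{(1)}$ and $V_m^{(2)}$ respectively. You are slightly more explicit than the paper in verifying the trivial intersection and in checking that $(u-S[p],0)$ satisfies the full Stokes system (in particular the divergence-free condition via the cancellation of $\pm\partial_y\Delta_{\rm D}^{-1}(\partial_y p(x,0))$), but these are exactly the points the paper leaves implicit.
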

%

%
\begin{remark}\label{rem.thm.decom.stokespoly}
By this proposition and the injectivity of $S$, for given pair $(u,p)\in\dot{\mathcal{S}}_m(\R^{d}_+)$, 
one can find a unique pressure $p\in \dot{\mathcal{A}}_{m-1}(\R^{d})$ from the velocity $u\in \dot{\mathcal{P}}_m(\R^{d})^{d}$. 
\end{remark}
%

%
\begin{proof}
It suffices to show that $\dot{\mathcal{S}}_m(\R^{d}_+)\subset V_{N}^{(1)}\oplus V_{N}^{(2)}$. Let $(u,p)\in \dot{\mathcal{S}}_m(\R^{d}_+)$. If $p = 0$, then $(u,p)\in V_{N}^{(2)}$. If $p \in \dot{\mathcal{A}}_{m-1}(\R^{d}) \setminus\{0\}$, then $(S[p],p)\in V_{N}^{(1)}$ and $(v,q):=(u,p)-(S[p],p)\in V_{N}^{(2)}$. Hence we have $(u,p)=(S[p],p)+(v,q)\in V_{N}^{(1)}\oplus V_{N}^{(2)}$, which concludes the proof. 
\end{proof}
%

One can characterize the element of $V_{m}^{(2)}$ as follows.

%
\begin{lemma}\label{lem.zeropresure}
Let $w=(w_1,\cdots,w_{d-1},w_{d})\in V_{m}^{(2)}$. Then we have $w_{d} = 0$ and 
\begin{equation}\label{eq1.lem.zeropresure}
\begin{aligned}
w_i(x,y) & = \sum_{j=0}^{\infty} \frac{(-1)^j}{(2j+1)!} ((\Delta')^j a^{(i)}_{m-1}(x) ) y^{2j+1}, \qquad i=1,\cdots,d-1
\end{aligned}
\end{equation}
for some $a^{(1)}_{m-1},\cdots,a^{(d-1)}_{m-1}\in\dot{\mathcal{P}}_{m-1}(\R^{d-1})$ with constraint
\begin{equation}\label{eq2.lem.zeropresure}
\begin{aligned}
\partial_1 a^{(1)}_{m-1}(x) +\cdots+ \partial_{d-1} a^{(d-1)}_{m-1}(x)=0. 
\end{aligned}
\end{equation}
Conversely, any $w\in \dot{\mathcal{P}}_{m}(\R^{d})^d$ satisfying $w_d=0$, \eqref{eq1.lem.zeropresure} and \eqref{eq2.lem.zeropresure} belongs to $V_{m}^{(2)}$. 
\end{lemma}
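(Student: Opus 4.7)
The plan is to apply Lemma \ref{prop.harmonic} componentwise to $w$ and then exploit a parity argument in the variable $y$ within the divergence-free relation.

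First, since $(w,0) \in V_m^{(2)} \subset \dot{\mathcal{S}}_m(\R^d_+)$, the first line of \eqref{eq.stokes.halfspace} with $p=0$ forces each component $w_j$ to be a homogeneous harmonic polynomial of degree $m$, while the boundary condition gives $w_j(x,0)=0$ for $j=1,\ldots,d$. Applying Lemma \ref{prop.harmonic} (in which the $q_1$-branch vanishes because $w_j(x,0)=0$) then yields
\begin{equation*}
w_j(x,y) = \sum_{k=0}^{\infty} \frac{(-1)^k}{(2k+1)!} \bigl((\Delta')^k a^{(j)}_{m-1}(x)\bigr) y^{2k+1}, \qquad j=1,\ldots,d,
\end{equation*}
for some $a^{(j)}_{m-1} \in \dot{\mathcal{P}}_{m-1}(\R^{d-1})$.

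Next, I invoke the divergence-free condition $\sum_{i=1}^{d-1}\partial_i w_i + \partial_y w_d = 0$. The key observation is one of parity: each $\partial_i w_i$ with $i\le d-1$ is a polynomial in $y$ containing only \emph{odd} powers, whereas $\partial_y w_d$ contains only \emph{even} powers. Hence matching like powers of $y$ in the identity $\nabla\cdot w = 0$ forces the two contributions to vanish separately. The relation $\partial_y w_d \equiv 0$, together with the fact that $w_d$ itself is a sum of odd $y$-powers, gives $w_d \equiv 0$, which proves the first claim. Reading off the coefficient of $y^1$ in $\sum_{i=1}^{d-1}\partial_i w_i \equiv 0$ (equivalently, using that $\partial_i$ commutes with $(\Delta')^k$) yields the constraint \eqref{eq2.lem.zeropresure}.

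For the converse direction, given $w$ defined by \eqref{eq1.lem.zeropresure} with $w_d=0$ and \eqref{eq2.lem.zeropresure}, I need only verify that $(w,0) \in V_m^{(2)}$. The harmonicity of each $w_i$ is immediate from the structure of the series furnished by Lemma \ref{prop.harmonic}; the boundary condition $w|_{\partial\R^d_+}=0$ is obvious by inspection of the $y^{2k+1}$ factors; and the divergence-free condition reduces, after termwise differentiation and the commutation of $\partial_i$ with $(\Delta')^k$, to \eqref{eq2.lem.zeropresure}.

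I expect the only step requiring actual thought to be the parity argument in $\nabla\cdot w=0$, which simultaneously produces both $w_d=0$ and the affine constraint on the $a^{(i)}_{m-1}$; all other pieces are direct applications of Lemma \ref{prop.harmonic} and termwise differentiation of a terminating series.
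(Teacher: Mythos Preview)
Your proposal is correct and follows essentially the same approach as the paper: apply Lemma~\ref{prop.harmonic} componentwise to obtain the odd-in-$y$ series for each $w_j$, then read off from $\nabla\cdot w=0$ both that $a^{(d)}_{m-1}=0$ (hence $w_d=0$) and the constraint~\eqref{eq2.lem.zeropresure}. Your parity formulation is merely a slightly more explicit packaging of the same coefficient-matching the paper carries out.
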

%
%
\begin{proof}
Let $w\in V_{m}^{(2)}$. Then $w$ satisfies
\begin{equation*}
\left\{
\begin{array}{ll}
\Delta w=0 &\text{in } \R^{d}_+ \\
\nabla\cdot w=0 &\text{in } \R^{d}_+ \\
w=0 &\text{on } \partial\R^{d}_+.
\end{array}
\right.
\end{equation*}
From the first and third lines, and Lemma \ref{prop.harmonic}, there are $a^{(1)}_{m-1},\cdots,a^{(d)}_{m-1}\in\dot{\mathcal{P}}_{m-1}(\R^{d-1})$ such that \eqref{eq1.lem.zeropresure} holds.
On the other hand, $\nabla\cdot w = 0$ leads to
\begin{equation*}
\begin{aligned}
0 
= a^{(d)}_{m-1}(x) + (\partial_1 a^{(1)}_{m-1}(x) +\cdots+ \partial_{d-1} a^{(d-1)}_{m-1}(x)) y + \cdots.
\end{aligned}
\end{equation*}
This implies both that $w_d=0$ and that $\partial_1 a^{(1)}_{m-1}(x) +\cdots+ \partial_{d-1} a^{(d-1)}_{m-1}(x)=0$. It is easy to see that the converse holds true by the argument so far. The proof is complete.   
\end{proof}
%

Proposition \ref{thm.decom.stokespoly} and Lemma \ref{lem.zeropresure} imply the following corollary.

%
\begin{corollary}\label{cor.dim.stokespoly}
Let $d\ge2$ and $m\ge 1$. Then we have 
\begin{equation}\label{eq.dimDotSm}
    \dim \dot{\mathcal{S}}_m(\R^{d}_+) = d \dbinom{m+d-3}{d-2}.
\end{equation}
\end{corollary}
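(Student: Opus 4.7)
The plan is to treat $m=1$ directly via \eqref{eq.dim.S1}, and invoke the decomposition from Proposition \ref{thm.decom.stokespoly} for $m\ge 2$. When $m=1$ the formula reads $d\binom{d-2}{d-2}=d$, matching \eqref{eq.dim.S1}. For $m\ge 2$ the direct sum decomposition gives
\begin{equation*}
\dim \dot{\mathcal{S}}_m(\R^{d}_+) = \dim V_m^{(1)} + \dim V_m^{(2)},
\end{equation*}
and I would compute the two summands separately.

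For the first summand, the injectivity of $S:\dot{\mathcal{A}}_{m-1}(\R^{d})\to\dot{\mathcal{P}}_{m}(\R^{d})^{d}$ from Proposition \ref{prop.solformula}(iii) yields $\dim V_m^{(1)} = \dim \dot{\mathcal{A}}_{m-1}(\R^{d})$, and I would quote the classical dimension formula for spherical harmonics,
\begin{equation*}
\dim \dot{\mathcal{A}}_{m-1}(\R^{d}) = \binom{m+d-2}{d-1} - \binom{m+d-4}{d-1},
\end{equation*}
where binomial coefficients with top strictly smaller than bottom are set to zero so that the low $m$ cases are included uniformly.

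For the second summand, Lemma \ref{lem.zeropresure} identifies $V_m^{(2)}$ with the kernel of the divergence map
\begin{equation*}
\operatorname{div}_x: \dot{\mathcal{P}}_{m-1}(\R^{d-1})^{d-1} \to \dot{\mathcal{P}}_{m-2}(\R^{d-1}),
\qquad (a^{(1)},\ldots,a^{(d-1)}) \mapsto \sum_{i=1}^{d-1} \partial_i a^{(i)}.
\end{equation*}
This map is surjective: given $b\in \dot{\mathcal{P}}_{m-2}(\R^{d-1})$, set $a^{(1)}(x):=\int_0^{x_1} b(z,x_2,\ldots,x_{d-1})\dd z$ and $a^{(j)}:=0$ for $j\ge 2$; homogeneity of degree $m-1$ is immediate and $\partial_1 a^{(1)}=b$. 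Rank-nullity then gives
\begin{equation*}
\dim V_m^{(2)} = (d-1)\binom{m+d-3}{d-2} - \binom{m+d-4}{d-2}.
\end{equation*}

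Adding the two contributions and applying Pascal's identity twice in the form $\binom{m+d-2}{d-1}-\binom{m+d-4}{d-1}=\binom{m+d-3}{d-2}+\binom{m+d-4}{d-2}$ makes the $\binom{m+d-4}{d-2}$ terms cancel, collapsing the sum to $d\binom{m+d-3}{d-2}$. The main obstacle is purely clerical: verifying this Pascal cancellation, and confirming that the surjectivity construction together with the harmonic-polynomial dimension formula behave correctly in the degenerate corners $d=2$ and $m=2$, where some of the binomial coefficients involved vanish.
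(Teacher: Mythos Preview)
Your proposal is correct and follows essentially the same approach as the paper: both use the direct sum $\dot{\mathcal{S}}_m = V_m^{(1)}\oplus V_m^{(2)}$ from Proposition~\ref{thm.decom.stokespoly}, identify $\dim V_m^{(1)}=\dim\dot{\mathcal{A}}_{m-1}(\R^d)$ via the injectivity of $S$, compute $\dim V_m^{(2)}$ by rank--nullity applied to the surjective divergence map on $\dot{\mathcal{P}}_{m-1}(\R^{d-1})^{d-1}$, and finish with Pascal's identity. Your write-up is in fact slightly more explicit than the paper's, since you spell out both the surjectivity construction and the Pascal cancellation that the paper leaves as ``obvious'' or a one-line computation.
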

%
%
\begin{proof}
By Proposition \ref{thm.decom.stokespoly}, one has 
\begin{equation}\label{eq1.proof.cor.dim.stokespoly}
\begin{aligned}
\dim \dot{\mathcal{S}}_m(\R^{d}_+) 
=\dim V_{m}^{(1)} + \dim V_{m}^{(2)}. 
\end{aligned}
\end{equation}
The injectivity of $S:\dot{\mathcal{A}}_{m-1}(\R^{d})\to\dot{\mathcal{P}}_{m}(\R^{d}_+)^{d}$ implies that 
\begin{equation}\label{eq2.proof.cor.dim.stokespoly}
\begin{aligned}
\dim V_{m}^{(1)} 
&=\dim \dot{\mathcal{A}}_{m-1}(\R^{d}). 
\end{aligned}
\end{equation}
By Lemma \ref{lem.zeropresure}, there is an isomorphism
\begin{equation*}
\begin{aligned}
V_{m}^{(2)} 
&\cong
\{a=(a^{(1)}_{m-1}(x),\cdots,a^{(d-1)}_{m-1}(x))\in\mathcal{P}_{m-1}(\R^{d-1}_+)^{d-1}~|~\nabla'\cdot a=0\} \\
&= \ker\nabla'\cdot. 
\end{aligned}
\end{equation*}
Here $\nabla'\cdot$ is the linear map $\nabla'\cdot:\dot{\mathcal{P}}_{m-1}(\R^{d-1})^{d-1} \to \dot{\mathcal{P}}_{m-2}(\R^{d-1})$. Since it is obviously surjective, the homomorphism theorem yields that 
\begin{equation}\label{eq3.proof.cor.dim.stokespoly}
\begin{aligned}
\dim V_{m}^{(2)}
=\dim \ker\nabla'\cdot
=(d-1) \dim \dot{\mathcal{P}}_{m-1}(\R^{d-1}) - \dim \dot{\mathcal{P}}_{m-2}(\R^{d-1}). 
\end{aligned}
\end{equation}
From \eqref{eq1.proof.cor.dim.stokespoly}, \eqref{eq2.proof.cor.dim.stokespoly} and \eqref{eq3.proof.cor.dim.stokespoly}, and using the well-known facts
\begin{equation*}
\begin{aligned}
\dim \dot{\mathcal{P}}_{m}(\R^{d})
&= \dbinom{m+d-1}{d-1}, \\
\dim \dot{\mathcal{A}}_{m}(\R^{d})
&= \dbinom{m+d-1}{d-1} - \dbinom{m+d-3}{d-1}, \\
\dbinom{j}{k} + \dbinom{j}{k-1} &= \dbinom{j+1}{k} \quad \text{for any } 1\le k\le j,
\end{aligned}
\end{equation*}
we have
\begin{equation*}
\begin{aligned}
\dim \dot{\mathcal{S}}_m(\R^{d}_+)
&=\dbinom{m+d-2}{d-1} - \dbinom{m+d-4}{d-1} \\
&\quad
+ (d-1) \dbinom{m+d-3}{d-2} - \dbinom{m+d-4}{d-2} \\
& = d \dbinom{m+d-3}{d-2},
\end{aligned}
\end{equation*}
at least for $m\ge 2$. Finally note that the result for $m=1$ in \eqref{eq.dim.S1} is consistent with \eqref{eq.dimDotSm}. This completes the proof.
\end{proof}
%


Let us recall that $\mathcal{P}_m(\R^{d})$ is the set of polynomials up to degree $m$. We denote by $\mathcal{A}_m(\R^d)$ the set of harmonic polynomials up to degree $m$. Now we define
\begin{equation}\label{def.stokespoly}
\mathcal{S}_m(\R^{d}_+) = \dot{\mathcal{S}}_1(\R^{d}_+)\oplus \cdots \oplus \dot{\mathcal{S}}_m(\R^{d}_+) 
\subset \mathcal{P}_{m}(\R^{d})^d \times \mathcal{A}_{m-1}(\R^{d}). 
\end{equation}
Then $\mathcal{S}_m(\R^{d}_+)$ consists of all the polynomial solutions of \eqref{eq.stokes.halfspace} up to degree $m$. Corollary \ref{cor.dim.stokespoly} and a simple calculation show that
\begin{equation*}
    \dim \mathcal{S}_m(\R^{d}_+) = d \dbinom{m+d-2}{d-1}.
\end{equation*}
We call an element of $\mathcal{S}_m(\R^{d}_+)$  a Stokes polynomial in $\R^d_+$. Note that, by Remark \ref{rem.thm.decom.stokespoly}, for every $(u,p)\in\mathcal{S}_m(\R^{d}_+)$, we can recover the pressure part $p$ uniquely up to a constant from the velocity part $u$.

As seen in the introduction, the space $\mathcal{S}_m(\R^{d}_+)$ works well when studying the regularity for Stokes system in half-spaces. However, this is not the case in bumpy John domains since the polynomials in $\mathcal{S}_m(\R^{d}_+)$ have non-zero traces on the non-flat boundary $\Gamma = \partial \Omega$. In the next section,  We will construct the boundary layer correctors in order to eliminate the trace of these polynomials on $\Gamma$.

%
\section{Higher-order boundary layers}\label{sec.bl}
%
In this section, we construct a boundary layer corrector $(v^{\alpha}, q^{\alpha}) = (v^{\alpha,l}_{(i)}, q^{\alpha,l}_{(i)})$ solving
\begin{equation}\tag{BL$^{\alpha}$}\label{eq.bl.alpha}
\left\{
\begin{array}{ll}
-\Delta v^{\alpha} + \nabla q^{\alpha}=0 &\text{in } \Omega \\
\nabla\cdot v^{\alpha}=0 &\text{in } \Omega \\
v^{\alpha}(x,y)+x^\alpha y^l{\bf e}_i=0 &\text{on } \Gamma.
\end{array}
\right.
\end{equation}
Here all dependencies on $l$ and $i$ are omitted in $(v^{\alpha}, q^{\alpha})$ and \eqref{eq.bl.alpha} for simplicity. Recall that $\Omega$ is assumed to be a periodic bumpy John domain in Definition \ref{def.John2}.

The main result of this section is the following Proposition \ref{thm.bl.alpha}. As already notified in the introduction, Theorem \ref{intro.thm.bl} is a direct consequence of it thanks to linearity.

%
\begin{proposition}\label{thm.bl.alpha}
Let $\alpha\in\Z_{\ge0}^{d-1}$, $l\in\Z_{+}$ and $i\in\{1,\cdots,d\}$. There exists a weak solution $(v^{\alpha},q^{\alpha})\in H^1_{{\rm loc}}(\overline{\Omega})^{d}\times L^2_{{\rm loc}}(\overline{\Omega})$ of \eqref{eq.bl.alpha} satisfying
\begin{equation}\label{est1.thm.bl.alpha}
\begin{aligned}
\|v^{\alpha}\|_{\ul{L}^2(B_{r,+})}
&\le C r^{|\alpha|}, \quad r\ge1, \\
\|\nabla v^{\alpha}\|_{\ul{L}^2(B_{r,+})}
+ \|q^{\alpha}\|_{\ul{L}^2(B_{r,+})}
&\le C r^{|\alpha|-1/2}, \quad r\ge1.
\end{aligned}
\end{equation}
Moreover, $(v^{\alpha},q^{\alpha})$ can be decomposed into 
\begin{equation}\label{est2.thm.bl.alpha}
\begin{aligned}
v^{\alpha}(x,y)&= v^{\alpha}_{\rm poly}(x,y) + v^{\alpha}_{\rm het}(x,y), \\
q^{\alpha}(x,y)&= q^{\alpha}_{\rm poly}(x,y) + q^{\alpha}_{\rm het}(x,y). 
\end{aligned}
\end{equation}
The terms on the right-hand sides are described as follows: 
\begin{enumerate}
\item $v^{\alpha}_{\rm poly}(x,y)$ and $q^{\alpha}_{\rm poly}(x,y)$ are polynomials in $x$ and $y$, such that 
\begin{equation}\label{est3.thm.bl.alpha}
\begin{aligned}
|v^{\alpha}_{\rm poly}(x,y)|
&\le C(1+|x|+|y|)^{|\alpha|}, \\
|\nabla v^{\alpha}_{\rm poly}(x,y)|
+ |q^{\alpha}_{\rm poly}(x,y)|
&\le C(1+|x|+|y|)^{|\alpha|-1}. 
\end{aligned}
\end{equation}

\item $v^{\alpha}_{\rm het}(x,y)$ and $q^{\alpha}_{\rm het}(x,y)$ are heterogeneous polynomials, namely, linear combinations of a product of a monomial in $x$ and a function periodic in $x$ decaying exponentially as $y\to\infty$, such that 
\begin{equation}\label{est4.thm.bl.alpha}
\begin{aligned}
\|\nabla^j v^{\alpha}_{\rm het}\|_{\ul{L}^2(B_{r,+})}
+ \|q^{\alpha}_{\rm het}\|_{\ul{L}^2(B_{r,+})}
\le C r^{|\alpha|-1/2}, \quad r\ge1, 
\end{aligned}
\end{equation}
for $j=0,1$. Besides far away from the boundary, we have 
\begin{equation}\label{est5.thm.bl.alpha}
\begin{aligned}
|\nabla^j v^{\alpha}_{\rm het}(x,y)|
+ |q^{\alpha}_{\rm het}(x,y)|
\le C (1+|x|)^{|\alpha|} e^{-y/2}, \quad y\ge4, 
\end{aligned}
\end{equation}
for $j=0,1$. 
\end{enumerate}
All the constants $C$ above depend on $\Omega$, $\alpha$ and $l$. 
\end{proposition}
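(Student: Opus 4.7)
The plan is to realize the strategy sketched in Subsection \ref{subsec.intro.key}: construct $(v^\alpha, q^\alpha)$ via the ansatz
\begin{equation*}
v^\alpha(x,y) = \sum_{\beta \le \alpha} \binom{\alpha}{\beta} x^{\alpha-\beta} V^\beta(x,y), \qquad q^\alpha(x,y) = \sum_{\beta \le \alpha} \binom{\alpha}{\beta} x^{\alpha-\beta} Q^\beta(x,y),
\end{equation*}
where the auxiliary family $\{(V^\beta, Q^\beta)\}_{\beta \le \alpha}$ is $x$-periodic and is built by induction on $|\beta|$. Plugging the ansatz into \eqref{eq.bl.alpha} and demanding that $V^0 + y^l \mathbf{e}_i = 0$ on $\Gamma$ while $V^\beta = 0$ on $\Gamma$ for every $\beta \ne 0$, one recovers exactly the recursive system \eqref{intro.eq.bl.beta}. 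The boundary value $-x^\alpha y^l \mathbf{e}_i$ of \eqref{eq.bl.alpha} is then produced automatically by the binomial identity, so the whole task reduces to solving \eqref{intro.eq.bl.beta} at every level with the stated trace conditions.

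For the base case $\beta=0$ the problem is the bounded solvability of the Stokes cell problem with data $-y^l \mathbf{e}_i$; since \eqref{cdn.thick} ensures this data lives on a strip of bounded vertical extent, existence follows from an energy construction (this is Lemma \ref{prop.bl.zero} referenced in the introduction), and the $x$-periodic solution converges exponentially to a constant as $y\to\infty$. For the inductive step, the obstruction noted in Subsection \ref{subsec.intro.key} must be handled: already at $\beta=\mathbf{e}_1$ the divergence source $-V^0_1$ has a nonzero horizontal mean surviving at $y=\infty$, so no purely periodic bounded solution exists. I would therefore decompose
\begin{equation*}
V^\beta(x,y) = V^\beta_{\rm poly}(y) + V^\beta_{\rm per}(x,y), \qquad Q^\beta(x,y) = Q^\beta_{\rm poly}(y) + Q^\beta_{\rm per}(x,y),
\end{equation*}
where $V^\beta_{\rm poly}, Q^\beta_{\rm poly}$ are polynomials in $y$ of degree at most $|\beta|$ absorbing the non-decaying horizontal averages of the right-hand sides of \eqref{intro.eq.bl.beta}, and where the remainder $(V^\beta_{\rm per}, Q^\beta_{\rm per})$ inherits exponentially decaying source terms with zero horizontal average.

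The polynomial part $(V^\beta_{\rm poly}, Q^\beta_{\rm poly})$ is constructed by taking $x$-averages in \eqref{intro.eq.bl.beta} and integrating the resulting ODE system in $y$; here one must carefully exploit the coupling of the $d$ momentum equations with the single divergence constraint, using the extra scalar freedom in $Q^\beta_{\rm poly}$, to keep the degree bounded by $|\beta|$. Once this part is subtracted, $(V^\beta_{\rm per}, Q^\beta_{\rm per})$ solves a periodic Stokes system in $\Omega$ with exponentially decaying zero-mean forcing and zero trace on $\Gamma$; this is solved by Fourier expansion in $x$ on the flat region $\{y>L\}$ (where each nonzero Fourier mode produces exponential decay at a rate controlled by $|k|\ge 1$) and by a standard periodic-cell solvability near $\Gamma$, patched together using the bumpy John geometry exactly as in the construction of the first-order corrector. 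With the inductive hypothesis supplying polynomial-in-$y$ bounds for $V^\gamma_{\rm poly}$ and exponential-in-$y$ bounds for $V^\gamma_{\rm per}$ for $\gamma<\beta$, the same bounds propagate to level $\beta$. Finally, reassembling the ansatz and splitting each $x^{\alpha-\beta}V^\beta$ into its polynomial and periodic factors yields the decomposition \eqref{est2.thm.bl.alpha} and the estimates \eqref{est1.thm.bl.alpha} and \eqref{est3.thm.bl.alpha}--\eqref{est5.thm.bl.alpha}.

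The main obstacle is the inductive step. Two points are delicate: first, the algebraic solvability of the $x$-averaged ODE system for $(V^\beta_{\rm poly}, Q^\beta_{\rm poly})$ with degree bounded by $|\beta|$, which hinges on the vectorial/pressure structure conspiring to absorb precisely the obstruction coming from the horizontal averages; and second, the quantitative propagation of exponential decay constants for $V^\beta_{\rm per}$ through the recursion \eqref{intro.eq.bl.beta}, where the $\binom{\beta_i}{1}$ and $\binom{\beta_i}{2}$ coefficients accumulate and must be tracked combinatorially to obtain the $\alpha$-dependent constant $C$ in \eqref{est4.thm.bl.alpha}--\eqref{est5.thm.bl.alpha}. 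Once both ingredients are in place, the growth bounds \eqref{est1.thm.bl.alpha}--\eqref{est3.thm.bl.alpha} are simple consequences of multiplying by the monomial factor $x^{\alpha-\beta}$ and summing.
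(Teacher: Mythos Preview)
Your proposal is correct and follows essentially the same route as the paper: the ansatz, the recursive system \eqref{intro.eq.bl.beta}, the base case via Lemma \ref{prop.bl.zero}, and the inductive splitting $V^\beta = V^\beta_{\rm poly} + V^\beta_{\rm per}$ are all exactly what the paper does. The paper's execution of the inductive step is slightly more structured than your sketch---it first builds an explicit divergence corrector $W^\beta$ (handling both the polynomial and periodic parts of $G^\beta$ via the Bogovskii operator on the periodic cell), then a separate polynomial source corrector $(\Lambda^\beta,\Pi^\beta)$, and only then applies Lax--Milgram to the reduced finite-energy problem---but this is precisely the detailed realization of the two ``delicate points'' you identified.
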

%

We will prove Proposition \ref{thm.bl.alpha} in Subsection \ref{subsec.proof.thm.bl}. The function spaces and the technical lemmas needed for the proof are collected in Subsection \ref{subsec.prelims}. In Subsection \ref{subsec.het.stokespoly}, we define the heterogeneous Stokes polynomials based on Proposition \ref{thm.bl.alpha}.

%
\subsection{Preliminaries}\label{subsec.prelims}
%
\subsubsection*{Function spaces}
Similarly to \cite{HPZ}, we will work in the function spaces on the fundamental domain
\begin{equation*}
\Omega_{{\rm p}}=\Omega\cap\big((-\pi,\pi]^{d-1}\times(-1,\infty)\big).
\end{equation*}
It is regarded as an open and connected submanifold of $\mathbb{T}^{d-1}\times\R$ where $\mathbb{T}=\R/(2\pi\Z)$ is the torus. Then the subset $\Omega_{{\rm p},<2}:=\Omega_{{\rm p}}\cap \{y<2\}$ is automatically diffeomorphic to a bounded John domain in $\R^d$, and therefore, we have the Bogovskii operator on $\Omega_{{\rm p},<2}$ by Theorem \ref{thm.bog.} below. This fact will be used in the proof of Propositions \ref{prop.bl.zero} and \ref{prop.bl.beta}.

Let us introduce the function spaces needed in this section. 
The spaces $L^2(\Omega_{\rm p})$ and $\widehat{H}^1_0(\Omega_{\rm p})$ are the completion of $C_0^\infty(\Omega_{\rm p})$ under the norms
\begin{equation*}
\|f\|_{L^2(\Omega_{\rm p})} 
:= \bigg( \int_{\Omega_{\rm p}} |f|^2 \bigg)^{1/2}, 
\qquad  \| f \|_{\widehat{H}^1(\Omega_{\rm p})} 
:= \bigg( \int_{\Omega_{\rm p}} |\nabla f|^2 \bigg)^{1/2}.
\end{equation*}
One can easily check that $\widehat{H}^1_0(\Omega_{\rm p})$ is a Hilbert space equipped with the inner product $\langle \nabla f,\nabla g\rangle_{\Omega_{{\rm p}}}$. Here and in what follows, $\langle \cdot, \cdot\rangle_{\Omega_{{\rm p}}}$ denotes 
$$\langle \varphi,\psi\rangle_{\Omega_{{\rm p}}}
:=\int_{\Omega_{{\rm p}}} \varphi\cdot \overline{\psi},$$
where $\overline{\psi}$ denotes the complex conjugate of $\psi$. 
Finally, let $\widehat{H}^1_{0,\sigma}(\Omega_{{\rm p}}):=\{ f\in \widehat{H}^1_{0}(\Omega_{{\rm p}})^d ~|~ \nabla\cdot f = 0 \}$, which is obviously a closed subspace of $\widehat{H}^1_{0}(\Omega_{{\rm p}})^d$ and thus again a Hilbert space.

We also use the cut-off function $\eta_-$ and truncation function $\eta_+$ respectively defined as
\begin{equation}\label{def.eta.m}
\begin{aligned}
&\text{$\eta_-(t)$ is non-negative,} \\
&\text{$\eta_-(t)=1$ if $t<\frac14$ and $\eta_-(t)=0$ if $t>\frac12$}, 
\end{aligned}
\end{equation}
and
\begin{equation}\label{def.eta.p}
\begin{aligned}
&\text{$\eta_+(t)$ is non-negative,} \\
&\text{$\eta_+(t)=0$ if $t<\frac14$ and $\eta_+(t)=1$ if $t>\frac12$}.
\end{aligned}
\end{equation}

\subsubsection*{Useful lemmas}
We collect the useful lemmas when proving Proposition \ref{thm.bl.alpha}. Firstly, we recall the Bogovskii operators in John domains. For a bounded open set $D\subset\R^{d}$ and $q\in(1,\infty)$, let
\begin{align*}
L^q_0(D)=\bigg\{f\in L^q(D)~\bigg|~\dashint_D f=0\bigg\}.
\end{align*}
%

%
\begin{theorem}[\cite{ADM06}]\label{thm.bog.}
Let $\Omega\subset\R^{d}$ be a bounded John domain according to Definition \ref{def.John}. Then there exists an operator $\B:L^q_0(\Omega)\to W^{1,q}_0(\Omega)^{d}$ satisfying 
\begin{align*}
\nabla\cdot \B[f]&=f \quad \mbox{in}\ \Omega, \\
\|\B[f]\|_{W^{1,q}(\Omega)} &\le C\|f\|_{L^q(\Omega)},
\end{align*}
with $C$ depending on $\Omega$ and $q$.
\end{theorem}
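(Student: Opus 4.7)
The plan is to build $\B$ by localizing the classical construction (which is available on star-shaped domains) to the pieces of a Whitney decomposition, and then gluing the local solutions together using the John structure to transport mass from each piece to a fixed central ball. This is the strategy of Acosta, Dur\'an and Muschietti. The classical building block is the following: on a bounded domain $D$ star-shaped with respect to an interior ball $B$, the Bogovskii formula $\mathcal{B}_D[g](x)=\int_D K(x,y)g(y)\,dy$ defines a bounded operator $L^q_0(D)\to W^{1,q}_0(D)^d$ satisfying $\nabla\cdot\mathcal{B}_D[g]=g$, because $\nabla K$ is a standard Calder\'on--Zygmund kernel; the bound depends only on $q$, $d$, and the ratio $\mathrm{diam}(D)/\mathrm{rad}(B)$.

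I would then fix a Whitney decomposition $\{Q_j\}_{j\in J}$ of $\Omega$ with $\mathrm{diam}(Q_j)\sim\mathrm{dist}(Q_j,\partial\Omega)$ and bounded overlap of fixed dilates, and single out a central cube $Q_*$ containing $\tilde{\mathcal X}$. Using Definition~\ref{def.John}, I assign to each $Q_j$ a chain $Q_j=Q_j^0,Q_j^1,\dots,Q_j^{N_j}=Q_*$ of neighboring Whitney cubes of comparable sizes, obtained by tracking the John path from a point of $Q_j$ to $\tilde{\mathcal X}$; the John constant $L$ controls both the chain length $N_j$ and the local geometry of each pair $Q_j^{k-1}\cup Q_j^{k}$. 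The core decomposition step is then: split $f=\sum_j f_j$ with $\mathrm{supp}\,f_j\subset Q_j$ via a smooth partition of unity, and write $f_j=g_j+\lambda_j|Q_j|^{-1}\chi_{Q_j}$ where $g_j$ has zero mean on $Q_j$ and $\lambda_j=\int_{Q_j}f_j$. Since $\sum_j\lambda_j=\int_\Omega f=0$, the remainder can be rearranged by telescoping along chains into a sum $\sum_{(Q,Q')}\phi_{Q,Q'}$ indexed over pairs of adjacent Whitney cubes, each $\phi_{Q,Q'}$ being supported in $Q\cup Q'$ with zero mean over $Q\cup Q'$. On each such pair, $Q\cup Q'$ is star-shaped with respect to a ball of comparable size, so the local operator from the first step applies. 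Setting $\B[f]$ equal to the sum of the local Bogovskii solutions applied to each $g_j$ and each $\phi_{Q,Q'}$ produces a function with $\nabla\cdot\B[f]=f$ in $\Omega$ and vanishing trace on $\partial\Omega$.

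The main obstacle is the $W^{1,q}$ bound. A naive triangle inequality fails because a single Whitney cube $Q$ serves as an intermediate link in the chains of every $Q_j$ whose John path passes through $Q$, so the contributions from many small cubes accumulate on $Q$. The decisive geometric input is a Carleson-type packing estimate: for every Whitney cube $Q$,
\[
\sum_{j:\,Q\in\mathrm{chain}(Q_j)}|Q_j|\;\le\;C_L\,|Q|.
\]
This packing follows from the Lipschitz bound $|\rho(s)-\rho(t)|\le L|s-t|$ together with $\mathrm{dist}(\rho(t),\partial\Omega)\ge t/L$, which confines the ``shadows'' of deep Whitney cubes to a cone of bounded aperture. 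Combining this packing with the local Calder\'on--Zygmund estimate on each $Q\cup Q'$ and a discrete Hardy-type summation across Whitney generations yields $\|\B[f]\|_{W^{1,q}(\Omega)}\le C\|f\|_{L^q(\Omega)}$ with $C=C(\Omega,q)$, completing the proof.
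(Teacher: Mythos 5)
The paper does not prove this statement at all: it is imported verbatim from Acosta--Dur\'an--Muschietti \cite{ADM06}, so there is no internal proof to compare against. Your sketch is essentially the argument of that reference and is correct at the level of detail given: local Bogovskii operators on star-shaped unions of adjacent Whitney cubes, telescoping of the cube means along chains tracked by the John curves, and a Carleson-type packing bound for the ``shadows'' combined with a maximal-function/discrete Hardy estimate to sum the local contributions in $W^{1,q}$. One small imprecision: the John constant does \emph{not} bound the chain lengths $N_j$ uniformly (they grow roughly like $\log(1/\ell(Q_j))$ for cubes near the boundary); what the John conditions actually give, and what the packing estimate really requires, is that every $Q_j$ whose chain passes through $Q$ has $\ell(Q_j)\lesssim\ell(Q)$ and lies in a fixed dilate of $Q$, so that disjointness of Whitney cubes yields $\sum_{j:\,Q\in\mathrm{chain}(Q_j)}|Q_j|\le C_L|Q|$ --- which is how you use it, so the argument is unaffected.
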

%

Next we consider the Stokes problem with source
\begin{equation}\tag{BL$_F$}\label{eq.bl.F}
\left\{
\begin{array}{ll}
-\Delta V + \nabla Q=F &\text{in } \Omega \\
\nabla\cdot V=0 &\text{in } \Omega \\
V=0 &\text{on } \partial\Omega.
\end{array}
\right.
\end{equation}

The following lemma gives the solution formula of $(V,Q)$ away from the boundary.

%
\begin{lemma}\label{prop.bl.F}
Let $L\ge1$. Suppose that there exists a unique weak solution $(V,Q)\in \widehat{H}^1_{0,\sigma}(\Omega_{{\rm p}})\times L^2(\Omega_{\rm p})$ of \eqref{eq.bl.F}. Assume that $F$ has the Fourier series expansion
\begin{equation}\label{assump1.prop.bl.F}
\begin{aligned}
F(x,y)
&=
\sum_{k\in\Z^{d-1}\setminus\{0\}} 
\F_k(y-L) e^{-|k|(y-L)} e^{\ii k\cdot x}, \quad y>L, 
\end{aligned}
\end{equation}
where $\F_k=\F_k(z)$ is a polynomial in $z$ with coefficients depending on $k$, such that
\begin{equation}\label{assump2.prop.bl.F}
\begin{aligned}
|\F_k(z)| + |k|^{-1} |\partial_z\F_k(z)| 
\le a_k (1+|k||z|)^{n},  
\end{aligned}
\end{equation}
for some $n\in\Z_{+}$, with the sequence $\{a_k\}_{k\in\Z^{d-1}\setminus\{0\}}$ satisfying  
\begin{equation}\label{assump3.prop.bl.F}
\begin{aligned}
\sum_{k\in\Z^{d-1}\setminus\{0\}}
|k|^{-3} |a_k|^2 
<\infty. 
\end{aligned}
\end{equation}
Then $(V,Q)$ can be decomposed into 
\begin{equation}\label{est1.prop.bl.F}
\begin{aligned}
V(x,y) &= V_{\rm poly} + V_{\rm per}(x,y), \\
Q(x,y) &= Q_{\rm per}(x,y). 
\end{aligned}
\end{equation}
The terms on the right-hand sides are described as follows: 
\begin{enumerate}[(i)]

\item $V_{\rm poly}$ is a constant vector field, such that
\begin{equation}\label{est2.prop.bl.F}
\begin{aligned}
|V_{\rm poly}| \le C \|\nabla V\|_{L^2(\Omega_{\rm p})}. 
\end{aligned}
\end{equation}

\item $V_{\rm per}(x,y)$ and $Q_{\rm per}(x,y)$ are functions of $L^2(\Omega_{\rm p})$ decaying exponentially as $y\to\infty$. These can be expanded in the Fourier series as
\begin{equation}\label{est3.prop.bl.F}
\begin{aligned}
V_{\rm per}(x,y)
&= \sum_{k\in\Z^{d-1}} 
V_k(y) e^{\ii k\cdot x}, \quad y>0, \\
Q_{\rm per}(x,y)
&= \sum_{k\in\Z^{d-1}} 
Q_k(y) e^{\ii k\cdot x}, \quad y>0, 
\end{aligned}
\end{equation}
with $V_{0}(y)$ and $Q_{0}(y)$ vanishing on $[L,\infty)$. Besides far away from the boundary, 
\begin{equation}\label{est4.prop.bl.F}
\begin{aligned}
V_{\rm per}(x,y)
&= \sum_{k\in\Z^{d-1}\setminus\{0\}} 
\V_k(y-L) e^{-|k|(y-L)} e^{\ii k\cdot x}, \quad y>L, \\
Q_{\rm per}(x,y)
&= \sum_{k\in\Z^{d-1}\setminus\{0\}} 
\Q_k(y-L) e^{-|k|(y-L)} e^{\ii k\cdot x}, \quad y>L, 
\end{aligned}
\end{equation}
where $\V_k=\V_k(z)$ and $\Q_k=\Q_k(z)$ are polynomials in $z$ with coefficients depending on $k$, such that 
\begin{equation}\label{est5.prop.bl.F}
\begin{aligned}
|\V_k(z)| 
&\le
b_k (1+|k||z|)^{n+2}, \\
|k|^{-1} (|\partial_z\V_k(z)| + |\Q_k(z)|)
&\le b_k (1+|k||z|)^{n+1}, \\
|k|^{-2} |\partial_z^2\V_k(z)|
+ |k|^{-3} |\partial_z^3\V_k(z)| 
&\le b_k (1+|k||z|)^{n+1}, \\
|k|^{-2} |\partial_z\Q_k(z)| 
+ |k|^{-3} |\partial_z^2\Q_k(z)| 
&\le b_k (1+|k||z|)^{n},
\end{aligned}
\end{equation}
with the sequence $\{b_k\}_{k\in\Z^{d-1}\setminus\{0\}}$ satisfying  
\begin{equation}\label{est6.prop.bl.F}
\begin{aligned}
\sum_{k\in\Z^{d-1}\setminus\{0\}}
|k| |b_k|^2 
\le C\bigg(\|\nabla V\|_{L^2(\Omega_{\rm p})}^2 
+ \sum_{k\in\Z^{d-1}\setminus\{0\}}
|k|^{-3} |a_k|^2 \bigg)
<\infty. 
\end{aligned}
\end{equation}

\end{enumerate}
All the constants $C$ above depend on $\Omega$, $L$ and $n$. 
\end{lemma}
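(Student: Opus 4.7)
The plan is to work in the flat cylinder $\Omega_{\rm p} \cap \{y > L\} \simeq \mathbb{T}^{d-1} \times (L, \infty)$ and exploit the periodicity in $x$ via the Fourier expansion $V(x,y)=\sum_{k\in\Z^{d-1}} V_k(y)e^{\ii k\cdot x}$ and $Q(x,y)=\sum_{k\in\Z^{d-1}} Q_k(y)e^{\ii k\cdot x}$. The Stokes PDE then reduces, on each mode $k$, to a one-dimensional Stokes system for $(V_k(y), Q_k(y))$ on $(L, \infty)$, which can be analyzed explicitly. The two regimes $k=0$ and $k\ne 0$ give the $V_{\rm poly}$ and the $(V_{\rm per},Q_{\rm per})$ parts, respectively.

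For the zero mode $k=0$, the assumption \eqref{assump1.prop.bl.F} says $F$ has no zero mode on $\{y > L\}$, so averaging \eqref{eq.bl.F} in $x$ over $[-\pi,\pi]^{d-1}$ yields, on $(L,\infty)$, the ODEs $-\partial_y^2 V_0 + \partial_y Q_0\,{\bf e}_d = 0$ and $\partial_y V_0^{(d)} = 0$. Combined with $\nabla V \in L^2(\Omega_{\rm p})$, these force $V_0$ and $Q_0$ to be constant on $(L,\infty)$, and the integrability $Q \in L^2(\Omega_{\rm p})$ on the cylinder of infinite $y$-extent forces $Q_0 \equiv 0$ there. Setting $V_{\rm poly} := V_0 \big|_{(L, \infty)}$ gives the constant part, and \eqref{est2.prop.bl.F} follows from a trace inequality at the slice $y=L+\tfrac12$ combined with Poincar\'e on the bounded sub-strip $\Omega_{\rm p} \cap \{y < L+1\}$, valid since $V=0$ on $\partial\Omega$.

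For each nonzero mode $k \neq 0$, applying the horizontal divergence $\ii k\cdot$ to the Fourier-transformed momentum equations together with the divergence constraint reduces $Q_k$ to a scalar second-order ODE $(\partial_y^2 - |k|^2) Q_k = \Phi_k(y-L) e^{-|k|(y-L)}$, with $\Phi_k$ a polynomial of degree $\le n$ bounded by $C|k| a_k (1+|k||z|)^n$ (from \eqref{assump2.prop.bl.F}). Seeking $Q_k^{\rm part} = \pi_k(y-L) e^{-|k|(y-L)}$ with $\pi_k$ polynomial gives the recursion $\pi_k'' - 2|k|\pi_k' = \Phi_k$, solved by coefficient matching to produce $\pi_k$ of degree $\le n+1$. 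Substituting back into the momentum equations decouples them into scalar second-order linear ODEs with exponential-polynomial sources, yielding a particular velocity $V_k^{\rm part}$ of the same form with polynomial prefactor of degree $\le n+2$. The decaying homogeneous solutions also have exponential-polynomial form, with polynomial prefactors of degree $\le 1$ reflecting the repeated-root structure of the Stokes symbol at $e^{-|k|y}$; the appropriate combination is added to match the trace of $V - V_{\rm poly}$ at $y=L$ consistently with the divergence-free condition. This delivers \eqref{est3.prop.bl.F}, \eqref{est4.prop.bl.F}, and the pointwise bounds \eqref{est5.prop.bl.F}. Finally, Plancherel gives $\|\nabla V\|_{L^2(\Omega_{\rm p} \cap \{y > L\})}^2 \gtrsim \sum_{k \neq 0} |k|\, |b_k|^2$ for the homogeneous correctors, while the particular-solution contribution is absorbed by $\sum_{k\neq 0} |k|^{-3} |a_k|^2$ coming from the $L^2$ norm of the source, yielding \eqref{est6.prop.bl.F}.

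The main obstacle is the bookkeeping of the polynomial-coefficient recursions inside each Fourier mode: the Stokes coupling between $V_k$ and $Q_k$, combined with the ansatz of a polynomial in $y-L$ times $e^{-|k|(y-L)}$, produces upper-triangular algebraic systems whose inversion introduces various powers of $|k|^{-1}$ that must be tracked precisely against the weighted estimates \eqref{est5.prop.bl.F} on $\V_k$, $\Q_k$ and their derivatives up to orders three and two, respectively. Controlling the homogeneous correctors by the trace of $V - V_{\rm poly}$ at $y = L$ and absorbing that trace into $\|\nabla V\|_{L^2(\Omega_{\rm p})}$ via a sharp $\ell^2$-in-$k$ summation is the other delicate step.
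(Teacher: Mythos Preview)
Your proposal is correct and follows essentially the same approach as the paper: Fourier expand in $x$, reduce to ODEs in $y$ on $\{y>L\}$ for each mode, identify $V_{\rm poly}$ as the zero-mode constant, solve the $k\neq 0$ modes via polynomial-times-exponential ans\"atze matched to the trace $V(\cdot,L)$, and control that trace in $\ell^2_k$ by $\|\nabla V\|_{L^2(\Omega_{\rm p})}$ to obtain \eqref{est6.prop.bl.F}. The paper carries this out by writing down explicit integral formulas for $\ul{\Q}_k$, $\ul{\V}_k$ (a particular solution) and a constant $c_k$ fixing the homogeneous part, then setting $b_k=C(|\hat V_k|+|k|^{-2}a_k)$; your recursion/undetermined-coefficients description is an equivalent packaging of the same computation.
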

%
%
\begin{proof}
Since $(V,Q)\in \widehat{H}^1(\Omega_{{\rm p}})^{d}\times L^2(\Omega_{\rm p})$, one has the Fourier series expansion in $y>0$: 
\begin{equation}\label{est1.proof.prop.bl.F}
\begin{aligned}
V(x,y)
= \sum_{k\in\Z^{d-1}} 
\widetilde{V}_k(y) e^{\ii k\cdot x}, \qquad 
Q(x,y)
= \sum_{k\in\Z^{d-1}} 
\widetilde{Q}_k(y) e^{\ii k\cdot x}. 
\end{aligned}
\end{equation}
However, to prove the claim, we need a more precise formula for $(V,Q)$ on $\{y>L\}$. Setting $b(x)=V(x,L)$, we see that the restriction $(V,Q)=(V|_{\{y>L\}},Q|_{\{y>L\}})$ solves 
\begin{equation}\label{eq1.proof.prop.bl.F}
\left\{
\begin{array}{ll}
-\Delta V + \nabla Q=F|_{\{y>L\}}, \quad y>L, \\
\nabla\cdot V=0, \quad y>L, \\
V(x,L)=b(x). 
\end{array}
\right.
\end{equation}
Note that $b(x)$ is periodic and thus expanded as 
\begin{equation*}
b(x) = \sum_{k\in\Z^{d-1}} \hat{V}_k e^{\ii k\cdot x}, \quad 
\hat{V}_k := \widetilde{V}_k(L) = \frac{1}{(2\pi)^{d-1}} \int_{(-\pi,\pi)^{d-1}} V(x,L) e^{\ii k\cdot x} \dd x. 
\end{equation*}
Moreover, from $V\in \widehat{H}^1(\Omega_{{\rm p}})^{d}$ and the Poincar\'{e} inequality, one has 
\begin{equation}\label{est2.proof.prop.bl.F}
|\hat{V}_0| + \bigg(\sum_{k\in\Z^{d-1}\setminus\{0\}} |k| |\hat{V}_k|^2\bigg)^{1/2} 
\le C\|\nabla V\|_{L^2(\Omega_{\rm p})}
\end{equation}
with a constant $C$ depending on $L$.

We will derive the explicit representation of $(V,Q)$ in \eqref{eq1.proof.prop.bl.F} using the assumption \eqref{assump1.prop.bl.F}. This is done by first solving $\Delta Q = \nabla\cdot F$ including an undetermined coefficient, and then by solving \eqref{eq1.proof.prop.bl.F} regarding $-\nabla Q + F$ as a given external force. The undetermined coefficients are found thanks to the solenoidal and boundary conditions in \eqref{eq1.proof.prop.bl.F}. Although the computation is not short, the argument itself is elementary, as all the equations are reduced to ODEs in the variable $y$ by the periodicity in $x$. For this reason, we avoid giving the details here, state only the resulting formula and provide the verification in Appendix. Define the (scalar) polynomial
\begin{equation*}
\begin{aligned}
\ul{\Q}_k(z) 
&= -\frac{1}{2|k|} \int_{0}^{z} 
\bigg\{
\begin{bmatrix} \ii k \\ -|k| \\ \end{bmatrix}\cdot \F_k(w)
+ (\partial_z\F_k)_d(w) 
\bigg\} \dd w \\
&\quad
-\frac{1}{2|k|} \int_{z}^{\infty} 
\bigg\{
\begin{bmatrix} \ii k \\ -|k| \\ \end{bmatrix}\cdot \F_k(w)
+ (\partial_z\F_k)_d(w) 
\bigg\}
e^{2|k|(z-w)} \dd w, 
\end{aligned}
\end{equation*}
the vector-valued polynomial
\begin{equation*}
\begin{aligned}
\ul{\V}_k(z) 
&= -\frac{1}{2|k|} \int_{0}^{z} 
\bigg\{-\F_k(w)
+ \begin{bmatrix} \ii k \\ -|k| \\ \end{bmatrix} \ul{\Q}_k(w)
+ \begin{bmatrix} 0 \\ \partial_z\ul{\Q}_k(w) \\ \end{bmatrix}\bigg\} \dd w \\
&\quad
-\frac{1}{2|k|} \int_{z}^{\infty} 
\bigg\{-\F_k(w)
+ \begin{bmatrix} \ii k \\ -|k| \\ \end{bmatrix} \ul{\Q}_k(w)
+ \begin{bmatrix} 0 \\ \partial_z\ul{\Q}_k(w) \\ \end{bmatrix}\bigg\} 
e^{2|k|(z-w)} \dd w, 
\end{aligned}
\end{equation*}
and the constant
\begin{equation}\label{def.ck}
\begin{aligned}
c_k=
\begin{bmatrix} \ii k \\ - |k| \\ \end{bmatrix}\cdot \hat{V}_k 
+ (\partial_z\underline{\V}_k)_d(0). 
\end{aligned}
\end{equation}
Then, $(V,Q)$ is represented as
\begin{equation}\label{est3.proof.prop.bl.F}
\begin{aligned}
V(x,y)
&= \hat{V}_0
+ \sum_{k\in\Z^{d-1}\setminus\{0\}} 
\V_k(y-L) e^{-|k|(y-L)} e^{\ii k\cdot x}, \quad y>L, \\
Q(x,y)
&= \sum_{k\in\Z^{d-1}\setminus\{0\}} 
\Q_k(y-L) e^{-|k|(y-L)} e^{\ii k\cdot x}, \quad y>L, 
\end{aligned}
\end{equation}
where the coefficient $(\V_k,\Q_k)=(\V_k(z),\Q_k(z))$ is explicitly given by 
\begin{equation}\label{est4.proof.prop.bl.F}
\begin{aligned}
\V_k(z) 
&=
\hat{V}_k 
+ \frac{c_k}{|k|} \begin{bmatrix} \ii k \\ -|k| \\ \end{bmatrix} z
+ \ul{\V}_k(z) - \ul{\V}_k(0), \\
\Q_k(z)
&=-2c_k + \ul{\Q}_k(z). 
\end{aligned}
\end{equation}

Let us verify \eqref{est1.prop.bl.F}--\eqref{est6.prop.bl.F}. The decomposition \eqref{est1.prop.bl.F} follows if we set 
\begin{equation*}
\begin{aligned}
V_{\rm poly} = \hat{V}_0, \qquad V_{\rm per}=V-V_{\rm poly}, \qquad Q_{\rm per}=Q. 
\end{aligned}
\end{equation*}
The bound \eqref{est2.prop.bl.F} is a consequence of \eqref{est2.proof.prop.bl.F}. Next we set, using $(\widetilde{V}_k(y),\widetilde{Q}_k(y))$ in \eqref{est1.proof.prop.bl.F}, 
\begin{equation*}
\begin{aligned}
V_0(y) = \widetilde{V}_0(y) - V_{\rm poly}, \qquad 
V_k(y) = \widetilde{V}_k(y) \quad \text{for $k\neq0$} 
\end{aligned}
\end{equation*}
and $Q(y)=\widetilde{Q}_k(y)$. Then, the formulas \eqref{est3.prop.bl.F} and \eqref{est4.prop.bl.F} are deduced thanks to \eqref{est3.proof.prop.bl.F} as well as to the absence of the zero-mode in $V-V_{\rm poly}$ for $y>L$.

It remains to prove \eqref{est5.prop.bl.F} and \eqref{est6.prop.bl.F}. Using the assumption \eqref{assump2.prop.bl.F}, we compute 
\begin{equation*}
\begin{aligned}
|k|^2 |\ul{\V}_k(z)| 
&\le C a_k (1+|k||z|)^{n+2}, \\
|k| (|\ul{\Q}_k(z)| + |\partial_z\ul{\V}_k(z)|)
&\le C a_k (1+|k||z|)^{n+1}, \\ 
|\partial_z^2\ul{\V}_k(z)|
+ |k|^{-1} |\partial_z^3\ul{\V}_k(z)| 
&\le C a_k (1+|k||z|)^{n+1}, \\ 
|\partial_z\ul{\Q}_k(z)|
+ |k|^{-1} |\partial_z^2\ul{\Q}_k(z)|
&\le C a_k (1+|k||z|)^{n}, \\
|k|^{-1} |c_k|
&\le C (|\hat{V}_k| + |k|^{-2} a_k).
\end{aligned}
\end{equation*}
Combined with \eqref{est4.proof.prop.bl.F}, this implies \eqref{est5.prop.bl.F} with the sequence $\{b_k\}_{k\in\Z^{d-1}\setminus\{0\}}$ defined by 
\begin{equation*}
\begin{aligned}
b_k = C(|\hat{V}_k| + |k|^{-2} a_k). 
\end{aligned}
\end{equation*}
The estimate \eqref{est6.prop.bl.F} for $b_k$ follows from \eqref{est2.proof.prop.bl.F} and \eqref{assump3.prop.bl.F}. This completes the proof. 
\end{proof}
%

Finally, we consider the Stokes equations with bounded boundary data: 
\begin{equation}\tag{BL$^{0}$}\label{eq.bl.zero}
\left\{
\begin{array}{ll}
-\Delta V + \nabla Q=0 &\text{in } \Omega \\
\nabla\cdot V=0 &\text{in } \Omega \\
V(x,y)+y^l{\bf e}_i=0 &\text{on } \Gamma.
\end{array}
\right.
\end{equation}

%
\begin{lemma}\label{prop.bl.zero}
Let $l\in\Z_{+}$ and $i\in\{1,\cdots,d\}$. There exists a unique weak solution $(V,Q)\in \widehat{H}^1(\Omega_{{\rm p}})^d\times L^2(\Omega_{\rm p})$ of \eqref{eq.bl.zero}. Moreover, $V$ and $Q$ can be decomposed into 
\begin{equation}\label{est1.prop.bl.zero}
\begin{aligned}
V(x,y) &= V_{\rm poly} + V_{\rm per}(x,y), \\
Q(x,y) &= Q_{\rm per}(x,y). 
\end{aligned}
\end{equation}
The terms on the right-hand sides are described as follows: 
\begin{enumerate}[(i)]

\item $V_{\rm poly}$ is a constant vector field, such that
\begin{equation}\label{est2.prop.bl.zero}
\begin{aligned}
|V_{\rm poly}| \le C. 
\end{aligned}
\end{equation}

\item $V_{\rm per}(x,y)$ and $Q_{\rm per}(x,y)$ are functions of $L^2(\Omega_{\rm p})$ decaying exponentially as $y\to\infty$, such that 
\begin{equation}\label{est3'.prop.bl.beta}
\begin{aligned}
\|V_{\rm per}\|_{H^1(\Omega_{\rm p})}
+ \|Q_{\rm per}\|_{L^2(\Omega_{\rm p})} \le C. 
\end{aligned}
\end{equation}
These can be expanded in the Fourier series as
\begin{equation}\label{est3.prop.bl.zero}
\begin{aligned}
V_{\rm per}(x,y)
&= \sum_{k\in\Z^{d-1}} 
V_k(y) e^{\ii k\cdot x}, \quad y>0, \\
Q_{\rm per}(x,y)
&= \sum_{k\in\Z^{d-1}} 
Q_k(y) e^{\ii k\cdot x}, \quad y>0, 
\end{aligned}
\end{equation}
with $V_{0}(y)$ and $Q_{0}(y)$ vanishing on $[3,\infty)$. Besides far away from the boundary, 
\begin{equation}\label{est4.prop.bl.zero}
\begin{aligned}
V_{\rm per}(x,y)
&= \sum_{k\in\Z^{d-1}\setminus\{0\}} 
\V_k(y-3) e^{-|k|(y-3)} e^{\ii k\cdot x}, \quad y>3, \\
Q_{\rm per}(x,y)
&= \sum_{k\in\Z^{d-1}\setminus\{0\}} 
\Q_k(y-3) e^{-|k|(y-3)} e^{\ii k\cdot x}, \quad y>3, 
\end{aligned}
\end{equation}
where $\V_k=\V_k(z)$ is a polynomial of degree $1$ in $z$ with coefficients depending on $k$, and $\Q_k$ is a constant depending on $k$, such that 
\begin{equation}\label{est5.prop.bl.zero}
\begin{aligned}
|\V_k(z)| 
&\le
b_k (1+|k||z|), \\
|k|^{-1} (|\partial_z\V_k(z)| + |\Q_k(z)|)
&\le b_k, 
\end{aligned}
\end{equation}
with the sequence $\{b_k\}_{k\in\Z^{d-1}\setminus\{0\}}$ satisfying  
\begin{equation}\label{est6.prop.bl.zero}
\begin{aligned}
\sum_{k\in\Z^{d-1}\setminus\{0\}}
|k| |b_k|^2
\le C. 
\end{aligned}
\end{equation}
All the constants $C$ above depend on $\Omega$ and $l$.  
\end{enumerate}

\end{lemma}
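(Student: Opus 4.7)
\textbf{The plan} is to reduce \eqref{eq.bl.zero} to a homogeneous Stokes problem on the periodic cell $\Omega_{\rm p}$ via a divergence-free lift of the boundary data, solve it by Lax--Milgram on $\widehat{H}^1_{0,\sigma}(\Omega_{\rm p})$, and then extract the structural decomposition \eqref{est1.prop.bl.zero}--\eqref{est6.prop.bl.zero} by invoking Lemma~\ref{prop.bl.F} with vanishing source.

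First I would construct an extension $\Phi\in H^1_{\rm loc}(\overline{\Omega})^d$ such that $\Phi=-y^l\mathbf{e}_i$ on $\Gamma$, $\Phi$ is supported in $\{y\le 2\}$, and $\nabla\cdot\Phi=0$ in $\Omega$. Set $\Phi_0(y):=-\eta_-(y)y^l\mathbf{e}_i$ with $\eta_-$ from \eqref{def.eta.m}. When $i\ne d$, $\Phi_0$ depends only on $y$, so $\nabla\cdot\Phi_0=0$ and we take $\Phi:=\Phi_0$. When $i=d$, $g:=\nabla\cdot\Phi_0=-\partial_y(\eta_-(y)y^l)$ is supported in $\{0\le y\le 1/2\}$ and has period-cell mean $\bar g=-|\Omega_{\rm p}|^{-1}\int_{\Gamma\cap \Omega_{\rm p}}y^l(\mathbf{e}_d\cdot n)\,d\sigma$ by the divergence theorem. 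I would absorb the mean by adding a simple ``flux'' term supported above $y=1$ (e.g. a periodic-in-$x$ modification of $\bar g\,\eta_+(y-1)\mathbf{e}_d$ with $\eta_+$ from \eqref{def.eta.p}) and correct the remaining mean-zero defect by the Bogovskii operator from Theorem~\ref{thm.bog.} applied on the bounded John domain $\Omega_{{\rm p},<2}$. The resulting $\Phi$ is divergence-free, compactly supported in $y$, and satisfies $\|\Phi\|_{H^1(\Omega_{\rm p})}\le C$.

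Writing $V=W+\Phi$ and $Q=Q$ reduces \eqref{eq.bl.zero} to seeking $W\in \widehat{H}^1_{0,\sigma}(\Omega_{\rm p})$ with
\begin{equation*}
\langle\nabla W,\nabla \phi\rangle_{\Omega_{\rm p}}
=-\langle\nabla\Phi,\nabla \phi\rangle_{\Omega_{\rm p}}
\qquad\text{for all }\phi\in \widehat{H}^1_{0,\sigma}(\Omega_{\rm p}).
\end{equation*}
Since $\Phi$ is supported in a bounded-in-$y$ strip, the Poincar\'e inequality on that strip shows the right-hand side defines a bounded linear functional, and Lax--Milgram produces a unique $W$ with $\|\nabla W\|_{L^2(\Omega_{\rm p})}\le C\|\nabla\Phi\|_{L^2(\Omega_{\rm p})}\le C$. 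The pressure $Q\in L^2(\Omega_{\rm p})$ is then recovered by the de Rham theorem using the Bogovskii operator of Theorem~\ref{thm.bog.} on $\Omega_{{\rm p},<2}$, yielding $\|Q\|_{L^2(\Omega_{\rm p})}\le C$. Uniqueness of $(V,Q)$ (up to an additive constant in $Q$) is immediate by testing the homogeneous version of the reduced equation against $W$ itself.

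With $(V,Q)$ in hand and $\|\nabla V\|_{L^2(\Omega_{\rm p})}\le C$, the decomposition and all Fourier-mode estimates follow by applying Lemma~\ref{prop.bl.F} directly with $F\equiv 0$, $n=0$, and $L=3$; note that $\Phi$ vanishes for $y>2$, so $V$ solves the homogeneous Stokes system there and the hypotheses are trivially met. In that application, $\underline{\V}_k$ and $\underline{\Q}_k$ vanish, so \eqref{est4.proof.prop.bl.F} reduces to $\V_k(z)=\hat V_k+(c_k/|k|)\,[ik,-|k|]^\top z$ (linear in $z$) and $\Q_k(z)=-2c_k$ (a constant), which is exactly the content of \eqref{est4.prop.bl.zero}--\eqref{est5.prop.bl.zero}. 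The summability \eqref{est6.prop.bl.zero} and the constant bound \eqref{est2.prop.bl.zero} for $V_{\rm poly}=\hat V_0$ then follow from \eqref{est6.prop.bl.F} and \eqref{est2.prop.bl.F} together with the energy bound, and the zero-mode vanishing of $V_{\rm per}$ on $[3,\infty)$ is built into the lemma's conclusion.

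\textbf{The main obstacle} I anticipate is constructing the divergence-free lift when $i=d$ and the flux $\bar g$ through $\Gamma$ per period is nonzero: one cannot simply take a function of $y$, and the bumpy John geometry precludes an obvious tensor-product construction. This forces one to combine a linear-in-$y$ flux carrier in the upper cell with a Bogovskii correction on $\Omega_{{\rm p},<2}$, and to verify that both pieces respect the $x$-periodicity (the latter being automatic since $\Omega_{{\rm p},<2}$ is the fundamental cell). Everything else is standard once the lift is in place and Lemma~\ref{prop.bl.F} is available.
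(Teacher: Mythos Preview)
Your approach is essentially identical to the paper's: build a divergence-free lift from a cutoff, a flux carrier, and a Bogovskii correction on $\Omega_{{\rm p},<2}$, solve the reduced problem by Lax--Milgram, and then read off the Fourier structure for $y>3$ from (the proof of) Lemma~\ref{prop.bl.F} with $\mathcal F_k\equiv 0$. Two small slips to fix: for $i=d$ the flux carrier $\bar g\,\eta_+(y-1)\mathbf{e}_d$ makes $\Phi$ constant (not zero) for large $y$, so only $\nabla\Phi$ is compactly supported and $\|\Phi\|_{H^1(\Omega_{\rm p})}=\infty$; consequently you should apply Lemma~\ref{prop.bl.F} to $(W,Q)\in\widehat H^1_{0,\sigma}(\Omega_{\rm p})\times L^2(\Omega_{\rm p})$ with source $\Delta\Phi$ (which vanishes for $y>2$) rather than to $(V,Q)$ directly, and then absorb the constant tail of $\Phi$ into $V_{\rm poly}$.
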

%
%
\begin{proof}
We use the cut-off $\eta_-$ in \eqref{def.eta.m} and the truncation $\eta_+$ in \eqref{def.eta.p}. Let us define the smooth vector field $\widetilde{U}$ and the constant $A$ depending on $i\in\{1,\cdots,d\}$ by 
\begin{equation}
\begin{aligned}
\widetilde{U}(y) =y^l \eta_-(y) {\bf e}_i, 
\qquad A=\frac{1}{(2\pi)^{d-1}}\int_{\Omega_{{\rm p},<2}} 
\nabla\cdot\widetilde{U}. 
\end{aligned}
\end{equation}

We focus only on the case $i=d$ here since the other cases are easier. Let us introduce the pair of functions $(U,P)\in \widehat{H}^1(\Omega_{{\rm p}})^d\times L^2(\Omega_{\rm p})$ defined by 
\begin{equation}
\begin{aligned}
U = \widetilde{U} - \B[\nabla\cdot \widetilde{U}-A\partial_y\eta_+(y)] - A \eta_+(y) {\bf e}_{d}, \qquad
P = 0. 
\end{aligned}
\end{equation}
Here $\B$ is the Bogovskii operator on $\Omega_{{\rm p},<2}$ and $\B[\nabla\cdot \widetilde{U}-A\partial_y\eta_+(y)]$ has been extended by zero from $\Omega_{p,<2}$ to $\Omega_p$. Note that the average of $\nabla\cdot \widetilde{U}-A\partial_y\eta_+(y)$ over $\Omega_{{\rm p},<2}$ is zero, and hence, that $\nabla\cdot U=0$. Then $(\widetilde{V},\widetilde{Q}):=(V-U,Q-P)$ is a solution to the Stokes problem \eqref{eq.bl.F} with the source $F:=-\Delta U$. By the Lax-Milgram lemma, there is a unique weak solution $(\widetilde{V},\widetilde{Q})$ to this problem, which implies the existence of solutions to \eqref{eq.bl.zero}. The uniqueness of the weak solution of \eqref{eq.bl.zero} is obvious by a simple energy estimate.

Following the proof of Lemma \ref{prop.bl.F} and using that $F$ is identically zero when $y>3$, one can obtain the Fourier series formula for $(\widetilde{V},\widetilde{Q})$.  Moreover, all the statements \eqref{est1.prop.bl.zero}--\eqref{est6.prop.bl.zero} can be provided in the same manner as in the proof of Lemma \ref{prop.bl.F}. We omit the details to avoid repetitive arguments. This concludes the proof. 
\end{proof}
%

%
\subsection{Proof of Proposition \ref{thm.bl.alpha}}\label{subsec.proof.thm.bl}
This subsection is devoted to the proof of Proposition \ref{thm.bl.alpha}. We divide it into two parts.

%
\begin{proofx}{Proposition \ref{thm.bl.alpha} (first half)}
We will find a solution of \eqref{eq.bl.alpha} under an ansatz 
\begin{equation}\label{ansatz}
\begin{aligned}
v^{\alpha}(x,y) &= 
\sum_{\beta\le\alpha} 
\dbinom{\alpha}{\beta} x^{\alpha-\beta} V^{\beta}(x,y), \qquad
q^{\alpha}(x,y) &= \sum_{\beta\le\alpha} 
\dbinom{\alpha}{\beta} x^{\alpha-\beta} Q^{\beta}(x,y),
\end{aligned}
\end{equation}
where $(V^{0},Q^{0})$ is the solution to \eqref{eq.bl.zero} in the previous subsection. If $(V^{\beta},Q^{\beta})$ with $\beta\neq0$ is constructed so that $V^{\beta}$ vanishes on the boundary $\partial\Omega$ for all $\beta\le\alpha$, the boundary condition in \eqref{eq.bl.alpha} for $v^{\alpha}$ is recovered from \eqref{ansatz}. Thus we will find such $(V^{\beta},Q^{\beta})$ making the pair $(v^{\alpha},q^{\alpha})$ in \eqref{ansatz} to be a weak solution of \eqref{eq.bl.alpha}.

Let us derive the equations which $(V^{\beta},Q^{\beta})$ must satisfy. In what follows, in order to avoid complexity, {\it all notations with multi-indices in the subscript are promised to be zero if there is a multi-index belonging to $\Z^d\setminus\Z_{\ge0}^d$.} For example, $V^{\gamma}=0$ if $\gamma\in\Z^d\setminus\Z_{\ge0}^d$. Then a direct computation yields that 
\begin{equation*}
\begin{aligned}
\Delta v^{\alpha}
&= 
\sum_{\beta\le\alpha}
x^{\alpha-\beta}
\Bigg(
\sum_{i=1}^{d-1}
\dbinom{\alpha}{\beta-2{\bf e}_i}
(\alpha_i-\beta_i+2)(\alpha_i-\beta_i+1) 
V^{\beta-2{\bf e}_i} \\
&\qquad\qquad\qquad\quad
+ \sum_{i=1}^{d-1}
2\dbinom{\alpha}{\beta-{\bf e}_i}
(\alpha_i-\beta_i+1)
\partial_i V^{\beta-{\bf e}_i} 
+ \dbinom{\alpha}{\beta} \Delta V^{\beta}
\Bigg) \\
&= 
\sum_{\beta\le\alpha}
\dbinom{\alpha}{\beta} x^{\alpha-\beta}
\Bigg(
\Delta V^{\beta}
+ \sum_{i=1}^{d-1}
\Big( 
2\dbinom{\beta_i}{2} 
V^{\beta-2{\bf e}_i}
+ 2\dbinom{\beta_i}{1} 
\partial_i V^{\beta-{\bf e}_i}
\Big)
\Bigg)
\end{aligned}
\end{equation*}
and
\begin{equation*}
\begin{aligned}
\nabla q^{\alpha} 
&= 
\sum_{\beta\le\alpha}
x^{\alpha-\beta}
\Bigg(
\sum_{i=1}^{d-1}
\dbinom{\alpha}{\beta-{\bf e}_i}
(\alpha_i-\beta_i+1) 
Q^{\beta-{\bf e}_i} {\bf e}_i
+ \dbinom{\alpha}{\beta} 
\nabla Q^{\beta}
\Bigg) \\
&= 
\sum_{\beta\le\alpha}
\dbinom{\alpha}{\beta} x^{\alpha-\beta}
\Bigg(
\nabla Q^{\beta}
+ \sum_{i=1}^{d-1}
\dbinom{\beta_i}{1} 
Q^{\beta-{\bf e}_i} {\bf e}_i
\Bigg), 
\end{aligned}
\end{equation*}
as well as
\begin{equation*}
\begin{aligned}
\nabla\cdot v^{\alpha}
&= 
\sum_{\beta\le\alpha}
x^{\alpha-\beta}
\Bigg(
\sum_{i=1}^{d-1}
\dbinom{\alpha}{\beta-{\bf e}_i}
(\alpha_i-\beta_i+1)
(V^{\beta-{\bf e}_i})_i
+ \dbinom{\alpha}{\beta} \nabla\cdot V^{\beta}
\Bigg) \\
&= 
\sum_{\beta\le\alpha}
\dbinom{\alpha}{\beta} x^{\alpha-\beta}
\Bigg(
\nabla\cdot V^{\beta}
+ \sum_{i=1}^{d-1}
\dbinom{\beta_i}{1} 
(V^{\beta-{\bf e}_i})_i
\Bigg). 
\end{aligned}
\end{equation*}
Since $(v^{\alpha},q^{\alpha})$ has to be a solution of \eqref{eq.bl.alpha}, collecting the terms by the powers of $x$, we find that $(V^{\beta},Q^{\beta})$ should satisfy the following Stokes system 
\begin{equation}\tag{BL$^{\beta}$}\label{eq.bl.beta}
\left\{
\begin{array}{ll}
-\Delta V^{\beta} + \nabla Q^{\beta}=F^{\beta} &\text{in } \Omega \\
\nabla\cdot V^{\beta}=G^{\beta} &\text{in } \Omega \\
V^{\beta}=0 &\text{on } \partial\Omega
\end{array}
\right.
\end{equation}
with the data $(F^{\beta},G^{\beta})$ recursively defined by 
\begin{equation}\label{def.FG.beta1}
\begin{aligned}
F^{\beta}
&=
\sum_{i=1}^{d-1}
\Big(
2\dbinom{\beta_i}{2} 
V^{\beta-2{\bf e}_i}
+ 2\dbinom{\beta_i}{1} 
\partial_i V^{\beta-{\bf e}_i}
- \dbinom{\beta_i}{1} 
Q^{\beta-{\bf e}_i} {\bf e}_i
\Big), \\
G^{\beta}
&=
-\sum_{i=1}^{d-1}
\dbinom{\beta_i}{1} 
(V^{\beta-{\bf e}_i})_i.
\end{aligned}
\end{equation}

One can find a solution to \eqref{eq.bl.beta} by an induction argument, as exhibited next.
\end{proofx}
%

Before continuing the proof of Proposition \ref{thm.bl.alpha}, we state the existence and estimates of solutions of \eqref{eq.bl.beta} above as an independent proposition.

%
\begin{proposition}\label{prop.bl.beta}
Let $\beta\in\Z_{\ge0}^{d-1}$. There exists a weak solution $(V^{\beta},Q^{\beta})\in H^1_{{\rm loc}}(\overline{\Omega})^{d}\times L^2_{{\rm loc}}(\overline{\Omega})$ of \eqref{eq.bl.beta} and of \eqref{eq.bl.zero} when $\beta=0$. Moreover, $(V^{\beta},Q^{\beta})$ can be decomposed into 
\begin{equation}\label{est3.prop.bl.beta}
\begin{aligned}
V^{\beta}(x,y) &= V^{\beta}_{\rm poly}(y) + V^{\beta}_{\rm per}(x,y), \\
Q^{\beta}(x,y) &= Q^{\beta}_{\rm poly}(y) + Q^{\beta}_{\rm per}(x,y). 
\end{aligned}
\end{equation}
The terms on the right-hand sides are described as follows: 
\begin{enumerate}[(i)]
\item $V^{\beta}_{\rm poly}(y)$ and $Q^{\beta}_{\rm poly}(y)$ are polynomials in $y$, such that
\begin{equation}\label{est4.prop.bl.beta}
\begin{aligned}
|V^{\beta}_{\rm poly}(y)| 
&\le C(1+|y|)^{|\beta|}, \\
|\partial_y V^{\beta}_{\rm poly}(y)| 
+ |Q^{\beta}_{\rm poly}(y)| 
&\le C(1+|y|)^{|\beta|-1}. 
\end{aligned}
\end{equation}

\item $V^{\beta}_{\rm per}(x,y)$ and $Q^{\beta}_{\rm per}(x,y)$ are functions of $L^2(\Omega_{\rm p})$ decaying exponentially as $y\to\infty$, such that 
\begin{equation}\label{est6.prop.bl.beta}
\begin{aligned}
\|V^{\beta}_{\rm per}\|_{H^1(\Omega_{\rm p})}
+ \|Q^{\beta}_{\rm per}\|_{L^2(\Omega_{\rm p})} \le C. 
\end{aligned}
\end{equation}
These can be expanded in the Fourier series as
\begin{equation}\label{est7.prop.bl.beta}
\begin{aligned}
V^{\beta}_{\rm per}(x,y)
&= \sum_{k\in\Z^{d-1}} 
V^{\beta}_k(y) e^{\ii k\cdot x}, \quad y>0, \\
Q^{\beta}_{\rm per}(x,y)
&= \sum_{k\in\Z^{d-1}} 
Q^{\beta}_k(y) e^{\ii k\cdot x}, \quad y>0, 
\end{aligned}
\end{equation}
with $V^{\beta}_{0}(y)$ and $Q^{\beta}_{0}(y)$ vanishing on $[3,\infty)$. Besides, far away from the the boundary, 
\begin{equation}\label{est8.prop.bl.beta}
\begin{aligned}
V^{\beta}_{\rm per}(x,y)
&= \sum_{k\in\Z^{d-1}\setminus\{0\}} 
\V^{\beta}_k(y-3) e^{-|k|(y-3)} e^{\ii k\cdot x}, \quad y>3, \\
Q^{\beta}_{\rm per}(x,y)
&= \sum_{k\in\Z^{d-1}\setminus\{0\}} 
\Q^{\beta}_k(y-3) e^{-|k|(y-3)} e^{\ii k\cdot x}, \quad y>3, 
\end{aligned}
\end{equation}
where $\V^{\beta}_k=\V^{\beta}_k(z)$ and $\Q^{\beta}_k=\Q^{\beta}_k(z)$ are polynomials in $z$ with coefficients depending on $k$, such that 
\begin{equation}
\begin{aligned}
|\V^{\beta}_k(z)|\label{est9.prop.bl.beta} 
&\le
b^{\beta}_k (1+|k||z|)^{2|\beta|+1}, \\
|k|^{-1} (|\partial_z\V^{\beta}_k(z)| + |\Q^{\beta}_k(z)|)
&\le b^{\beta}_k (1+|k||z|)^{2|\beta|}, \\
|k|^{-2} |\partial_z^2\V^{\beta}_k(z)|
+ |k|^{-3} |\partial_z^3\V^{\beta}_k(z)| 
&\le b^{\beta}_k (1+|k||z|)^{2|\beta|}, \\
|k|^{-2} |\partial_z\Q^{\beta}_k(z)| 
+ |k|^{-3} |\partial_z^2\Q^{\beta}_k(z)| 
&\le b^{\beta}_k (1+|k||z|)^{2|\beta|-1},
\end{aligned}
\end{equation}
with the sequence $\{b^{\beta}_k\}_{k\in\Z^{d-1}\setminus\{0\}}$ satisfying 
\begin{equation}\label{est10.prop.bl.beta}
\begin{aligned}
\sum_{k\in\Z^{d-1}\setminus\{0\}}
|k| |b^{\beta}_k|^2 
\le C. 
\end{aligned}
\end{equation}
\end{enumerate}
All the constants $C$ above depend on $\Omega$ and $\beta$ as well as $l$ in \eqref{eq.bl.alpha}.
\end{proposition}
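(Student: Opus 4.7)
The plan is to prove Proposition \ref{prop.bl.beta} by induction on $|\beta|$. The base case $|\beta|=0$ is furnished by Lemma \ref{prop.bl.zero}: $V^0_{\rm poly}$ is a constant vector field (a polynomial of degree zero in $y$), $Q^0_{\rm poly}=0$, and the periodic parts together with \eqref{est5.prop.bl.zero}--\eqref{est6.prop.bl.zero} match \eqref{est9.prop.bl.beta}--\eqref{est10.prop.bl.beta} specialized to $|\beta|=0$. For the inductive step, assume the assertion for every $\gamma$ with $|\gamma|<|\beta|$ and decompose the source data \eqref{def.FG.beta1} as $F^\beta = F^\beta_{\rm poly}(y)+F^\beta_{\rm per}(x,y)$ and $G^\beta = G^\beta_{\rm poly}(y)+G^\beta_{\rm per}(x,y)$ according to the inductive splitting of $V^{\beta-{\bf e}_i}$, $V^{\beta-2{\bf e}_i}$ and $Q^{\beta-{\bf e}_i}$. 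Note that the horizontal derivatives $\partial_i V^{\beta-{\bf e}_i}_{\rm poly}$ vanish identically since $V^{\beta-{\bf e}_i}_{\rm poly}$ depends only on $y$, so these terms live entirely in $F^\beta_{\rm per}$.

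The polynomial correction $(V^\beta_{\rm poly}(y),Q^\beta_{\rm poly}(y))$ will be obtained by explicit integration of an ODE system in $y$. First integrate $\partial_y (V^\beta_{\rm poly})_d = G^\beta_{\rm poly}$ to recover $(V^\beta_{\rm poly})_d$ as a polynomial of degree at most $|\beta|$. Substituting into the $d$-th component of $-\partial_y^2 V^\beta_{\rm poly}+\partial_y Q^\beta_{\rm poly}{\bf e}_d = F^\beta_{\rm poly}$ and integrating once produces $Q^\beta_{\rm poly}$ of degree at most $|\beta|-1$. Finally, integrating the horizontal components $-\partial_y^2 (V^\beta_{\rm poly})_i = (F^\beta_{\rm poly})_i$ twice in $y$ yields the remaining components of $V^\beta_{\rm poly}$, confirming \eqref{est4.prop.bl.beta}.

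For the periodic correction $(V^\beta_{\rm per},Q^\beta_{\rm per})$, the pair must solve the Stokes system on $\Omega$ with source $(F^\beta_{\rm per}, G^\beta_{\rm per})$ and inhomogeneous boundary condition $V^\beta_{\rm per}|_{\partial\Omega}=-V^\beta_{\rm poly}(y)|_{\partial\Omega}$, which is a bounded periodic function of $x$ since $\partial\Omega\subset\{-1<y<1\}$. Following the strategy of Lemma \ref{prop.bl.zero}, build an explicit smooth lift using the truncations $\eta_\pm$ from \eqref{def.eta.m}--\eqref{def.eta.p} together with the Bogovskii operator on $\Omega_{\rm p,<2}$ to absorb both the boundary datum and the divergence $G^\beta_{\rm per}$ (balancing the zero-mode by a constant vector field $c\,\eta_+(y){\bf e}_d$ as in the base case), then apply Lax--Milgram on $\widehat{H}^1_{0,\sigma}(\Omega_{\rm p})$ to obtain a weak solution satisfying \eqref{est6.prop.bl.beta}. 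The far-field expansion \eqref{est8.prop.bl.beta} together with the quantitative bounds \eqref{est9.prop.bl.beta}--\eqref{est10.prop.bl.beta} will then follow by invoking Lemma \ref{prop.bl.F} with $L=3$.

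The main obstacle is the careful bookkeeping of polynomial degrees and summability constants through the induction. The leading contribution to the far-field Fourier coefficients $\F^\beta_k$ of $F^\beta_{\rm per}$ comes from $\partial_i V^{\beta-{\bf e}_i}_{\rm per}$, which carries a factor $\ii k_i$ paired with a polynomial in $z$ of degree $2(|\beta|-1)+1=2|\beta|-1$; hence $|\F^\beta_k(z)|\lesssim a^\beta_k(1+|k||z|)^{2|\beta|-1}$ with $a^\beta_k\lesssim |k|\,b^{\beta-{\bf e}_i}_k$, together with analogous (lower-order) contributions from $V^{\beta-2{\bf e}_i}_{\rm per}$ and $Q^{\beta-{\bf e}_i}_{\rm per}$. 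The integrability hypothesis $\sum_k |k|^{-3}|a^\beta_k|^2<\infty$ required by Lemma \ref{prop.bl.F} then reduces to $\sum_k |k|^{-1}|b^{\beta-{\bf e}_i}_k|^2<\infty$, which is strictly weaker than $\sum_k |k|\,|b^{\beta-{\bf e}_i}_k|^2<\infty$ supplied by the induction. Lemma \ref{prop.bl.F} with $n=2|\beta|-1$ then produces Fourier coefficients of polynomial growth at most $n+2=2|\beta|+1$ with the summability \eqref{est10.prop.bl.beta}, closing the induction.
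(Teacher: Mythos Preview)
Your strategy is the same as the paper's: induction on $|\beta|$, splitting the data into polynomial and periodic parts, explicit integration in $y$ for the former, and a lift/Bogovskii/Lax--Milgram argument followed by Lemma~\ref{prop.bl.F} for the latter. The paper organizes the lift slightly differently (it truncates the polynomial correctors by $\eta_+$ so that everything satisfies zero Dirichlet data, and only extracts $V^\beta_{\rm poly}$ at the very end), but this is cosmetic.

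There is one genuine oversight in your bookkeeping. You track only the Fourier coefficients of $F^\beta_{\rm per}$ and conclude $a^\beta_k\lesssim |k|\,b^{\beta-{\bf e}_i}_k$. But Lemma~\ref{prop.bl.F} must be applied to the \emph{reduced} source, i.e.\ after subtracting the divergence lift $W^\beta_{\rm per}$. The term $\Delta W^\beta_{\rm per}$ contributes Fourier coefficients of size $|k|^2 b^{\beta-{\bf e}_i}_k$ (one power of $|k|$ is gained back by dividing by $\ii k_j$ in the construction of the lift, but two are lost through $\partial_z^2-2|k|\partial_z$), so the correct bound is $a^\beta_k\lesssim |k|^2(b^{\beta-{\bf e}_i}_k+b^{\beta-2{\bf e}_i}_k)$. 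Consequently the hypothesis $\sum_k|k|^{-3}|a^\beta_k|^2<\infty$ of Lemma~\ref{prop.bl.F} reduces \emph{exactly} to the inductive assumption $\sum_k|k|\,|b^{\beta-{\bf e}_i}_k|^2<\infty$, not to the strictly weaker $\sum_k|k|^{-1}|b^{\beta-{\bf e}_i}_k|^2<\infty$ that you claim. The induction still closes, but with no margin; your remark that the needed condition is ``strictly weaker'' is incorrect and would mislead a reader into thinking the estimates in \eqref{est9.prop.bl.beta}--\eqref{est10.prop.bl.beta} could be relaxed.
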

%
%
\begin{proof}
We will show that a weak solution $(V^{\beta},Q^{\beta})$ of \eqref{eq.bl.beta} can be found under the ansatz \eqref{est3.prop.bl.beta}. We rely on an induction on the length $|\beta|$. When $|\beta|=0$, the statement is trivial as 
we chosen $(V^{0},Q^{0})$ to be the solution of \eqref{eq.bl.zero} in Lemma \ref{prop.bl.zero}. Thus we fix $\beta\in\Z_{\ge0}^{d-1}$ with $\beta\neq0$. We assume that a function $(V^{\gamma},Q^{\gamma})$ is constructed for all $\gamma$ with $|\gamma|\le |\beta|-1$ which satisfies \eqref{eq.bl.beta} and all the properties \eqref{est3.prop.bl.beta}--\eqref{est10.prop.bl.beta} with $\beta$ replaced by $\gamma$.

Throughout the proof, all the constants depending on $\beta$ are denoted by $C_\beta$.

Firstly we prove the existence of solutions. Using the induction assumption, we decompose $(F^{\beta},G^{\beta})$ in \eqref{def.FG.beta1} as follows: 
\begin{equation}\label{def.FG.beta2}
\begin{aligned}
F^{\beta}(x,y)
&=F^{\beta}_{\rm poly}(y) + F^{\beta}_{\rm per}(x,y), \\
G^{\beta}(x,y) 
&=G^{\beta}_{\rm poly}(y) + G^{\beta}_{\rm per}(x,y). 
\end{aligned}
\end{equation}
Here $F^{\beta}_{\rm poly}(y)$ and $G^{\beta}_{\rm poly}(y)$ are polynomials in $y$ given by
\begin{equation}\label{def.FG.beta3}
\begin{aligned}
F^{\beta}_{\rm poly}(y)
&=
\sum_{i=1}^{d-1}
\Big(
2\dbinom{\beta_i}{2} 
V^{\beta-2{\bf e}_i}_{\rm poly}(y)
- \dbinom{\beta_i}{1} 
Q^{\beta-{\bf e}_i}_{\rm poly}(y) {\bf e}_i
\Big), \\
G^{\beta}_{\rm poly}(y)
&=-\sum_{i=1}^{d-1}
\dbinom{\beta_i}{1} 
(V^{\beta-{\bf e}_i}_{\rm poly})_i(y), 
\end{aligned}
\end{equation}
while $F^{\beta}_{\rm per}$ and $G^{\beta}_{\rm per}$ are given by 
\begin{equation}\label{def.FG.beta4}
\begin{aligned}
F^{\beta}_{\rm per}
&=
\sum_{i=1}^{d-1}
\Big(
2\dbinom{\beta_i}{2} 
V^{\beta-2{\bf e}_i}_{\rm per}
+ 2\dbinom{\beta_i}{1} 
\partial_i V^{\beta-{\bf e}_i}_{\rm per}
- \dbinom{\beta_i}{1} 
Q^{\beta-{\bf e}_i}_{\rm per} {\bf e}_i
\Big), \\
G^{\beta}_{\rm per}
&=
-\sum_{i=1}^{d-1}
\dbinom{\beta_i}{1} 
(V^{\beta-{\bf e}_i}_{\rm per})_i.
\end{aligned}
\end{equation}

For later use, we summarize the properties of $(F^{\beta},G^{\beta})$. For $y>0$, 
\begin{equation}\label{def.FG.beta5}
\begin{aligned}
G^{\beta}_{\rm per}(x,y)
&= -\sum_{k\in\Z^{d-1}} 
\bigg(
\sum_{i=1}^{d-1} \dbinom{\beta_i}{1}(V^{\beta-{\bf e}_i}_k)_i(y) \bigg)
e^{\ii k\cdot x}, 
\end{aligned}
\end{equation}
and for $y>3$, 
\begin{equation}\label{def.FG.beta6}
\begin{aligned}
F^{\beta}_{\rm per}(x,y)
&= 
\sum_{k\in\Z^{d-1}\setminus\{0\}}
\F^{\beta}_k(y-3)
e^{-|k|(y-3)} 
e^{\ii k\cdot x}. 
\end{aligned}
\end{equation}
Here $\F^{\beta}_k=\F^{\beta}_k(z)$ is a polynomial in $z$ defined as 
\begin{equation*}
\begin{aligned}
\F^{\beta}_k(z)
&= 
\sum_{i=1}^{d-1}
\Big(2\dbinom{\beta_i}{2} 
\V^{\beta-2{\bf e}_i}_k(z) 
+ 2\dbinom{\beta_i}{1} 
\ii k_i \V^{\beta-{\bf e}_i}_k(z)
- \dbinom{\beta_i}{1} 
\Q^{\beta-{\bf e}_i}_k(z) {\bf e}_i\Big), 
\end{aligned}
\end{equation*}
which satisfies, by the induction assumption, 
\begin{equation}\label{def.FG.beta8}
\begin{aligned}
|\F^{\beta}_k(z)| + |k|^{-1} |\partial_z\F^{\beta}_k(z)|
&\le 
C_\beta |k| (b^{\beta-2{\bf e}_i}_k+b^{\beta-{\bf e}_i}_k) (1+|k||z|)^{2|\beta|-1}. 
\end{aligned}
\end{equation}

Clearly, $(F^{\beta},G^{\beta})$ cannot be treated in the $L^2$ spaces due to the polynomial growth. Our plan is to find the polynomial correctors to reduce it to a finite energy function. After obtaining such correctors, we will apply the Lax-Milgram lemma to the reduced problem.

The following proof of the existence is divided into four steps: Steps 1 finds a corrector $W^{\beta}$ for the inhomogeneous divergence $G^{\beta}$ and Step 2 finds a corrector for the new source $F^{\beta}+\Delta W^{\beta}$. Step 3 applies the Lax-Milgram lemma, and finally, the desired solution $(V^{\beta},Q^{\beta})$ will be given in Step 4. In these steps, we use a truncation function $\eta_+$ in \eqref{def.eta.p}.

\noindent {\bf Step 1: Corrector for the divergence.} In this step, we will find a solution $W^{\beta}$ to the following divergence problem for $G^{\beta}$:
\begin{equation}\label{eq1.step1.proof.prop.bl.beta}
\left\{
\begin{array}{ll}
\nabla\cdot W^{\beta}=G^{\beta} &\text{in}\ \Omega \\
W^{\beta}=0 &\text{on}\ \partial\Omega.
\end{array}
\right.
\end{equation}
As the support of $G^{\beta}$ is unbounded in $\Omega_{{\rm p}}$, one cannot use the Bogovskii operator to \eqref{eq1.step1.proof.prop.bl.beta} directly. Thus a correction term is introduced to localize it. Decompose $\Z^{d-1}\setminus\{0\}$ as
\begin{equation*}
    \Z^{d-1}\setminus\{0\} =\bigcup_{i=1}^{d-1} S_i,
\end{equation*}
where
\begin{equation*}
    S_i:=\{(0,\cdots,0,k_i,\cdots,k_{d-1})~|~k_i\neq0\}.
\end{equation*}
Then we define the corrector by 
\begin{equation*}
\begin{aligned}
\widetilde{W}^{\beta}(x,y) 
&= 
-\sum_{i=1}^{d-1}
\dbinom{\beta_i}{1}
\bigg(\int_0^y
\Big((V^{\beta-{\bf e}_i}_{\rm poly})_i(t) + (V^{\beta-{\bf e}_i}_{0})_i(t) \Big)
\dd t\bigg)
{\bf e}_{d} \eta_+(y) \\
&\quad
- \sum_{j=1}^{d-1}
\sum_{k\in S_j}
\bigg(
\sum_{i=1}^{d-1}
\dbinom{\beta_i}{1}
\frac{(V^{\beta-{\bf e}_i}_k)_i(y)}{\ii k_j} {\bf e}_j \bigg) 
e^{\ii k\cdot x} \eta_+(y) 
\end{aligned}
\end{equation*}
and the remainder by 
\begin{equation*}
\begin{aligned}
\widetilde{R}^{\beta}(x,y)
&= 
-G^{\beta}(x,y)(\eta_+(y)-1) \\
&\quad 
+ \sum_{i=1}^{d-1}
\dbinom{\beta_i}{1}
\bigg(\int_0^y 
\Big((V^{\beta-{\bf e}_i}_{\rm poly})_i(t) + (V^{\beta-{\bf e}_i}_{0})_i(t) \Big)
\dd t\bigg)
\partial_y \eta_+(y). 
\end{aligned}
\end{equation*}
Note that $\widetilde{W}^{\beta}=0$ on $\partial\Omega$ thanks to the truncation $\eta_+$. Moreover, $\widetilde{R}^{\beta}\in L^2(\Omega_{{\rm p}})$ is supported in $\Omega_{{\rm p},<2}$. Invoking the definition of $G^{\beta}$ in \eqref{def.FG.beta1} as well as \eqref{def.FG.beta5}, we compute
\begin{equation}\label{eq2.step1.proof.prop.bl.beta}
\begin{aligned}
&\nabla\cdot\widetilde{W}^{\beta}(x,y) \\
&= 
-\sum_{i=1}^{d-1}
\dbinom{\beta_i}{1}
\Big(
(V^{\beta-{\bf e}_i}_{\rm poly})_i(y) + (V^{\beta-{\bf e}_i}_{0})_i(y)
\Big) \eta_+(y) \\
&\quad
-\sum_{i=1}^{d-1}
\dbinom{\beta_i}{1}
\bigg(\int_0^y 
\Big((V^{\beta-{\bf e}_i}_{\rm poly})_i(t) + (V^{\beta-{\bf e}_i}_{0})_i(t) \Big)
\dd t\bigg)
\partial_y\eta_+(y) \\
&\quad
- \sum_{j=1}^{d-1}
\sum_{k\in S_j}
\bigg(
\sum_{i=1}^{d-1}
\dbinom{\beta_i}{1}
(V^{\beta-{\bf e}_i}_k)_i(y)\bigg) 
e^{\ii k\cdot x} \eta_+(y) \\
&= 
G^{\beta}(x,y)\eta_+(y) \\
&\quad
-\sum_{i=1}^{d-1}
\dbinom{\beta_i}{1}
\bigg(\int_0^y
\Big((V^{\beta-{\bf e}_i}_{\rm poly})_i(t) + (V^{\beta-{\bf e}_i}_{0})_i(t) \Big)
\dd t\bigg)
\partial_y \eta_+(y) \\
&= 
G^{\beta}(x,y) - \widetilde{R}^{\beta}(x,y).
\end{aligned}
\end{equation}
Hence $\widetilde{W}^{\beta}$ localizes $G^{\beta}$ in the sense that
the support of $\nabla\cdot\widetilde{W}^{\beta} - G^{\beta}$ is compact in $\Omega_{{\rm p}}$.

The equation for the difference $\widetilde{w}^{\beta}:=W^{\beta}-\widetilde{W}^{\beta}$ is written as 
\begin{equation*}
\left\{
\begin{array}{ll}
\nabla\cdot \widetilde{w}^{\beta}=\widetilde{R}^{\beta} &\text{in } \Omega \\
\widetilde{w}^{\beta}=0 &\text{on } \partial\Omega. 
\end{array}
\right.
\end{equation*}
Although $\widetilde{R}^{\beta}$ is supported in $\Omega_{{\rm p},<2}$, the Bogovskii operator is still not applicable as the average of $\widetilde{R}^{\beta}$ is not necessarily zero. However, this can be easily modified. Let us define the constant $A^{\beta}$ by
\begin{equation*}
A^{\beta}=\frac{1}{(2\pi)^{d-1}}\int_{\Omega_{{\rm p},<2}} \widetilde{R}^{\beta}
\end{equation*}
and the function $R^{\beta}$ by 
\begin{equation}\label{def0.step1.proof.prop.bl.beta}
\begin{aligned}
R^{\beta}(x,y)
&= 
\widetilde{R}^{\beta}(x,y) - A^{\beta}\partial_y\eta_+(y).
\end{aligned}
\end{equation}
Then we see that the average of $R^{\beta}$ over $\Omega_{{\rm p},<2}$ is zero. Thus one of the solutions to
\begin{equation*}
\left\{
\begin{array}{ll}
\nabla\cdot w^{\beta}=R^{\beta} &\text{in } \Omega \\
w^{\beta}=0 &\text{on } \partial\Omega 
\end{array}
\right.
\end{equation*}
is provided by $w^{\beta}=\B[R^{\beta}]\in H^1_0(\Omega_{{\rm p}})^{d}$ from Theorem \ref{thm.bog.}. We point out that $w^\beta$ has been zero-extended from $\Omega_{p,<2}$ to $\Omega_p$.

Now it is easy to verify that the vector field
\begin{equation}\label{def0'.step1.proof.prop.bl.beta}
\begin{aligned}
W^{\beta}(x,y)
:= 
\widetilde{W}^{\beta}(x,y)
+ \B[R^{\beta}](x,y) + A^{\beta} \eta_+(y) {\bf e}_{d}
\end{aligned}
\end{equation}
is a solution to the divergence problem \eqref{eq1.step1.proof.prop.bl.beta}. In fact, by the definition of $R^{\beta}$ in \eqref{def0.step1.proof.prop.bl.beta}, 
\begin{equation*}
\begin{aligned}
\nabla\cdot W^{\beta}(x,y)
&= 
\nabla\cdot \widetilde{W}^{\beta}(x,y)
+ R^{\beta}(x,y)
+ A^{\beta}\partial_y\eta_+(y) \\
&= 
\nabla\cdot \widetilde{W}^{\beta}(x,y)
+ \widetilde{R}^{\beta}(x,y) \\
&= 
G^{\beta}(x,y), 
\end{aligned}
\end{equation*}
where \eqref{eq2.step1.proof.prop.bl.beta} is used in the last line. Since $W^{\beta}=0$ on $\partial\Omega$, it is deduced that $W^{\beta}$ is a desired solution of the divergence problem \eqref{eq1.step1.proof.prop.bl.beta}.

Let us summarize the properties of $W^{\beta}$ to be used later. We decompose $W^{\beta}$ in \eqref{def0'.step1.proof.prop.bl.beta} as
\begin{equation*}
\begin{aligned}
W^{\beta}(x,y)
&=
W^{\beta}_{\rm poly}(y) + W^{\beta}_{\rm per}(x,y), 
\end{aligned}
\end{equation*}
where the polynomial part $W^{\beta}_{\rm poly}$ is defined by 
\begin{equation*}
\begin{aligned}
W^{\beta}_{\rm poly}(y) 
=
\bigg(A^{\beta} 
- \sum_{i=1}^{d-1}
\dbinom{\beta_i}{1}
\Big(
\int_0^y
(V^{\beta-{\bf e}_i}_{\rm poly})_i(t) 
\dd t
+ \int_0^3
(V^{\beta-{\bf e}_i}_{0})_i(t) \dd t
\Big)
\bigg)
{\bf e}_{d}. 
\end{aligned}
\end{equation*}
Here we have used the induction assumption that $V^{\beta-{\bf e}_i}_{0}$ vanishes on $[3,\infty)$. Then the periodic part $W^{\beta}_{\rm per}:=W^{\beta}-W^{\beta}_{\rm poly}$ has the representation
\begin{equation*}
\begin{aligned}
W^{\beta}_{\rm per}(x,y)
&= 
\sum_{k\in\Z^{d-1}\setminus\{0\}} 
\mcW^{\beta}_k(y-3) e^{-|k|(y-3)} e^{\ii k\cdot x}, \quad y>3, 
\end{aligned}
\end{equation*}
with a polynomial $\mcW^{\beta}_k=\mcW^{\beta}_k(z)$ given by, for $k\in\Z^{d-1}\setminus\{0\}$ such that $k \in S_j$ with some $j \in \{1,2,\cdots, d-1\}$, 
\begin{equation}\label{def1.est2.step1.proof.prop.bl.beta}
\begin{aligned}
\mcW^{\beta}_k(z) = -\sum_{i=1}^{d-1}
\dbinom{\beta_i}{1}
\frac{(\V^{\beta-{\bf e}_i}_k)_i(z)}{\ii k_j} {\bf e}_j. 
\end{aligned}
\end{equation}
By the induction assumption, $W^{\beta}_{\rm poly}$ and $W^{\beta}_{\rm per}$ satisfy
\begin{equation}\label{est1.step1.proof.prop.bl.beta}
\begin{aligned}
|W^{\beta}_{\rm poly}(y)| 
&\le C_\beta (1+|y|)^{|\beta|}, \\
|\partial_y W^{\beta}_{\rm poly}(y)|
&\le C_\beta (1+|y|)^{|\beta|-1}, \\
\|W^{\beta}_{\rm per}\|_{H^1(\Omega_{\rm p})}
&\le C_\beta. 
\end{aligned}
\end{equation}

Moreover, we decompose $\Delta W^{\beta}$ as 
\begin{equation*}
\begin{aligned}
\Delta W^{\beta}
&=
\Delta W^{\beta}_{\rm poly}
+ \Delta W^{\beta}_{\rm per}
=\partial_y^2 W^{\beta}_{\rm poly}
+ \Delta W^{\beta}_{\rm per}. 
\end{aligned}
\end{equation*}
Then we compute 
\begin{equation}\label{est2.step1.proof.prop.bl.beta}
\begin{aligned}
\partial_y^2 W^{\beta}_{\rm poly}(y) 
&=
-\bigg(\sum_{i=1}^{d-1}
\dbinom{\beta_i}{1}
(\partial_y V^{\beta-{\bf e}_i}_{\rm poly})_i(y) 
\bigg)
{\bf e}_{d}, \\
\Delta W^{\beta}_{\rm per}(x,y)
&= 
\sum_{k\in\Z^{d-1}\setminus\{0\}}
\mcG_k(y-3) e^{-|k|(y-3)} e^{\ii k\cdot x}, \quad y>3. 
\end{aligned}
\end{equation}
Here the polynomial $\mcG_k=\mcG_k(z)$ is given by, for $k\in\Z^{d-1}\setminus\{0\}$ such that $k \in S_j$, 
\begin{equation*}
\begin{aligned}
\mcG_k(z)
=-\sum_{i=1}^{d-1}
\dbinom{\beta_i}{1}
\frac{1}{\ii k_j}
\big((\partial_z^2 \V^{\beta-{\bf e}_i}_k)_i(z) 
-2|k|(\partial_z \V^{\beta-{\bf e}_i}_k)_i(z)\big)
{\bf e}_j. 
\end{aligned}
\end{equation*}
The cancellation $\Delta (e^{-|k|(y-3)} e^{\ii k\cdot x})=0$ has been used in the computation. Eventually, by the induction assumption again, we see that 
\begin{equation}\label{est4.step1.proof.prop.bl.beta}
\begin{aligned}
|\mcG_k(z)| + |k|^{-1} |\partial_z\mcG_k(z)|
\le C_\beta |k|^2 b^{\beta-{\bf e}_i}_k (1+|k||z|)^{2|\beta|-2}. 
\end{aligned}
\end{equation}

\noindent {\bf Step 2: Corrector for the source.} Our aim in this step is to find a corrector $(\Lambda^{\beta},\Pi^{\beta})$ for the polynomial part of $F^{\beta}+\Delta W^{\beta}$, namely, a pair of functions satisfying
\begin{equation}\label{eq1.step2.proof.prop.bl.beta}
\left\{
\begin{array}{ll}
-\Delta \Lambda^{\beta} + \nabla \Pi^{\beta} 
=  F^{\beta}_{\rm poly} + \partial_y^2 W^{\beta}_{\rm poly}
+ H^{\beta}_{\rm per} &\text{in } \Omega \\
\nabla\cdot\Lambda^{\beta} = 0 &\text{in } \Omega \\
\Lambda^{\beta}=0 &\text{on } \partial\Omega. 
\end{array}
\right.
\end{equation}
Here $H^{\beta}_{\rm per}\in L^2(\Omega_{\rm p})^{d}$ is the remainder in the correction assumed to be compactly supported in $\Omega_{\rm p}$. From \eqref{def.FG.beta3} and \eqref{est2.step1.proof.prop.bl.beta}, the function $F^{\beta}_{\rm poly} + \partial_y^2 W^{\beta}_{\rm poly}$ is explicitly given by 
\begin{equation*}
\begin{aligned}
&F^{\beta}_{\rm poly}(y) + \partial_y^2 W^{\beta}_{\rm poly}(y) \\
&= 
\sum_{j=1}^{d-1}
\sum_{i=1}^{d-1}
2\dbinom{\beta_i}{2} 
(V^{\beta-2{\bf e}_i}_{\rm poly})_j(y) {\bf e}_j
- \sum_{i=1}^{d-1}
\dbinom{\beta_i}{1} 
Q^{\beta-{\bf e}_i}_{\rm poly}(y) {\bf e}_i \\
&\quad
+ \sum_{i=1}^{d-1}
\bigg(
2\dbinom{\beta_i}{2}
(V^{\beta-2{\bf e}_i}_{\rm poly})_{d}(y) -\dbinom{\beta_i}{1}
(\partial_y V^{\beta-{\bf e}_i}_{\rm poly})_i(y) 
\bigg) {\bf e}_{d}. 
\end{aligned}
\end{equation*}
When finding $(\Lambda^{\beta},\Pi^{\beta})$, one needs to be careful about the solenoidal condition $\nabla\cdot\Lambda^{\beta}=0$ as the derivatives with respect to $y$ are involved. Thus the pair of functions $(\Lambda^{\beta}_{\rm poly}(y),\Pi^{\beta}_{\rm poly}(y))$ is introduced as follows. The vector-valued polynomial $\Lambda^{\beta}_{\rm poly}(y)$ is defined by
\begin{equation*}
\begin{aligned}
\Lambda^{\beta}_{\rm poly}(y)
&= 
-\sum_{j=1}^{d-1}
\sum_{i=1}^{d-1}
2\dbinom{\beta_i}{2} 
\Big(\int_0^y \int_0^t 
(V^{\beta-2{\bf e}_i}_{\rm poly})_j(s)
\dd s\dd t
\Big) 
{\bf e}_j \\
&\quad
+ \sum_{i=1}^{d-1}
\dbinom{\beta_i}{1}
\Big(
\int_0^y \int_0^t 
Q^{\beta-{\bf e}_i}_{\rm poly}(s) \dd s\dd t
\Big) 
{\bf e}_i, 
\end{aligned}
\end{equation*}
and the scalar polynomial $\Pi^{\beta}_{\rm poly}(y)$ is defined by
\begin{equation*}
\begin{aligned}
\Pi^{\beta}_{\rm poly}(y) 
&= \sum_{i=1}^{d-1}
\bigg(
2\dbinom{\beta_i}{2} 
\Big(
\int_0^y
(V^{\beta-2{\bf e}_i}_{\rm poly})_{d}(t) 
\dd t
\Big) 
- \dbinom{\beta_i}{1}
(V^{\beta-{\bf e}_i}_{\rm poly})_i(y) 
\bigg). 
\end{aligned}
\end{equation*}
Then, by a simple computation, one has
%
\begin{equation}\label{eq2.step2.proof.prop.bl.beta}
\begin{aligned}
-\Delta\Lambda^{\beta}_{\rm poly}+\nabla \Pi^{\beta}_{\rm poly}
= F^{\beta}_{\rm poly} + \partial_y^2 W^{\beta}_{\rm poly}. 
\end{aligned}
\end{equation}
Hence, by setting 
\begin{equation*}
\begin{aligned}
    &\Lambda^{\beta}(y) = \Lambda^{\beta}_{\rm poly}(y) \eta_+(y), &\qquad& \Pi^{\beta}(y) = \Pi^{\beta}_{\rm poly}(y) \eta_+(y),\\
    &\Lambda_{\rm per}^{\beta}(y) = \Lambda^{\beta}_{\rm poly}(y) (1-\eta_+(y)), &\qquad & \Pi^{\beta}_{\rm per}(y) = \Pi^{\beta}_{\rm poly}(y) (1-\eta_+(y)),
\end{aligned}
\end{equation*}
we see that $(\Lambda^{\beta},\Pi^{\beta})$ is the desired corrector. Indeed, $(\Lambda^{\beta},\Pi^{\beta})$ satisfies \eqref{eq1.step2.proof.prop.bl.beta} with 
\begin{equation}\label{eq3.step2.proof.prop.bl.beta}
\begin{aligned}
H^{\beta}_{\rm per}
&:=
-\Delta\Lambda^{\beta} + \nabla \Pi^{\beta} 
- F^{\beta}_{\rm poly} - \partial_y^2 W^{\beta}_{\rm poly} \\
&=
-\Delta\Lambda^{\beta}_{\rm per} + \nabla \Pi^{\beta}_{\rm per}, 
\end{aligned}
\end{equation}
where \eqref{eq2.step2.proof.prop.bl.beta} is used in the second line. By definition, $H^{\beta}_{\rm per}$ is supported in $\Omega_{{\rm p},<2}$.

We collect the estimates to be used later. By the induction assumption,  
\begin{equation}\label{est1.step2.proof.prop.bl.beta}
\begin{aligned}
|\Lambda^{\beta}_{\rm poly}(y)| 
&\le C_\beta(1+|y|)^{|\beta|}, \\
|\partial_y \Lambda^{\beta}_{\rm poly}(y)| 
+ |\Pi^{\beta}_{\rm poly}(y)| 
&\le C_\beta(1+|y|)^{|\beta|-1}, \\
\|H^{\beta}_{\rm per}\|_{L^2(\Omega_{{\rm p}})}
+ \|\Lambda^{\beta}_{\rm per}\|_{H^1(\Omega_{\rm p})}
+ \|\Pi^{\beta}_{\rm per}\|_{L^2(\Omega_{\rm p})} 
&\le C_\beta. 
\end{aligned}
\end{equation}

\noindent {\bf Step 3: Reduced equations.} We reduce \eqref{eq.bl.beta} using the correctors $W^{\beta}$ and $(\Lambda^{\beta},\Pi^{\beta})$ defined in Steps 1 and 2. Then we apply the Lax-Milgram lemma to the reduced problem.

Let us define the new function $(\widetilde{V}^{\beta},\widetilde{Q}^{\beta})$ by 
\begin{equation*}
\begin{aligned}
\widetilde{V}^{\beta}(x,y)
&=V^{\beta}(x,y)-W^{\beta}(x,y)-\Lambda^{\beta}(y), \\
\widetilde{Q}^{\beta}(x,y)
&=Q^{\beta}(x,y)-\Pi^{\beta}(y). 
\end{aligned}
\end{equation*}
Then we see that $(\widetilde{V}^{\beta},\widetilde{Q}^{\beta})$ must solve the Stokes system with source 
\begin{equation}\label{eq1.step3.proof.prop.bl.beta}
\left\{
\begin{array}{ll}
-\Delta \widetilde{V}^{\beta} 
+ \nabla \widetilde{Q}^{\beta}
= \widetilde{F}^{\beta} &\text{in } \Omega \\
\nabla\cdot \widetilde{V}^{\beta}=0 &\text{in } \Omega \\
\widetilde{V}^{\beta}=0 &\text{on } \partial\Omega. 
\end{array}
\right.
\end{equation}
Here the source $\widetilde{F}^{\beta}$ is given by, thanks to \eqref{eq2.step2.proof.prop.bl.beta} and \eqref{eq3.step2.proof.prop.bl.beta}, 
\begin{equation}\label{eq2.step3.proof.prop.bl.beta}
\begin{aligned}
\widetilde{F}^{\beta}
&=
F^{\beta} + \Delta W^{\beta}
-(-\Delta\Lambda^{\beta} + \nabla \Pi^{\beta}) \\
&=
F^{\beta}_{\rm per}
+ \Delta W^{\beta}_{\rm per} 
- H^{\beta}_{\rm per}. 
\end{aligned}
\end{equation}

We point out that $\widetilde{F}^{\beta}$ is bounded. Moreover, from \eqref{def.FG.beta6} and \eqref{est2.step1.proof.prop.bl.beta}, for $y>3$, 
\begin{equation*}
\begin{aligned}
\widetilde{F}^{\beta}(x,y) 
&=
\sum_{k\in\Z^{d-1}\setminus\{0\}} 
(\F^{\beta}_k(y-3) + \mcG^{\beta}_k(y-3)) e^{-|k|(y-3)} e^{\ii k\cdot x} \\
&=:
\sum_{k\in\Z^{d-1}\setminus\{0\}} 
\widetilde{\F}^{\beta}_k(y-3) e^{-|k|(y-3)} e^{\ii k\cdot x}. 
\end{aligned}
\end{equation*}
The polynomial $\widetilde{\F}^{\beta}_k=\widetilde{\F}^{\beta}_k(z)$ can be estimated as, by \eqref{def.FG.beta8} and \eqref{est4.step1.proof.prop.bl.beta}, 
\begin{equation}\label{est1.step3.proof.prop.bl.beta}
\begin{aligned}
|\widetilde{\F}^{\beta}_k(z)| + |k|^{-1} |\partial_z\widetilde{\F}^{\beta}_k(z)| 
\le 
a^\beta_k (1+|k||z|)^{2|\beta|-1}, 
\end{aligned}
\end{equation}
where the sequence $\{a^\beta_k\}_{k\in\Z^{d-1}\setminus\{0\}}$ is defined by 
\begin{equation}\label{def1.step3.proof.prop.bl.beta}
\begin{aligned}
a^\beta_k:= C_\beta |k|^2 (b^{\beta-2{\bf e}_i}_k+b^{\beta-{\bf e}_i}_k). 
\end{aligned}
\end{equation}
Using the induction assumption, we deduce that $\{a^\beta_k\}_{k\in\Z^{d-1}\setminus\{0\}}$ satisfies \eqref{assump3.prop.bl.F}. This implies, combined with \eqref{est1.step3.proof.prop.bl.beta}, that $\widetilde{F}^{\beta}$ meets all the assumptions in Lemma \ref{prop.bl.F} with $L=3$.

Before applying Lemma \ref{prop.bl.F} to \eqref{eq1.step3.proof.prop.bl.beta}, we verify the unique existence of solutions by the Lax-Milgram lemma. By \eqref{eq2.step3.proof.prop.bl.beta}, the weak formulation of \eqref{eq1.step3.proof.prop.bl.beta} is written as 
\begin{equation*}
\begin{aligned}
\langle \nabla\widetilde{V}^{\beta}, \nabla\varphi\rangle_{\Omega_{{\rm p}}}
&= \langle \widetilde{F}^{\beta}, \varphi\rangle_{\Omega_{{\rm p}}} \\
&= \langle F^{\beta}_{\rm per}- H^{\beta}_{\rm per}, \varphi\rangle_{\Omega_{{\rm p}}}
- \langle \nabla W^{\beta}_{\rm per},  \nabla\varphi\rangle_{\Omega_{{\rm p}}}, \quad \varphi\in \widehat{H}^1_{0,\sigma}(\Omega_{{\rm p}}).
\end{aligned}
\end{equation*}
We estimate all the terms on the right-hand side. Using the Poincar\'{e} inequality in $\Omega_{{\rm p},<3}$, 
\begin{equation*}
\begin{aligned}
|\langle F^{\beta}_{\rm per}- H^{\beta}_{\rm per}, \varphi\rangle_{\Omega_{{\rm p}}}|
&\le 
C(\|F^{\beta}_{\rm per}\|_{L^2(\Omega_{{\rm p},<3})} + \|H^{\beta}_{\rm per}\|_{L^2(\Omega_{{\rm p}})})
\|\nabla \varphi\|_{L^2(\Omega_{{\rm p},<3})} \\
&\quad
+ \bigg|\int_{3}^{\infty} \int_{(-\pi,\pi)^{d-1}} 
F^{\beta}_{\rm per}(x,y)
\varphi(x,y) \dd x \dd y\bigg|. 
\end{aligned}
\end{equation*}
Thanks to the exponential convergence of $F^{\beta}_{\rm per}$, one has
\begin{equation*}
\begin{aligned}
\bigg|\int_{3}^{\infty} \int_{(-\pi,\pi)^{d-1}} 
F^{\beta}_{\rm per}(x,y) \varphi(x,y) \dd x \dd y\bigg|
&\le C \|F^{\beta}_{\rm per}\|_{L^2(\Omega_{{\rm p}})} \|\nabla \varphi\|_{L^2(\Omega_{{\rm p}})}. 
\end{aligned}
\end{equation*}
Thus, by the induction assumption for $F^{\beta}_{\rm per}$ and \eqref{est1.step2.proof.prop.bl.beta}, it follows that 
\begin{equation*}
\begin{aligned}
|\langle F^{\beta}_{\rm per}- H^{\beta}_{\rm per}, \varphi\rangle_{\Omega_{{\rm p}}}|
\le C_\beta\|\nabla \varphi\|_{L^2(\Omega_{\rm p})}.
\end{aligned}
\end{equation*}
From \eqref{est1.step1.proof.prop.bl.beta}, we also have
\begin{equation*}
\begin{aligned}
|\langle \nabla W^{\beta}_{\rm per}, \nabla\varphi\rangle_{\Omega_{\rm p}}|
&\le 
C_\beta \|\nabla \varphi\|_{L^2(\Omega_{\rm p})}. 
\end{aligned}
\end{equation*}
From the two estimates above, we see that there is a unique weak solution $\widetilde{V}^{\beta}\in \widehat{H}^1_{0,\sigma}(\Omega_{{\rm p}})$ of \eqref{eq1.step3.proof.prop.bl.beta} by the Lax-Milgram lemma. Moreover, one can recovers the pressure $\widetilde{Q}^{\beta}\in L^2(\Omega_{\rm p})$ unique up to a constant by using the Bogovskii operator.

Now Lemma \ref{prop.bl.F} applying to \eqref{eq1.step3.proof.prop.bl.beta} yields that $(\widetilde{V}^{\beta},\widetilde{Q}^{\beta})$ can be decomposed into  
\begin{equation*}
\begin{aligned}
\widetilde{V}^{\beta}(x,y)
&= \widetilde{V}^{\beta}_{\rm poly} 
+ \widetilde{V}^{\beta}_{\rm per}(x,y), \\
\widetilde{Q}^{\beta}(x,y)
&= \widetilde{Q}^{\beta}_{\rm per}(x,y),
\end{aligned}
\end{equation*}
where $\widetilde{V}^{\beta}_{\rm poly}$ is a constant vector.
From \eqref{est1.step3.proof.prop.bl.beta} and \eqref{def1.step3.proof.prop.bl.beta} combined with the induction assumption, we see that 
\begin{equation}\label{est2.step3.proof.prop.bl.beta}
\begin{aligned}
|\widetilde{V}^{\beta}_{\rm poly}| 
+ \|\widetilde{V}_{\rm per}\|_{H^1(\Omega_{\rm p})}
+ \|\widetilde{Q}_{\rm per}\|_{L^2(\Omega_{\rm p})} 
&\le C_\beta
\end{aligned}
\end{equation}
and that, for $y>3$, 
\begin{equation*}
\begin{aligned}
\widetilde{V}^{\beta}_{\rm per}(x,y)
&= \sum_{k\in\Z^{d-1}\setminus\{0\}} 
\widetilde{\V}^{\beta}_k(y-3) e^{-|k|(y-3)} e^{\ii k\cdot x}, \\
\widetilde{Q}^{\beta}_{\rm per}(x,y)
&= \sum_{k\in\Z^{d-1}\setminus\{0\}} 
\widetilde{\Q}^{\beta}_k(y-3) e^{-|k|(y-3)} e^{\ii k\cdot x}, 
\end{aligned}
\end{equation*}
where the polynomials $\widetilde{\V}^{\beta}_k=\widetilde{\V}^{\beta}_k(z)$ and $\widetilde{\Q}^{\beta}_k=\widetilde{\Q}^{\beta}_k(z)$ are estimated as 
\begin{equation}\label{est3.step3.proof.prop.bl.beta}
\begin{aligned}
|\widetilde{\V}^{\beta}_k(z)| 
&\le
\tilde{b}^{\beta}_k (1+|k||z|)^{2|\beta|+1}, \\
|k|^{-1} (|\partial_z\widetilde{\V}^{\beta}_k(z)| + |\widetilde{\Q}^{\beta}_k(z)|)
&\le 
\tilde{b}^{\beta}_k (1+|k||z|)^{2|\beta|}, \\
|k|^{-2} |\partial_z^2\widetilde{\V}^{\beta}_k(z)|
+ |k|^{-3} |\partial_z^3\widetilde{\V}^{\beta}_k(z)| 
&\le 
\tilde{b}^{\beta}_k (1+|k||z|)^{2|\beta|}, \\
|k|^{-2} |\partial_z\widetilde{\Q}^{\beta}_k(z)| 
+ |k|^{-3} |\partial_z^2\widetilde{\Q}^{\beta}_k(z)| 
&\le 
\tilde{b}^{\beta}_k (1+|k||z|)^{2|\beta|-1}. 
\end{aligned}
\end{equation}
Moreover, the sequence $\{\tilde{b}^{\beta}_k\}_{k\in\Z^{d-1}\setminus\{0\}}$ satisfies 
\begin{equation}\label{est4.step3.proof.prop.bl.beta}
\begin{aligned}
\sum_{k\in\Z^{d-1}\setminus\{0\}}
|k| |\tilde{b}^{\beta}_k|^2 
\le C_\beta. 
\end{aligned}
\end{equation}

\noindent {\bf Step 4: Existence of a solution.} We finish the existence part of the proof. Let us define $(V^{\beta},Q^{\beta})\in H^1_{{\rm loc}}(\overline{\Omega})^{d}\times L^2_{{\rm loc}}(\overline{\Omega})$ by 
\begin{equation*}
\begin{aligned}
V^{\beta}(x,y)
&=\widetilde{V}^{\beta}(x,y)+W^{\beta}(x,y)+\Lambda^{\beta}(y), \\
Q^{\beta}(x,y)
&=\widetilde{Q}^{\beta}(x,y)+\Pi^{\beta}(y). 
\end{aligned}
\end{equation*}
Then $(V^{\beta},Q^{\beta})$ is a weak solution of \eqref{eq.bl.beta} by the definition of $(\widetilde{V}^{\beta},\widetilde{Q}^{\beta})$ in Step 3. Moreover, $(V^{\beta},Q^{\beta})$ can be decomposed into 
\begin{equation*}
\begin{aligned}
V^{\beta}(x,y) &= V^{\beta}_{\rm poly}(y) + V^{\beta}_{\rm per}(x,y), \\
Q^{\beta}(x,y) &= Q^{\beta}_{\rm poly}(y) + Q^{\beta}_{\rm per}(x,y), 
\end{aligned}
\end{equation*}
where the polynomial part $(V^{\beta}_{\rm poly}(y), Q^{\beta}_{\rm poly}(y))$ is defined by
\begin{equation*}
\begin{aligned}
V^{\beta}_{\rm poly}(y)
&= \widetilde{V}^{\beta}_{\rm poly} 
+ W^{\beta}_{\rm poly}(y) 
+ \Lambda^{\beta}_{\rm poly}(y), \\
Q^{\beta}_{\rm poly}(y)
&= \Pi^{\beta}_{\rm poly}(y), 
\end{aligned}
\end{equation*}
while the periodic part $(V^{\beta}_{\rm per}, Q^{\beta}_{\rm per})$ is defined by
\begin{equation*}
\begin{aligned}
V^{\beta}_{\rm per}(x,y)
&= \widetilde{V}^{\beta}_{\rm per}(x,y)+W^{\beta}_{\rm per}(x,y)
+ \Lambda^{\beta}_{\rm per}(y), \\
Q^{\beta}_{\rm per}(x,y)
&= \widetilde{Q}^{\beta}_{\rm per}(x,y)
+ \Pi^{\beta}_{\rm per}(y). 
\end{aligned}
\end{equation*}
Notice that $(V^{\beta}_{\rm per},Q^{\beta}_{\rm per})$ has the Fourier series expansion, when $y>3$, 
\begin{equation*}
\begin{aligned}
V^{\beta}_{\rm per}(x,y)
&= \sum_{k\in\Z^{d-1}\setminus\{0\}} 
\V^{\beta}_k(y-3) e^{-|k|(y-3)} e^{\ii k\cdot x}, \\
Q^{\beta}_{\rm per}(x,y)
&= \sum_{k\in\Z^{d-1}\setminus\{0\}} 
\Q^{\beta}_k(y-3) e^{-|k|(y-3)} e^{\ii k\cdot x}, 
\end{aligned}
\end{equation*}
with the polynomials 
\begin{equation*}
\begin{aligned}
\V^{\beta}_k(z) := \widetilde{\V}^{\beta}_k(z) + \mcW^{\beta}_k(z), \quad 
\Q^{\beta}_k(z) := \widetilde{\Q}^{\beta}_k(z). 
\end{aligned}
\end{equation*}
Therefore, we find the desired solution $(V^{\beta},Q^{\beta})$ which satisfies \eqref{est3.prop.bl.beta}, \eqref{est7.prop.bl.beta} and \eqref{est8.prop.bl.beta}.

To conclude, we check \eqref{est4.prop.bl.beta}, \eqref{est6.prop.bl.beta}, \eqref{est9.prop.bl.beta} and \eqref{est10.prop.bl.beta}. The estimates in \eqref{est4.prop.bl.beta} and \eqref{est6.prop.bl.beta} follow from \eqref{est1.step1.proof.prop.bl.beta}, \eqref{est1.step2.proof.prop.bl.beta} and \eqref{est2.step3.proof.prop.bl.beta}. The ones in \eqref{est9.prop.bl.beta} and \eqref{est10.prop.bl.beta} are the consequence 
of \eqref{def1.est2.step1.proof.prop.bl.beta}, \eqref{est3.step3.proof.prop.bl.beta}, \eqref{est4.step3.proof.prop.bl.beta}, and the induction assumption. This completes the proof of Proposition \ref{prop.bl.beta}. 
\end{proof}
%

We finish the proof of Proposition \ref{thm.bl.alpha} using Proposition \ref{prop.bl.beta}.

%
\begin{proofx}{Proposition \ref{thm.bl.alpha} (second half)}
Take $(V^{\beta},Q^{\beta})$ in Proposition \ref{prop.bl.beta} for all $\beta\le \alpha$ and define $(v^{\alpha},q^{\alpha})$ by \eqref{ansatz}. Then, by the definition of $(V^{\beta},Q^{\beta})$, we see that $(v^{\alpha},q^{\alpha})$ is a weak solution of \eqref{eq.bl.alpha}. Moreover, it has the decomposition as displayed in \eqref{est2.thm.bl.alpha}, if we set
\begin{equation}\label{eq.vpoly.vhet}
\begin{aligned}
&v^{\alpha}_{\rm poly}(x,y) =
\sum_{\beta\le\alpha} \dbinom{\alpha}{\beta} x^{\alpha-\beta} V^{\beta}_{\rm poly}(y) , \quad
v^{\alpha}_{\rm het}(x,y) = \sum_{\beta\le\alpha} \dbinom{\alpha}{\beta} x^{\alpha-\beta} V^{\beta}_{\rm per}(x,y), \\
&q^{\alpha}_{\rm poly}(x,y) =
\sum_{\beta\le\alpha} \dbinom{\alpha}{\beta} x^{\alpha-\beta} Q^{\beta}_{\rm poly}(y) , \quad
q^{\alpha}_{\rm het}(x,y) = \sum_{\beta\le\alpha} \dbinom{\alpha}{\beta} x^{\alpha-\beta} Q^{\beta}_{\rm per}(x,y). 
\end{aligned}
\end{equation}

Now it suffices to prove the estimates in \eqref{est3.thm.bl.alpha}, \eqref{est4.thm.bl.alpha} and \eqref{est5.thm.bl.alpha} since then \eqref{est1.thm.bl.alpha} follows. From the estimate of $V^{\beta}_{\rm poly}(y)$ in Proposition \ref{prop.bl.beta}, we obtain the first line of \eqref{est3.thm.bl.alpha} by
\begin{equation}
\begin{aligned}
|v^{\alpha}_{\rm poly}(x,y)|
&\le 
\sum_{\beta\le\alpha} 
C|x|^{|\alpha|-|\beta|} (1+|y|)^{|\beta|} \\
&\le
C(1+|x|+|y|)^{|\alpha|}. 
\end{aligned}
\end{equation}
The other estimates can be proved by easy computation using Proposition \ref{prop.bl.beta}. The proof is complete.
\end{proofx}
%

\subsection{Heterogeneous Stokes polynomials}\label{subsec.het.stokespoly}

In this subsection, we would like to prove Theorem \ref{intro.thm.bl} and describe the heterogeneous Stokes polynomials in $\Omega$ satisfying the no-slip boundary condition.

\begin{proofx}{Theorem \ref{intro.thm.bl}}
For every triplet $(\alpha,l,i)\in\Z_{\ge0}^{d-1}\times \Z_{+} \times \{1,\cdots,d\}$, we pick a solution $(v^{\alpha,l}_{(i)}, q^{\alpha,l}_{(i)})$ of \eqref{eq.bl.alpha} in Proposition \ref{thm.bl.alpha}. Fix $m\ge 1$. Then a linear mapping $\mathscr{S}$ on 
\begin{equation*}
\begin{aligned}
\mathcal{P}_{m,0}(\R^d_+)^d
= \spn \{x^\alpha y^l {\bf e}_i~|~ 
\alpha\in\Z_{\ge0}^{d-1}, \ \, 
l\in\Z_{+}, \ \, 
i\in\{1,\cdots,d\}\} 
\end{aligned}
\end{equation*}
to $H^1_{{\rm loc}}(\overline{\Omega})^{d}\times L^2_{{\rm loc}}(\overline{\Omega})$ is induced which maps $x^\alpha y^l {\bf e}_i$ to the chosen $(v^{\alpha,l}_{(i)}, q^{\alpha,l}_{(i)})$. Moreover, for every $P\in\mathcal{P}_{m,0}(\R^d_+)^d$, the image $(v_{P},q_{P}):=\mathscr{S}[P]$ is a weak solution to
\begin{equation}\label{eq.BLp}
\left\{
\begin{array}{ll}
-\Delta v_P + \nabla q_P=0 &\text{in } \Omega \\
\nabla\cdot v_P=0 &\text{in } \Omega \\
v_P + P =0 &\text{on } \partial\Omega.
\end{array}
\right.
\end{equation}

It is remaining to show that $v_P$ has a strictly lower growth rate than $P$ at infinity. By the definition of $\mathcal{P}_{m,0}(\R^d_+)^d$, $P=P(x,y)$ has a form of
\begin{equation}\label{eq.p.StokesP}
    P(x,y) = \sum_{\substack{|\alpha|+l\le m \\ l\ge 1}}\sum_{i=1}^{d} A^{\alpha,l}_i x^\alpha y^l \mathbf{e}_i,
\end{equation}
where $A^{\alpha,l}_i$ are constant coefficients. 
By 
the definition of $\mathscr{S}$, we have 
\begin{equation}\label{eq.vp.qp}
    v_P = \sum_{\substack{|\alpha|+l\le m \\ l\ge 1}}\sum_{i=1}^{d} A^{\alpha,l}_i v^{\alpha,l}_{(i)}, \qquad q_P = \sum_{\substack{|\alpha|+l\le m \\ l\ge 1}} \sum_{i=1}^{d} A^{\alpha,l}_i q^{\alpha,l}_{(i)}. 
\end{equation}
Suppose that $P$ grows like $O(|(x,y)|^m)$, which implies that $|\alpha| + l\le m$ for any nonzero $A^{\alpha,l}_i$. From \eqref{eq.p.StokesP}, we see that $l \ge 1$ and therefore $|\alpha|\le m-1$. On the other hand, the pointwise estimates \eqref{est3.thm.bl.alpha} and \eqref{est5.thm.bl.alpha} imply that $v_P$ grows at most like $O(|(x,y)|^{m-1})$ which is strictly less than the growth rate of $P$ at infinity. This completes the proof.
\end{proofx}

The next proposition gives an intrinsic formula for $\mathscr{S}$ in terms of the building blocks 
$$\{(V^\beta,Q^\beta)\}_{\beta\ge0} = \{(V^{\beta,l}_{(i)},Q^{\beta,l}_{(i)}\}_{\beta\ge0}$$
constructed in Proposition \ref{prop.bl.beta}. Here we recall the dependence on $(l,i)$, which has been omitted for simplicity. We define the matrix $V^{\beta,l}$ and the vector $Q^{\beta,l}$ respectively by
$$V^{\beta,l} = (V^{\beta,l}_{(1)}, V^{\beta,l}_{(2)},\cdots, V^{\beta,l}_{(d)})$$ 
and 
$$Q^{\beta,l} = (Q^{\beta,l}_{(1)}, Q^{\beta,l}_{(2)},\cdots, Q^{\beta,l}_{(d)}).$$

%
\begin{proposition}\label{prop.S.formula}
For each $P \in \mathcal{P}_{m,0}(\R^d_+)^d$, 
the image $(v_P,q_P):=\mathscr{S}[P]$ is represented as 
%
\begin{equation}\label{eq.wp.formula}
v_P(x,y) 
= \sum_{\substack{|\beta|+k \le m \\ \beta \ge 0, k\ge 1}}
\frac{1}{\beta! k!} V^{\beta,k}(x,y) 
\partial_x^\beta \partial_y^k P(x,0)
\end{equation}
and
\begin{equation}\label{eq.qp.formula}
q_P(x,y)  
= \sum_{\substack{|\beta|+k \le m \\ \beta \ge 0, k\ge 1}}
Q^{\beta,k}(x,y) 
\cdot \partial_x^\beta \partial_y^k P(x,0). 
\end{equation}
\end{proposition}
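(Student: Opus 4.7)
The plan is a direct algebraic verification that combines the linearity of $\mathscr{S}$ with the ansatz \eqref{ansatz} established in the proof of Proposition \ref{thm.bl.alpha}. No new estimates or function-space arguments are needed; the statement is, at heart, a rewriting of \eqref{ansatz} in a Taylor-expansion form.

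First I would expand $P$ in the monomial basis,
\begin{equation*}
P(x,y) = \sum_{\substack{|\alpha|+l\le m \\ l\ge 1}}\sum_{i=1}^{d} A^{\alpha,l}_i x^\alpha y^l \mathbf{e}_i,
\end{equation*}
and then invoke linearity together with \eqref{eq.vp.qp} and the ansatz \eqref{ansatz} to write
\begin{equation*}
v_P(x,y) = \sum_{\substack{|\alpha|+l\le m \\ l\ge 1}}\sum_{i=1}^{d} A^{\alpha,l}_i \sum_{\beta \le \alpha} \binom{\alpha}{\beta} x^{\alpha-\beta} V^{\beta,l}_{(i)}(x,y),
\end{equation*}
together with the analogous expression for $q_P$ in which $V^{\beta,l}_{(i)}$ is replaced by $Q^{\beta,l}_{(i)}$.

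Next I would compute the boundary Taylor data of $P$. Using $\partial_x^\beta x^\alpha = \frac{\alpha!}{(\alpha-\beta)!} x^{\alpha-\beta}$ when $\beta \le \alpha$, $\partial_y^k y^l|_{y=0} = l!\, \delta_{kl}$, and the elementary identity $\frac{\alpha!}{(\alpha-\beta)!} = \beta! \binom{\alpha}{\beta}$, one obtains for every $k \ge 1$ the formula
\begin{equation*}
\frac{1}{\beta! k!}\, \partial_x^\beta \partial_y^k P(x,0) = \sum_{\substack{|\alpha|+k\le m \\ \alpha \ge \beta}} \sum_{i=1}^{d} A^{\alpha,k}_i \binom{\alpha}{\beta} x^{\alpha-\beta} \mathbf{e}_i.
\end{equation*}

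Finally I would substitute this identity into the right-hand side of \eqref{eq.wp.formula}, interpreting the action of the matrix $V^{\beta,k}(x,y)$ on a vector $w \in \mathbb{R}^d$ componentwise as $V^{\beta,k}(x,y)\,w = \sum_i w_i V^{\beta,k}_{(i)}(x,y)$. Swapping the order of summation between $(\beta, k)$ and $(\alpha, k, i)$ — the combined constraints $\beta \le \alpha$ and $|\alpha|+k\le m$ automatically enforce $|\beta|+k\le m$ — the inner sum $\sum_{\beta \le \alpha} \binom{\alpha}{\beta} x^{\alpha-\beta} V^{\beta,k}_{(i)}(x,y)$ collapses by \eqref{ansatz} to $v^{\alpha,k}_{(i)}(x,y)$, and one recovers the expression for $v_P$ obtained in the first step. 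The identity \eqref{eq.qp.formula} follows by exactly the same bookkeeping, now using the scalar contraction $Q^{\beta,k}(x,y) \cdot w = \sum_i w_i Q^{\beta,k}_{(i)}(x,y)$. The only point requiring any care is reconciling the two descriptions of the index set, which I would handle by fixing $(\alpha,k,i)$ outside and summing over $\beta \le \alpha$ on the inside; beyond that, there is no real obstacle.
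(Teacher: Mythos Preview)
Your proposal is correct and uses essentially the same ingredients as the paper: linearity of $\mathscr{S}$, the ansatz \eqref{ansatz}, and the Taylor identity $\binom{\alpha}{\beta} x^{\alpha-\beta} = \frac{1}{\beta!\,l!}\,\partial_x^\beta\partial_y^l(x^\alpha y^l)\big|_{y=0}$. The only cosmetic difference is that the paper reduces at once to a single monomial $P_0 = x^\alpha y^l \mathbf{e}_i$ and verifies \eqref{eq.wp.formula} for that monomial, whereas you carry the full expansion of $P$ through and swap the order of summation at the end; the underlying computation is identical.
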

%
%
\begin{proof}
By the linearity of $\mathscr{S}$, it suffices to show \eqref{eq.wp.formula} and \eqref{eq.qp.formula} for monomials. We first consider $v^{\alpha,l}_{(i)}:=v_{P_0}$ for the monomial $P_0(x,y) = x^\alpha y^l \mathbf{e}_i$. By the representation \eqref{ansatz},
\begin{equation}\label{eq1.proof.prop.S.formula}
v^{\alpha,l}_{(i)}(x,y) 
= \sum_{\beta \le \alpha} \dbinom{\alpha}{\beta} x^{\alpha-\beta} V^{\beta,l}_{(i)}(x,y).
\end{equation}
Using the formula
\begin{equation*}
x^{\alpha-\beta} 
= \frac{(\alpha-\beta)!}{\alpha! l!} \partial_x^\beta \partial_y^l (x^\alpha y^l) 
= \frac{(\alpha-\beta)!}{\alpha! l!} \partial_x^\beta \partial_y^l P_0(x,0),
\end{equation*}
we rewrite \eqref{eq1.proof.prop.S.formula} as
\begin{equation}
\begin{aligned}
    v^{\alpha,l}_{(i)}(x,y) & = \sum_{\beta \le \alpha} \dbinom{\alpha}{\beta} \frac{(\alpha-\beta)!}{\alpha! l!}V^{\beta,l}_{(i)}(x,y) 
    \partial_x^\beta \partial_y^l P_0(x,0) \\
    & = \sum_{\substack{|\beta|+k \le m \\ \beta \ge 0, k\ge 1}} 
    \frac{1}{\beta! k!}V^{\beta,k}(x,y) \partial_x^\beta \partial_y^k P_0(x,0),
\end{aligned}   
\end{equation}
where the matrix-vector multiplication has been applied. Hence we have \eqref{eq.wp.formula} for $P_0$. The formula \eqref{eq.qp.formula} for $P_0$ follows from a similar argument. This completes the proof. 
\end{proof}

Now, recall that $\mathcal{S}_m(\R_+^{d})$ consists of all the Stokes polynomials in the upper half space up to degree $m$; see \eqref{def.stokespoly} in Section \ref{sec.stokespoly} for the definition. We define the space of heterogeneous Stokes polynomials up to degree $m$ by 
\begin{equation}
\mathcal{S}_m(\Omega) 
= \{ (w_P,\pi_P) = (P,
Q) + \mathscr{S}[P]~|~ (P,Q) \in\mathcal{S}_m(\R^d_+) \}.
\end{equation}
Note that, for a given $(P,Q) \in \mathcal{S}_m(\R^d_+)$ with $P$ expressed as in \eqref{eq.p.StokesP}, we have
\begin{equation}\label{eq.wp.StokesP}
\begin{aligned}
    w_P = \sum_{\substack{|\alpha|+l\le m \\ l\ge 1}}\sum_{i=1}^{d} A^{\alpha,l}_i (x^\alpha y^l \mathbf{e}_i + v^{\alpha,l}_{(i)}), \qquad
    \pi_P = Q + \sum_{\substack{|\alpha|+l\le m \\ l\ge 1}}\sum_{i=1}^{d} A^{\alpha,l}_i q^{\alpha,l}_{(i)},
\end{aligned}
\end{equation}
where $Q$ can be determined by $P$ up to a constant.
Observe that the mapping $(P,Q) \mapsto (w_P, \pi_P)$ is a linear bijection from $\mathcal{S}_m(\R_+^d)$ to $\mathcal{S}_m(\Omega)$, since $\mathscr{S}[P]$ can be viewed as a lower order perturbation added to $(P,Q)$. 
This combined with the results in Section \ref{sec.stokespoly} means that
\begin{equation*}
 \dim \mathcal{S}_m(\Omega) = \dim \mathcal{S}_m(\R^{d}_+) = d \dbinom{m+d-2}{d-1}.
\end{equation*}

Finally, the following proposition is useful in proving the higher-order regularity.
%
\begin{proposition}\label{prop.Dwp-Dp}
Let $P$ and $w_P$ be given by \eqref{eq.p.StokesP} and \eqref{eq.wp.StokesP}. Then for $r\ge 1$,
\begin{equation}
    \bigg( \dashint_{B_{r,+}} |\nabla w_P - \nabla P|^2 \bigg)^{1/2} \le \sum_{\substack{|\alpha|+l\le m \\ l\ge 1}}\sum_{i=1}^{d} C_{\alpha,l} |A^{\alpha,l}_i|\ r^{|\alpha|-1/2}.
\end{equation}
The constant $C_{\alpha,l}$ depends on $\Omega,\alpha$ and $l$. 
\end{proposition}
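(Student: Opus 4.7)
The plan is almost immediate once one unpacks the definitions. By \eqref{eq.wp.StokesP},
\begin{equation*}
w_P - P = \sum_{\substack{|\alpha|+l\le m \\ l\ge 1}}\sum_{i=1}^{d} A^{\alpha,l}_i\, v^{\alpha,l}_{(i)},
\end{equation*}
since the polynomial part of $w_P$ is exactly $P$ plus the boundary layer velocities $v^{\alpha,l}_{(i)}$ associated with each monomial of $P$ via the map $\mathscr{S}$. Taking $\nabla$ and applying the triangle inequality in $\ul{L}^2(B_{r,+})$ yields
\begin{equation*}
\|\nabla w_P - \nabla P\|_{\ul{L}^2(B_{r,+})}
\le \sum_{\substack{|\alpha|+l\le m \\ l\ge 1}}\sum_{i=1}^{d} |A^{\alpha,l}_i|\,\|\nabla v^{\alpha,l}_{(i)}\|_{\ul{L}^2(B_{r,+})}.
\end{equation*}

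The estimate now follows by invoking Proposition \ref{thm.bl.alpha} term by term: the second line of \eqref{est1.thm.bl.alpha} gives, for each triplet $(\alpha,l,i)$ and each $r\ge 1$,
\begin{equation*}
\|\nabla v^{\alpha,l}_{(i)}\|_{\ul{L}^2(B_{r,+})} \le C_{\alpha,l}\, r^{|\alpha|-1/2},
\end{equation*}
with $C_{\alpha,l}$ depending on $\Omega$, $\alpha$, and $l$ (the dependence on $i\in\{1,\dots,d\}$ can be absorbed by taking the maximum over the finitely many $i$). Substituting this bound into the previous inequality gives exactly the claimed estimate.

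There is no real obstacle here; the statement is a linear combination bookkeeping lemma whose sole analytic content is the already-proven gradient bound \eqref{est1.thm.bl.alpha} for the building blocks $v^{\alpha,l}_{(i)}$. The only care needed is to check that $w_P - P$ is indeed the pure boundary-layer combination $\sum A^{\alpha,l}_i v^{\alpha,l}_{(i)}$ (with no polynomial residue), which is immediate from the construction of $w_P$ in \eqref{eq.wp.StokesP} together with the definition $v^{\alpha,l}_{(i)} = v_{P_0}$ for the monomial $P_0 = x^\alpha y^l \mathbf{e}_i$ used in the proof of Theorem \ref{intro.thm.bl}.
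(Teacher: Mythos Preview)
Your proof is correct and follows essentially the same approach as the paper: the paper's proof simply notes that $w_P - P = v_P$ (the linear combination \eqref{eq.vp.qp}) and invokes the gradient bound \eqref{est1.thm.bl.alpha} from Proposition \ref{thm.bl.alpha}. Your write-up merely makes explicit the triangle inequality step that the paper leaves implicit.
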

%
\begin{proof}
This is a corollary of \eqref{est1.thm.bl.alpha} in Proposition \ref{thm.bl.alpha} and the fact that $v_P = w_P - P$, where $v_P$ is given by \eqref{eq.vp.qp}.
\end{proof}
%

\section{Large-scale regularity}\label{sec.largescale}
In this section, we would like to prove Theorem \ref{thm.main}. The proof uses the boundary layers in the previous section and will be provided at the end of Subsection \ref{subsec.pressure}.

Let $(u, p)$ be a weak solution of 
\begin{equation}\tag{S}\label{eq.S}
\left\{
\begin{array}{ll}
-\Delta u+\nabla p=0&\mbox{in}\ B_{R,+}\\
\nabla\cdot u=0&\mbox{in}\ B_{R,+}\\
u=0&\mbox{on}\ \Gamma_R. 
\end{array}
\right.
\end{equation}
In \cite{HPZ}, we have proved the large-scale Lipschitz estimate for Stokes system without periodicity assumption on the boundary. That is, for any $r\in (1, R/4)$,
\begin{equation}\label{est.StokesLip}
    \bigg( \dashint_{B_{r,+}} |\nabla u|^2 \bigg)^{1/2} + \bigg( \dashint_{B_{r,+}} |p - \dashint_{B_{R/2,+}} p|^2 \bigg)^{1/2} \le C\bigg( \dashint_{B_{R,+}} |\nabla u|^2 \bigg)^{1/2},
\end{equation}
where $C$ is independent of $r$ and $R$. Note that this estimate is scaling and translation invariant. Also, by a standard argument in John domains with the Bogovski lemma, the region $B_{R/2,+}$ for the averaged pressure can be replaced by any $B_{cR,+}(x,0)$ with $c\in (0,1/4)$ and $|x|<R/4$. In this section, we will take advantage of this a priori estimate.

Throughout this section, for $r>1$, we define
\begin{equation*}
    Q_{r,+} = Q_r(0) \cap \{ y > -1 \}.
\end{equation*}
Observe that $B_{r,+} \subset Q_{r,+}$ and $|Q_{r,+} \setminus B_{r,+}| \le r^{-1} |B_{r,+}|$. Moreover, all the constants depending on the order $m$ are denoted by $C_m$.

\subsection{Velocity estimate}\label{subsec.velocityest}
To study the higher-order regularity, we first define the higher-order excess:
\begin{equation}
    H_{m}(u; r) : = \inf_{(w_m,q_m) \in \mathcal{S}_m(\Omega) } \bigg( \dashint_{B_{r,+}} |\nabla u - \nabla w_m|^2  \bigg)^{1/2}.
\end{equation}
Since the Stokes system is linear, unlike in \cite{HPZ}, we do not include the pressure part in $H_m$, whose estimate could be recovered from the velocity easily; see Subsection \ref{subsec.pressure}.

%
\begin{lemma}
Let $(u, p)$ be the weak solution of \eqref{eq.S} in $B_{2r,+}$. Then for every $r>1$, there exists $(v_r, q_r)$ satisfying
\begin{equation}\label{eq.vr.appro}
\left\{
\begin{array}{ll}
-\Delta v_r +\nabla q_r =0&\mbox{in}\ Q_{r,+}\\
\nabla\cdot v_r =0&\mbox{in}\ Q_{r,+}\\
v_r = 0 &\mbox{on}\ \partial Q_{r,+} \cap \{ y = -1 \}
\end{array}
\right.
\end{equation}
such that
\begin{equation}\label{est.Due-Dvr}
    \bigg( \dashint_{Q_{r,+} } |\nabla u - \nabla v_r|^2 \bigg)^{1/2} \le Cr^{-1/2} \bigg( \dashint_{Q_{2r,+}} |\nabla u|^2 \bigg)^{1/2},
\end{equation}
where $C$ depends only on the dimension $d$. 
\end{lemma}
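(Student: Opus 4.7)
I would define $v_r$ as the Dirichlet--Stokes solution on the flat-bottomed cylinder $Q_{r,+}$ carrying the trace of the zero-extended $u$. Let $\tilde u\in H^1(Q_{r,+})^d$ denote the extension of $u$ by zero; this is admissible because $u=0$ on $\Gamma_r$ in trace and $\Omega\subset\{y>-1\}$, and $\tilde u$ remains divergence-free on $Q_{r,+}$ since the vanishing of $u$ on $\Gamma_r$ creates no normal jump. Let $(v_r,q_r)$ solve the standard Stokes problem on $Q_{r,+}$ with data $v_r|_{\partial Q_{r,+}}=\tilde u|_{\partial Q_{r,+}}$; well-posedness follows from standard Dirichlet--Stokes theory and the compatibility $\int_{\partial Q_{r,+}}\tilde u\cdot n=0$, and the prescribed bottom condition in \eqref{eq.vr.appro} is automatic because $\tilde u=0$ on $\{y=-1\}\cap Q_r$. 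Setting $w:=\tilde u-v_r\in H^1_0(Q_{r,+})^d$ (divergence-free) and testing $v_r$'s weak formulation against $w$ gives the energy identity
\begin{equation*}
\int_{Q_{r,+}}|\nabla w|^2 \;=\; \int_{Q_{r,+}}\nabla\tilde u\cdot\nabla w \;=\; \int_{B_{r,+}}\nabla u\cdot\nabla w.
\end{equation*}

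To extract the factor $r^{-1/2}$, I would split $w=\chi w+(1-\chi)w$ with a smooth cutoff $\chi=\chi(y)$ satisfying $\chi\equiv 0$ for $y\le 0$ and $\chi\equiv 1$ for $y\ge 1$. The product $\chi w$ vanishes on $\Gamma_r\subset\{-1<y\le 0\}$ (because $\chi$ does) and on $\partial Q_r\cap\overline\Omega$ (because $w$ does), so $\chi w\in H^1_0(B_{r,+})^d$, and its zero-extension lies in $H^1_0(B_{2r,+})^d$ and is admissible as a test function for $u$ on $B_{2r,+}$. Since $\nabla\cdot(\chi w)=\chi'(y)w_d$, the weak formulation of $u$ gives
\begin{equation*}
\int_{B_{r,+}}\nabla u\cdot\nabla(\chi w) \;=\; \int_{B_{r,+}} p\,\chi'(y)\,w_d,
\end{equation*}
with $\chi' w_d$ supported in the thin slab $\{0<y<1\}\cap B_{r,+}$ and of zero mean on $B_{r,+}$ (by the divergence theorem applied to $\chi w$), so $p$ can be replaced by $p-\bar p$ for any constant $\bar p$. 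The remaining piece $\int_{B_{r,+}}\nabla u\cdot\nabla((1-\chi)w)$ is handled directly, since $(1-\chi)w$ is supported in $\{y\le 1\}\cap B_{r,+}$.

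Both contributions will then be estimated by the same thin-layer principle. Covering $\{y\le 1\}\cap B_{r,+}$ (volume $\sim r^{d-1}$) by $\sim r^{d-1}$ unit-scale half-balls centered on $\{y=0\}$ and invoking the large-scale Lipschitz estimate \eqref{est.StokesLip} at unit scale with outer scale $\sim r$ yields
\begin{equation*}
\|\nabla u\|_{L^2(\{y\le 1\}\cap B_{r,+})}^2 + \|p-\bar p\|_{L^2(\{y\le 1\}\cap B_{r,+})}^2 \;\le\; C\, r^{d-1}\,\|\nabla u\|_{\underline{L}^2(B_{2r,+})}^2
\end{equation*}
for a single global constant $\bar p$ (the discrepancy between local pressure means on each half-ball and a single reference mean on, say, $B_{r/2,+}(0)$, being controlled by a standard telescoping argument with the pressure half of \eqref{est.StokesLip}). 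Combining with the Poincar\'e inequality (using $w|_{\{y=-1\}}=0$) to bound $\|w\|_{L^2(\text{thin slab})}$ and $\|\nabla((1-\chi)w)\|_{L^2}$ by $\|\nabla w\|_{L^2(Q_{r,+})}$, Cauchy--Schwarz produces
\begin{equation*}
\int_{Q_{r,+}}|\nabla w|^2 \;\le\; C\, r^{(d-1)/2}\,\|\nabla u\|_{\underline{L}^2(B_{2r,+})}\,\|\nabla w\|_{L^2(Q_{r,+})},
\end{equation*}
which, after absorption and normalization by $|Q_{r,+}|^{1/2}\sim r^{d/2}$, gives the claimed \eqref{est.Due-Dvr}.

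The principal obstacle is bookkeeping rather than any deep estimate: one must verify that $\chi w$ truly has the right vanishing trace to serve as a test function for $u$ across the rough boundary (exploiting the placement $\Gamma\subset\{-1<y\le 0\}$ so that $\chi\equiv 0$ there), and one must select a \emph{single} pressure constant $\bar p$ that is simultaneously compatible with the large-scale pressure bound on the whole thin layer rather than patchwise. The $r^{-1/2}$ gain itself is geometric, reflecting that $Q_{r,+}\setminus B_{r,+}$ together with the transition slab $\{y\le 1\}\cap B_{r,+}$ occupies only an $O(r^{-1})$ fraction of $Q_{r,+}$ by volume.
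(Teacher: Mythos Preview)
Your proposal is correct and follows essentially the same approach as the paper: construct $(v_r,q_r)$ as the Dirichlet--Stokes solution on $Q_{r,+}$ with boundary data $\tilde u$, derive the energy identity for $w=\tilde u-v_r$ by testing $v_r$ against $w$, then split $w$ via a vertical cutoff (the paper uses $\eta^2$ with the same support properties as your $\chi$) so that the piece supported in $\{y>0\}$ becomes an admissible test function for $u$ and produces the pressure term, while the remainder lives in the thin slab $\{y\le 1\}$ and is estimated by covering with unit cells and invoking \eqref{est.StokesLip}. The only cosmetic difference is that the paper takes $L=\dashint_{B_{r,+}}p$ directly and appeals to the remark following \eqref{est.StokesLip} (that the averaging region can be shifted) rather than spelling out the telescoping argument you mention.
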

%

%
\begin{proof}
Let $(v_r, q_r)$ be the weak solution of \eqref{eq.vr.appro} subject to the Dirichlet boundary condition $v_r = u$ on $\partial Q_{r,+}$ in the sense of trace in $H^{1/2}(\partial Q_{r,+})^d$. Since $u = 0$ on $\partial Q_{r,+}\cap \{y=-1\}$, we also have $v_r = 0$ on $\partial Q_{r,+}\cap \{y=-1\}$. Note that the solvability of \eqref{eq.vr.appro} is guaranteed due to the compatibility condition
\begin{equation}
    \int_{\partial Q_{r,+}} v_r\cdot n d\sigma = \int_{\partial Q_{r,+}} u\cdot n d\sigma = \int_{Q_{r,+}} \nabla\cdot u =0.
\end{equation}
Also note that the domains for $(u,p)$ and $(v_r,q_r)$ are different. We have to use the variational equations for them separately. Recall that $u$ is automatically zero-extended. Then $u-v_r \in H_0^1(Q_{r,+})$ and $\nabla\cdot (u - v_r) = 0$ in $Q_{r,+}$. Testing the system \eqref{eq.vr.appro} against $u -v_r$, we have
\begin{equation}\label{eq.vr.var}
    \int_{Q_{r,+}} \nabla v_r \cdot \nabla(u -v_r) = 0.
\end{equation}
To use the system for $(u,p)$, we introduce a cut-off function $\eta = \eta(y)$ such that $\eta(y) = 1$ for $y\ge 1$, $\eta(y) = 0$ for $y<0$ and $|\eta'(y)|\le 2$. Testing the system \eqref{eq.S} against $(u-v_r)\eta^2$, we have
\begin{equation}\label{eq.u.var}
    \int_{B_{r,+}} \nabla u \cdot \nabla ((u-v_r) \eta^2) 
    = \int_{B_{r,+}} 
    (p - L) (u - v_r) \cdot (2\eta \nabla \eta),
\end{equation}
where $L\in \R$ is an arbitrary constant to be specified later. Combining \eqref{eq.vr.var} and \eqref{eq.u.var}, we have
\begin{equation}\label{eq.u-vr.var}
\begin{aligned}
    \int_{Q_{r,+}} (\nabla u- \nabla v_r) \cdot (\nabla u - \nabla v_r) & = \int_{B_{r,+}} \nabla u \cdot \nabla ((u-v_r) (1-\eta^2)) \\
    &\qquad +\int_{B_{r,+}} 
    (p - L) (u - v_r) \cdot (2\eta \nabla \eta). 
\end{aligned}
\end{equation}

Now we estimate the integrals on the right-hand side of \eqref{eq.u-vr.var}. Since $1-\eta^2$ is supported in $\{ y\le 1\}$, by the Poincar\'{e} inequality on the thin layer $Q_{r,+}\cap \{ y<1  \}$, we have
\begin{equation*}
\begin{aligned}
    & \bigg| \int_{B_{r,+}} \nabla u \cdot \nabla ((u-v_r) (1-\eta^2)) \bigg| \\
    &\qquad \le C\bigg( \int_{B_{r,+} \cap \{ y<1 \}} |\nabla u|^2 \bigg)^{1/2} \bigg( \int_{Q_{r,+} \cap \{y<1 \} } |\nabla u - \nabla v_r|^2 \bigg)^{1/2} \\
    & \qquad \le Cr^{(d-1)/2} \bigg( \dashint_{B_{2r,+} } |\nabla u|^2 \bigg)^{1/2} \bigg( \int_{Q_{r,+}} |\nabla u - \nabla v_r|^2 \bigg)^{1/2},
\end{aligned}
\end{equation*}
In the last inequality, after decomposing $B_{r,+} \cap \{ y<1 \}$ into finitely many sets whose volume are comparable to $B_{1,+}$, we used \eqref{est.StokesLip} on each of them.

To estimate the second integral on the right-hand side of \eqref{eq.u-vr.var}, we choose
\begin{equation*}
    L = \dashint_{B_{r,+}} p.
\end{equation*}
Since $\nabla \eta$ is supported in $\{y\le 1\}$, 
it follows from \eqref{est.StokesLip} and the Poincar\'{e} inequality that
\begin{equation*}
\begin{aligned}
    & \bigg| \int_{B_{r,+}} (p - L) (u - v_r) \cdot (2\eta \nabla \eta) \bigg| \\
    & \qquad \le C \bigg( \int_{B_{r,+} \cap \{ y<1\} } |p-L|^2 \bigg)^{1/2} \bigg( \int_{Q_{r,+} \cap \{y<1 \} } |\nabla u - \nabla v_r|^2 \bigg)^{1/2} \\
    & \qquad \le Cr^{(d-1)/2} \bigg( \dashint_{B_{2r,+} } |\nabla u|^2 \bigg)^{1/2} \bigg( \int_{Q_{r,+}} |\nabla u - \nabla v_r|^2 \bigg)^{1/2}. 
\end{aligned}
\end{equation*}
As a result, \eqref{eq.u-vr.var} gives
\begin{equation}
    \bigg( \int_{Q_{r,+}} |\nabla u - \nabla v_r|^2 \bigg)^{1/2} \le Cr^{(d-1)/2} \bigg( \dashint_{B_{2r,+} } |\nabla u|^2 \bigg)^{1/2}, 
\end{equation}
which is the desired estimate \eqref{est.Due-Dvr}. The proof is complete.
\end{proof}
%

Note that $(v_r, q_r)$ in the above lemma is smooth near the lower boundary. 
In particular, we can find a polynomial $P_r(x,y)$ such that
\begin{equation}\label{est.Dvr-DPr}
    |\nabla v_r(x,y) - \nabla P_r(x,y)| \le C_m (|x|+|y| + 1)^{m} \| \nabla^{m+1} v_r\|_{L^\infty(Q_{r/2,+})},
\end{equation}
where $P_r$ is in fact the Taylor polynomial of $v_r$ of degree $m$ centered at $(0,-1)$, i.e.,
\begin{equation}
    P_r(x,y) = \sum_{k=0}^m \frac{1}{k!} \nabla^k v_r(0,-1): (x,y+1)^{\otimes k}.
\end{equation}
We point out that $P_r(x,y)$ actually is a polynomial solution (velocity component) to the system \eqref{eq.vr.appro}. This can be verified by a simple computation.

A standard regularity result shows that
\begin{equation}\label{est. Dkvr}
    \| \nabla^{k} v_r\|_{L^\infty(Q_{r/2,+})} \le \frac{C_k}{r^{k-1}} \bigg( \dashint_{Q_{r,+} } |\nabla v_r|^2 \bigg)^{1/2} \le \frac{C_k}{r^{k-1}} \bigg( \dashint_{Q_{2r,+}} |\nabla u|^2 \bigg)^{1/2},
\end{equation}
where the second inequality follows from \eqref{est.Due-Dvr}.
For convenience, we will shift the polynomial $P_r(x)$ so that it is centered at the origin. Define
\begin{equation*}
    P_r^*(x,y) = P_r(x,y-1) = \sum_{k=0}^m \frac{1}{k!} \nabla^k v_r(0,-1): (x,y)^{\otimes k}.
\end{equation*}
By the mean value theorem and \eqref{est. Dkvr}, for $|(x,y)| \in [0, r)$, 
\begin{equation}\label{est.DPr-DPr*}
\begin{aligned}
    |\nabla P_r(x,y) - \nabla P_r^*(x,y)| & \le \frac{1}{r} \Big( \sum_{k=2}^m C_k \frac{(|x|+|y|+1)^{k-2}}{r^{k-2}} \Big) \bigg( \dashint_{Q_{2r,+}} |\nabla u|^2 \bigg)^{1/2} \\
    & \le  \frac{C_m}{r} \bigg( \dashint_{Q_{2r,+} } |\nabla u|^2 \bigg)^{1/2}.
\end{aligned}
\end{equation}
Consequently, for any $\theta \in (0,1/4)$, from \eqref{est.Dvr-DPr}, \eqref{est. Dkvr} and \eqref{est.DPr-DPr*}, we have
\begin{equation}\label{est.Dvr-DPr*}
\begin{aligned}
    \bigg( \dashint_{B_{\theta r,+}} |\nabla v_r - \nabla P_r^*|^2 \bigg)^{1/2} & \le C_m \Big( \frac{(\theta r + 1)^m}{r^m} + \frac{1}{r} \Big) \bigg( \dashint_{Q_{2r,+} } |\nabla u|^2 \bigg)^{1/2} \\
    & \le C_m \Big(\theta^m +  \frac{1}{r}\Big) \bigg( \dashint_{Q_{2r,+} } |\nabla u|^2 \bigg)^{1/2},
\end{aligned}
\end{equation}
provided that $\theta r > 1$.

On the other hand, from Proposition \ref{prop.Dwp-Dp} and \eqref{est. Dkvr}, there exists $(w_m, \pi_m) \in \mathcal{S}_m(\Omega)$ such that for any $t\in (1,r)$, 
\begin{equation}\label{est.DWr-DPr*}
    \bigg( \dashint_{Q_{t,+} } |\nabla w_m - \nabla P_r^*|^2 \bigg)^{1/2} \le C_m t^{-1/2} \bigg( \dashint_{Q_{2r,+} } |\nabla u|^2 \bigg)^{1/2}. 
\end{equation}

Combining these estimates, we have
%
\begin{corollary}
For any $\theta \in (0,1/4)$ and $\theta r > 1$, 
\begin{equation}\label{eq.Hm.thetar-2r}
    H_m(u; \theta r) 
    \le  
    (C_m \theta^m + C_{m,\theta} r^{-1/2}) H_m (u; 2r),
\end{equation}
where $C_m$ depends on $\Omega$ and $m$, and $C_{m,\theta}$ depends on $\Omega,m$ and $\theta$.
\end{corollary}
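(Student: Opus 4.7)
The plan is to combine the three ingredients \eqref{est.Due-Dvr}, \eqref{est.Dvr-DPr*}, and \eqref{est.DWr-DPr*} through a triangle inequality, and then exploit the affine invariance of the excess $H_m(\cdot;\rho)$ under subtraction of heterogeneous Stokes polynomials in order to convert a crude velocity bound on the right-hand side into $H_m(u;2r)$.

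Concretely, I would fix the pair $(w_m,\pi_m)\in\mathcal{S}_m(\Omega)$ produced in \eqref{est.DWr-DPr*} and bound $H_m(u;\theta r)$ from above by $\bigl(\dashint_{B_{\theta r,+}}|\nabla u-\nabla w_m|^2\bigr)^{1/2}$. Splitting $\nabla u-\nabla w_m=(\nabla u-\nabla v_r)+(\nabla v_r-\nabla P_r^*)+(\nabla P_r^*-\nabla w_m)$ and applying Minkowski's inequality, the three pieces can be handled separately. For the first, the inclusions $B_{\theta r,+}\subset Q_{\theta r,+}\subset Q_{r,+}$ together with the measure comparison $|B_{\theta r,+}|\gtrsim(\theta r)^d$ and \eqref{est.Due-Dvr} give a bound of order $\theta^{-d/2}r^{-1/2}\bigl(\dashint_{Q_{2r,+}}|\nabla u|^2\bigr)^{1/2}$. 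The second piece is exactly \eqref{est.Dvr-DPr*}, contributing $C_m(\theta^m+r^{-1})\bigl(\dashint_{Q_{2r,+}}|\nabla u|^2\bigr)^{1/2}$. The third piece follows from \eqref{est.DWr-DPr*} applied with $t=\theta r\in(1,r)$, combined with the same measure comparison, and yields $C_m(\theta r)^{-1/2}\bigl(\dashint_{Q_{2r,+}}|\nabla u|^2\bigr)^{1/2}$. Summing these three contributions and absorbing all $\theta$-dependent constants attached to non-leading terms into a single $C_{m,\theta}r^{-1/2}$, together with the identity $\int_{Q_{2r,+}}|\nabla u|^2=\int_{B_{2r,+}}|\nabla u|^2$ coming from the zero-extension of $u$ and $|Q_{2r,+}|\sim|B_{2r,+}|$, I obtain the preliminary bound
$$H_m(u;\theta r)\le(C_m\theta^m+C_{m,\theta}r^{-1/2})\left(\dashint_{B_{2r,+}}|\nabla u|^2\right)^{1/2}.$$

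To upgrade this right-hand side to $H_m(u;2r)$, I would apply the preliminary estimate with $u$ replaced by $u-w$ for an arbitrary $(w,\pi)\in\mathcal{S}_m(\Omega)$. Since any such $w$ solves the Stokes system in all of $\Omega$ and satisfies $w=0$ on $\Gamma_R\subset\Gamma$, the pair $(u-w,p-\pi)$ is again a weak solution of \eqref{eq.S} in $B_{R,+}$ with no-slip data on $\Gamma_R$, so every step of the derivation above applies verbatim to $u-w$. The linearity of $\mathcal{S}_m(\Omega)$ gives $H_m(u-w;\theta r)=H_m(u;\theta r)$, and taking the infimum over $w\in\mathcal{S}_m(\Omega)$ on the right-hand side turns $\bigl(\dashint_{B_{2r,+}}|\nabla(u-w)|^2\bigr)^{1/2}$ into $H_m(u;2r)$, yielding \eqref{eq.Hm.thetar-2r}.

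The main nuisance is purely bookkeeping: the estimates \eqref{est.Due-Dvr} and \eqref{est.DWr-DPr*} are stated over $Q_{\cdot,+}$ whereas $H_m$ is defined over $B_{\cdot,+}$, so one must repeatedly appeal to $|B_{r,+}|\sim|Q_{r,+}|$ and to the vanishing of $\nabla u$ on $Q_{r,+}\setminus B_{r,+}$ via the zero-extension. The hypothesis $\theta r>1$ enters through the applicability of \eqref{est.Dvr-DPr*} and to ensure $t=\theta r\in(1,r)$ in \eqref{est.DWr-DPr*}. The geometric gain $\theta^m$ originates solely from the Taylor truncation \eqref{est.Dvr-DPr*}, while the $r^{-1/2}$ decay encodes the suboptimal control inherited from the oscillating boundary in \eqref{est.Due-Dvr} and \eqref{est.DWr-DPr*}.
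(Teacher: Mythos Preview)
Your proposal is correct and follows essentially the same approach as the paper's own proof: the three-term triangle-inequality splitting via $v_r$ and $P_r^*$, the application of \eqref{est.Due-Dvr}, \eqref{est.Dvr-DPr*}, \eqref{est.DWr-DPr*}, and the final replacement of $u$ by $u-w$ for arbitrary $(w,\pi)\in\mathcal{S}_m(\Omega)$ to convert the right-hand side into $H_m(u;2r)$ are exactly what the paper does. Your version is in fact slightly more explicit about the $B_{\cdot,+}$ versus $Q_{\cdot,+}$ measure comparisons and the role of the zero-extension, which the paper leaves implicit.
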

%
\begin{proof}
By the triangle inequality, we have
\begin{equation*}
    \begin{aligned}
    H_m(u; \theta r) & = \inf_{(w_m,q_m) \in \mathcal{S}_m(\Omega)  } \bigg( \dashint_{B_{\theta r,+}} |\nabla u - \nabla w_m|^2  \bigg)^{1/2} \\
    & \le \bigg( \dashint_{B_{\theta r,+} } |\nabla u - \nabla v_r|^2 \bigg)^{1/2} + \bigg( \dashint_{B_{\theta r,+} } |\nabla P_r^* - \nabla v_r|^2 \bigg)^{1/2} \\
    & \qquad + \inf_{(w_m,q_m) \in \mathcal{S}_m(\Omega)  } \bigg( \dashint_{B_{\theta r,+}} |\nabla P_r^* - \nabla w_m |^2  \bigg)^{1/2} \\
    & \le ( C_{m,\theta} r^{-1/2} + C_m \theta^m ) 
    \bigg( \dashint_{B_{2r,+}} 
    |\nabla u|^2 \bigg)^{1/2},
    \end{aligned}
\end{equation*}
where we have used \eqref{est.Due-Dvr}, \eqref{est.Dvr-DPr*} and \eqref{est.DWr-DPr*} in the last inequality.

Finally, observe that $( u - w_m,\ p - \pi_m)$
is also a weak solution of \eqref{eq.S} for any $(w_m, \pi_m) \in \mathcal{S}_m(\Omega)$. Applying the above estimate to all such weak solutions and taking the infimum over all possible $(w_m, \pi_m) \in \mathcal{S}_m(\Omega)$, we obtain the assertion.
\end{proof}
%

%
\begin{proposition}\label{thm.Hr}
For any $r\in (1,R)$ and $\mu\in (0,1)$,
\begin{equation}\label{eq.Hm.m-mu}
    H_m(u; r) 
    \le C_{m,\mu} \Big( \frac{r}{R} \Big)^{m-\mu} H_m (u;R),
\end{equation}
where $C_{m,\mu}$ depends on $\Omega,m$ and $\mu$.
\end{proposition}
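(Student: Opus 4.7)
The estimate \eqref{eq.Hm.thetar-2r} has the standard form of a one-step Campanato decay, so the plan is to iterate it along a geometric sequence of scales and then interpolate to arbitrary radii. Given $\mu\in(0,1)$, I would first select $\theta\in(0,1/4)$ small enough (depending only on $m$ and $\mu$) that
\begin{equation*}
C_m\theta^m \le \tfrac{1}{2}(\theta/2)^{m-\mu},
\end{equation*}
equivalently $C_m 2^{m-\mu}\theta^\mu \le 1/2$; this is achievable because $\mu>0$ forces $\theta^\mu\to 0$ as $\theta\to 0^+$. Setting $\rho := \theta/2$ and writing $T := 2r$, the estimate \eqref{eq.Hm.thetar-2r} then takes the cleaner form
\begin{equation*}
H_m(u;\rho T) \le \Bigl(\tfrac{1}{2}\rho^{m-\mu} + C_{m,\theta}(T/2)^{-1/2}\Bigr)\, H_m(u;T), \qquad \rho T > 1.
\end{equation*}

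\textbf{Iteration along a geometric sequence.} Next I would fix a threshold $T_\ast = T_\ast(m,\mu)$ large enough that $C_{m,\theta}(T/2)^{-1/2} \le \tfrac{1}{2}\rho^{m-\mu}$ whenever $T\ge T_\ast$. Defining $T_k := \rho^k R$ and applying the display above for each $k$ with $T_k\ge T_\ast$ gives
\begin{equation*}
H_m(u;T_{k+1}) \le \rho^{m-\mu}\, H_m(u;T_k),
\end{equation*}
so that by induction $H_m(u;T_k) \le \rho^{k(m-\mu)} H_m(u;R) = (T_k/R)^{m-\mu} H_m(u;R)$ as long as $T_k \ge T_\ast$.

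\textbf{Interpolation to arbitrary $r$.} To pass from the discrete sequence $\{T_k\}$ to every $r\in(1,R)$, I would use the trivial quasi-monotonicity
\begin{equation*}
H_m(u;r) \le \Bigl(\frac{|B_{T,+}|}{|B_{r,+}|}\Bigr)^{1/2} H_m(u;T), \qquad r\le T,
\end{equation*}
which is immediate from the definition of $H_m$ since any minimizer on $B_{T,+}$ is a valid competitor on $B_{r,+}$. For $r\in[T_\ast,R]$, choose $k$ with $T_{k+1}\le r\le T_k$; combining the quasi-monotonicity with the iteration yields $H_m(u;r) \le C\rho^{-d/2}(T_k/R)^{m-\mu} H_m(u;R) \le C_{m,\mu}(r/R)^{m-\mu} H_m(u;R)$. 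For $r\in(1,T_\ast]$, quasi-monotonicity between $r$ and $T_\ast$ gives $H_m(u;r) \le C\, H_m(u;T_\ast)\le C_{m,\mu} T_\ast^{m-\mu}(r/R)^{m-\mu} H_m(u;R)$, and since $r\ge 1$ the prefactor $T_\ast^{m-\mu}$ is a fixed constant depending only on $m$ and $\mu$, to be absorbed into $C_{m,\mu}$.

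\textbf{Main obstacle.} The genuine work of Proposition \ref{thm.Hr} is already concentrated in the one-step bound \eqref{eq.Hm.thetar-2r}, which rests in turn on the higher-order boundary layers of Proposition \ref{thm.bl.alpha}; the remaining Campanato-type iteration will be routine. The only subtle bookkeeping point will be the perturbative error $C_{m,\theta}r^{-1/2}$ forcing the iteration to terminate at a finite scale $T_\ast = T_\ast(m,\mu)$, but because the statement only concerns $r\ge 1$, the sub-threshold regime is harmlessly absorbed into the final constant.
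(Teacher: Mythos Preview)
Your proposal is correct and follows essentially the same Campanato iteration as the paper: choose $\theta$ so that $C_m\theta^m \le \tfrac12(\theta/2)^{m-\mu}$, fix a threshold scale (the paper's $\rho_0$, your $T_\ast$) below which the $r^{-1/2}$ error is dominated, iterate along the geometric sequence $\{(\theta/2)^k R\}$, and absorb the sub-threshold range into the constant. Your version is slightly more explicit about the quasi-monotonicity used to pass from the discrete scales to all $r$, which the paper leaves implicit.
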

%
\begin{proof}
First of all, for any $\mu\in (0,1)$, we can find sufficiently small $\theta \in (0,1/4)$ such that $C_m \theta^m \le (1/2)^{m+1-\mu} \theta^{m-\mu}$. Fix such $\theta$. Set 
$$\rho_0 = \Big(C_{m,\theta} \Big(\frac12\Big)^{-m-1+\mu} \theta^{-m+\mu}\Big)^2.$$
Then, for any $r\ge \rho_0$, we have $C_{m,\theta} r^{-1/2} \le (1/2)^{m+1-\mu}\theta^{m-\mu}$. Consequently,
it follows from \eqref{eq.Hm.thetar-2r} that
\begin{equation}\label{eq.Hm.iteration}
    H_m(u;\theta r) \le (\theta/2)^{m-\mu} H_m(u; 2r).
\end{equation}
Now let $\theta_1 = \theta/2$. By an iteration, we have
\begin{equation}
    H(u;\theta_1^k R) \le \theta_1^{k(m-\mu)} H(u; R),
\end{equation}
provided $\theta_1^{k} R \ge \rho_0$. This implies \eqref{eq.Hm.m-mu} for any $r\ge \rho_0$. Finally, the case $r\in (1,\rho_0)$ follows from the case $r = \rho_0$ by enlarging the constant $C_{m,\mu}$. The proof is complete. 
\end{proof}

\subsection{Improvements}
Let $(W_r^{*}, \Pi_r^{*})$ denote the minimizer for $H_m (u; r)$, namely,
\begin{equation*}
    H_m (u;r) = \bigg( \dashint_{B_{r,+}} |\nabla u - \nabla W_r^{*}|^2  \bigg)^{1/2}.
\end{equation*}
Clearly, the minimizer $W_r^*$ in $H_m(u;r)$, as an approximation of $u$, depends essentially on the radius $r$. In the following, we will show that we can choose a fixed approximation independent of the radius $r$.
To this end, we have to take advantage of the polynomial-like properties of the heterogeneous Stokes polynomials in $\mathcal{S}_m(\Omega)$.

The next lemma describes a polynomial-like growth property for the heterogeneous polynomials.

%
\begin{lemma}[Growth estimate]\label{lem.growth}
Let $m\in \Z_{+}$. Then there exist $r_0> 1$ and $C_m>c_m>0$ depending on $m$ and $\Omega$ such that if $r\ge r_0$, then for any $(w,\pi)=(w_P,\pi_P) \in 
\mathcal{S}_m(\Omega)$ with  
\begin{equation*}
    P(x,y) = \sum_{ \substack{ |\alpha|+l \le m\\ l \ge 1}} \sum_{i=1}^d A^{\alpha,l}_i x^\alpha y^l \mathbf{e}_i, 
\end{equation*}
we have
\begin{equation}\label{est.0Dw.Growth}
\begin{aligned}
    c_m \sum_{ \substack{ |\alpha|+l \le m\\ l \ge 1}} \sum_{i=1}^d  |A^{\alpha,l}_i| \ r^{|\alpha|+l-1}
    & \le \bigg( \dashint_{B_{r,+}} |\nabla w|^2 \bigg)^{1/2} \\
    & \le C_m \sum_{ \substack{ |\alpha|+l \le m\\ l \ge 1}} \sum_{i=1}^d |A^{\alpha,l}_i| \ r^{|\alpha|+l-1}. 
\end{aligned}
\end{equation}
In particular, for any $s\ge r\ge r_0$,
\begin{equation}\label{est.Dw.Growth}
\bigg( \dashint_{B_{s,+}} |\nabla w|^2 \bigg)^{1/2} \le C_{m} \Big( \frac{s}{r} \Big)^{m-1} \bigg( \dashint_{B_{r,+}} |\nabla w|^2 \bigg)^{1/2}.
\end{equation}
\end{lemma}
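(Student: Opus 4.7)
My approach is to write $(w,\pi)=(P,Q)+\mathscr{S}[P]$ as in the definition of $\mathcal{S}_m(\Omega)$ and reduce the two-sided bound to a corresponding bound on the pure polynomial part $\nabla P$, treating $\nabla v_P$ (the velocity component of $\mathscr{S}[P]$) as a lower-order perturbation. By Proposition \ref{prop.Dwp-Dp},
\begin{equation*}
\|\nabla w-\nabla P\|_{\ul{L}^2(B_{r,+})}=\|\nabla v_P\|_{\ul{L}^2(B_{r,+})}\le C_m\sum_{\substack{|\alpha|+l\le m\\l\ge 1}}\sum_{i=1}^d|A_i^{\alpha,l}|\,r^{|\alpha|-1/2}.
\end{equation*}
Since every admissible index has $l\ge 1$, we have $r^{|\alpha|-1/2}\le r^{-1/2}\,r^{|\alpha|+l-1}$ for $r\ge 1$, so the perturbation is bounded by $C_m r^{-1/2}\Sigma_A$ with $\Sigma_A:=\sum|A_i^{\alpha,l}|r^{|\alpha|+l-1}$. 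Therefore, once I establish $c_m\Sigma_A\le\|\nabla P\|_{\ul{L}^2(B_{r,+})}\le C_m\Sigma_A$, choosing $r_0$ so that $C_m r_0^{-1/2}\le c_m/2$ yields \eqref{est.0Dw.Growth} by the triangle inequality.

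The upper bound $\|\nabla P\|_{\ul{L}^2(B_{r,+})}\le C_m\Sigma_A$ is immediate monomial by monomial since $|\nabla(x^\alpha y^l)|\le C(|x|+|y|)^{|\alpha|+l-1}$ and $B_{r,+}\subset(-r,r)^{d-1}\times(-1,r)$. The lower bound is the only nontrivial step, and I extract it by combining a volume comparison with a rescaling/compactness argument on the finite-dimensional space of admissible polynomials. Because $\R^d_+\subset\Omega$, the cube $Q_r^+:=(-r,r)^{d-1}\times(0,r)$ lies inside $B_{r,+}$, and a direct volume count gives $|Q_r^+|/|B_{r,+}|\ge 1/2$, whence
\begin{equation*}
\|\nabla P\|_{\ul{L}^2(B_{r,+})}\ge 2^{-1/2}\|\nabla P\|_{\ul{L}^2(Q_r^+)}.
\end{equation*}
The dilation $(X,Y)=(x/r,y/r)$ converts the right-hand side into $r^{-1}\|\nabla\tilde P\|_{\ul{L}^2(Q_1^+)}$, where $\tilde P(X,Y):=P(rX,rY)=\sum B_i^{\alpha,l}X^\alpha Y^l{\bf e}_i$ with $B_i^{\alpha,l}=A_i^{\alpha,l}r^{|\alpha|+l}$. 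On the finite-dimensional vector space of all such $\tilde P$, both $\tilde P\mapsto\|\nabla\tilde P\|_{\ul{L}^2(Q_1^+)}$ and $\tilde P\mapsto\sum|B_i^{\alpha,l}|$ are genuine norms — the first one because $\nabla\tilde P\equiv 0$ together with $\tilde P(X,0)=0$ (from $l\ge 1$) forces $\tilde P\equiv 0$ — so they are equivalent. Unscaling then gives $\|\nabla P\|_{\ul{L}^2(Q_r^+)}\ge c_m\Sigma_A$, as required.

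The self-similar estimate \eqref{est.Dw.Growth} is then a direct consequence of \eqref{est.0Dw.Growth}: combining the upper bound at scale $s$ with the lower bound at scale $r$, using $0\le|\alpha|+l-1\le m-1$ and $s\ge r$,
\begin{equation*}
\|\nabla w\|_{\ul{L}^2(B_{s,+})}\le C_m\sum|A_i^{\alpha,l}|s^{|\alpha|+l-1}\le C_m\Big(\frac{s}{r}\Big)^{m-1}\Sigma_A\le \frac{C_m}{c_m}\Big(\frac{s}{r}\Big)^{m-1}\|\nabla w\|_{\ul{L}^2(B_{r,+})}.
\end{equation*}
The only real obstacle is the finite-dimensional norm equivalence on $Q_1^+$ underlying the polynomial lower bound, which is purely algebraic; the threshold $r_0$ is dictated solely by the need to absorb the $O(r^{-1/2})$ boundary-layer error into the dominant polynomial contribution.
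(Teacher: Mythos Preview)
Your proof is correct and follows essentially the same strategy as the paper: decompose $w=P+v_P$, invoke Proposition~\ref{prop.Dwp-Dp} to bound the boundary-layer contribution by $C_m r^{-1/2}\Sigma_A$, use a two-sided polynomial norm equivalence for $\nabla P$, and absorb the perturbation by choosing $r_0$ large. The only difference is that the paper simply cites the two-sided bound on $\|\nabla P\|_{\ul{L}^2(B_{r,+})}$ as ``a well-known property of polynomials,'' whereas you supply a concrete proof of the lower bound via the inclusion $Q_r^+\subset B_{r,+}$, rescaling, and equivalence of norms on the finite-dimensional space $\mathcal{P}_{m,0}(\R^d_+)^d$; this extra detail is welcome since $B_{r,+}$ depends on $\Omega$ and the equivalence constants must be uniform in $r$.
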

%
%
\begin{proof}
From a well-known property of polynomials, for any $r\ge1$,
\begin{equation}\label{est.0PpartOfW.Br}
\begin{aligned}
    c_m \sum_{ \substack{ |\alpha|+l \le m\\ l \ge 1}} 
    \sum_{i=1}^d |A^{\alpha,l}_i| \ r^{|\alpha|+l-1}
    &\le \bigg( \dashint_{B_{r,+}} |\nabla P|^2 \bigg)^{1/2} \\
    &\le C_m \sum_{ \substack{ |\alpha|+l \le m\\ l \ge 1}} 
    \sum_{i=1}^d |A^{\alpha,l}_i| \ r^{|\alpha|+l-1}, 
\end{aligned}
\end{equation}
with some $C_m>c_m>0$ depending on $m$. On the other hand, Proposition \ref{prop.Dwp-Dp} implies that if $r\ge r_0$, for some sufficiently large $r_0> 1$ depending on $m$ and $\Omega$, then
\begin{equation}\label{est.0BLPartOfW.Br}
    \bigg( \dashint_{B_{r,+}} |\nabla w_P - \nabla P|^2 \bigg)^{1/2} 
    \le \frac{c_m}{2} \sum_{\substack{|\alpha|+l\le m \\ l\ge 1}}\sum_{i=1}^{d} |A^{\alpha,l}_i|\ r^{|\alpha|+l-1}.
\end{equation}
Combining \eqref{est.0PpartOfW.Br} and \eqref{est.0BLPartOfW.Br}, we obtain
\begin{equation}\label{est.Dwp.2Sides}
\begin{aligned}
    \frac{c_m}{2} \sum_{ \substack{ |\alpha|+l \le m\\ l \ge 1}} \sum_{i=1}^d  |A^{\alpha,l}_i| \ r^{|\alpha|+l-1}
    & \le \bigg( \dashint_{B_{r,+}} |\nabla w_P|^2 \bigg)^{1/2} \\
    & \le \big(C_m+\frac{c_m}{2}\big) \sum_{ \substack{ |\alpha|+l \le m\\ l \ge 1}} \sum_{i=1}^d |A^{\alpha,l}_i| \ r^{|\alpha|+l-1},
\end{aligned}
\end{equation}
for all $r\ge r_0$. This implies \eqref{est.0Dw.Growth} if the constants are rewritten. The estimate \eqref{est.Dw.Growth} directly follows from \eqref{est.0Dw.Growth}. This completes the proof. 
\end{proof}

\begin{proposition}\label{thm.Due-Dwr0}
There exists $r_0 >1$ depending on $m$ and $\Omega$ such that for any $r\in (1, R)$,
\begin{equation*}
    \bigg( \dashint_{B_{r,+}} |\nabla u - \nabla W_{r_0}^{*}|^2  \bigg)^{1/2} \le C_{m,\mu} \Big( \frac{r}{R} \Big)^{m-\mu} H_m (u;R).
\end{equation*}
\end{proposition}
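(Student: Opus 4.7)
The plan is a Campanato-type chaining argument. Let $r_0 > 1$ be the constant from Lemma \ref{lem.growth}, set $r_k = 2^k r_0$, and let $K$ be the largest integer with $r_K \le R$. For each $k$, let $W_{r_k}^*$ realize the infimum $H_m(u; r_k)$. Since $\mathcal{S}_m(\Omega)$ is a vector space (being the linear image of $\mathcal{S}_m(\R^d_+)$), the consecutive differences $W_{r_{j+1}}^* - W_{r_j}^*$ all lie in $\mathcal{S}_m(\Omega)$. The goal is to show that when summed telescopically, they decay quickly enough that the single polynomial $W_{r_0}^*$ approximates $u$ at the optimal rate at every scale $r \in (1, R)$.

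The first step is to bound the consecutive differences at the smaller radius. Applying the triangle inequality $|\nabla(W_{r_{j+1}}^* - W_{r_j}^*)|^2 \le 2|\nabla u - \nabla W_{r_{j+1}}^*|^2 + 2|\nabla u - \nabla W_{r_j}^*|^2$ inside $B_{r_j,+}$, then extending the first term to $B_{r_{j+1},+}$ at the cost of a factor $2^d$ and using the minimality inequality $H_m(u; r_j) \le 2^{d/2} H_m(u; r_{j+1})$ (which follows because $W_{r_{j+1}}^*$ is admissible at scale $r_j$), I obtain
\begin{equation*}
\bigg(\dashint_{B_{r_j,+}} |\nabla(W_{r_{j+1}}^* - W_{r_j}^*)|^2\bigg)^{1/2} \le C_d H_m(u; r_{j+1}) \le C_{m,\mu}\Big(\frac{r_{j+1}}{R}\Big)^{m-\mu} H_m(u; R),
\end{equation*}
where the last inequality is Proposition \ref{thm.Hr}.

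Since $W_{r_{j+1}}^* - W_{r_j}^* \in \mathcal{S}_m(\Omega)$, I would next upgrade this bound to the larger ball $B_{r_k,+}$ via the growth estimate \eqref{est.Dw.Growth}, gaining a factor $C_m (r_k/r_j)^{m-1}$. Applying the triangle inequality to $W_{r_k}^* - W_{r_0}^* = \sum_{j=0}^{k-1}(W_{r_{j+1}}^* - W_{r_j}^*)$ and using $r_{j+1} = 2r_j$ yields
\begin{equation*}
\bigg(\dashint_{B_{r_k,+}} |\nabla(W_{r_k}^* - W_{r_0}^*)|^2\bigg)^{1/2} \le C_{m,\mu}\, r_k^{m-1} R^{-(m-\mu)} H_m(u; R) \sum_{j=0}^{k-1} r_j^{1-\mu}.
\end{equation*}
Since $1-\mu > 0$, the geometric sum $\sum_{j=0}^{k-1} r_j^{1-\mu}$ is dominated by its last term and bounded by $C_\mu r_k^{1-\mu}$. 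Combining this with $H_m(u; r_k) \le C_{m,\mu}(r_k/R)^{m-\mu} H_m(u; R)$ via one more triangle inequality yields the proposition for $r = r_k$. For arbitrary $r \in [r_0, R]$, I would sandwich $r_k \le r \le r_{k+1}$ and use the volume ratio $|B_{r_{k+1},+}|/|B_{r,+}| \le 2^d$.

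For $r \in (1, r_0]$, I would exploit that $(u - W_{r_0}^*, p - \Pi_{r_0}^*)$ is again a weak solution of \eqref{eq.S}, and apply the large-scale Lipschitz estimate \eqref{est.StokesLip} with source ball $B_{4r_0,+}$ (which is already controlled by the previous chaining), then absorb the constant $(r_0/r)^{m-\mu} \le r_0^{m-\mu}$ into $C_{m,\mu}$ to recover the desired $(r/R)^{m-\mu}$ form. The main technical obstacle is the sharpness of the summation step: the growth factor $(r_k/r_j)^{m-1}$ from Lemma \ref{lem.growth} combines with the decay $(r_{j+1}/R)^{m-\mu}$ to give precisely $r_k^{m-1} r_j^{1-\mu} R^{-(m-\mu)}$, and the telescoping sum crucially exploits $m - 1 + (1-\mu) = m - \mu$ to produce exactly the target rate without loss.
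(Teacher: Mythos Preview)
Your proposal is correct and follows essentially the same Campanato-type chaining as the paper: bound consecutive differences $W_{r_{j+1}}^* - W_{r_j}^*$ at the smaller dyadic scale via the triangle inequality and Proposition~\ref{thm.Hr}, transfer to the larger scale via the growth estimate \eqref{est.Dw.Growth}, and sum the resulting geometric series using $m-1+(1-\mu)=m-\mu$. The only cosmetic difference is your treatment of $r\in(1,r_0]$ via the large-scale Lipschitz estimate \eqref{est.StokesLip}, whereas the paper simply absorbs the finite volume ratio $|B_{r_0,+}|/|B_{r,+}|$ into the constant; both are fine.
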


\begin{proof}
Let $r_0$ be given in Lemma \ref{lem.growth} and $r_k = 2^k r_0$ for $k\in\Z_{+}$. For each $j\in\Z_{+}$, the triangle inequality implies
\begin{equation*}
\begin{aligned}
   \bigg( \dashint_{B_{r_{j-1},+}} |\nabla W_{r_j}^{*} - \nabla W_{r_{j-1}}^{*}|^2  \bigg)^{1/2} & \le H_m (u; r_{j-1}) + CH_m(u; r_j) \\
   & \le C_{m,\mu} \Big( \frac{r_{j-1}}{R} \Big)^{m-\mu} H_m(u;R),
\end{aligned}
\end{equation*}
where we have used Proposition \ref{thm.Hr} in the last inequality. Then, by the triangle inequality and Lemma \ref{lem.growth}, we have
\begin{equation*}
    \begin{aligned}
    \bigg( \dashint_{B_{r_k,+}} |\nabla W_{r_k}^{*} - \nabla W_{r_0}^{*}|^2  \bigg)^{1/2} &
    \le \sum_{j=1}^k \bigg( \dashint_{B_{r_k,+}} |\nabla W_{r_j}^{*} - \nabla W_{r_{j-1}}^{*}|^2  \bigg)^{1/2} \\
    & \le \sum_{j=1}^k C_m \Big( \frac{r_k}{r_{j-1}} \Big)^{m-1} \bigg( \dashint_{B_{r_{j-1},+}} |\nabla W_{r_j}^{*} - \nabla W_{r_{j-1}}^{*}|^2  \bigg)^{1/2} \\
    & \le \sum_{j=1}^k C_{m,\mu} \Big( \frac{r_k}{r_{j-1}} \Big)^{m-1} \Big( \frac{r_{j-1}}{R} \Big)^{m-\mu}  H_m (u;R) \\
    & \le C_{m,\mu} \Big( \frac{r_{k}}{R} \Big)^{m-\mu} H_m (u;R).
    \end{aligned}
\end{equation*}
Consequently, for any $k\in\Z_{+}$ with $r_k<R$,
\begin{equation*}
\begin{aligned}
    & \bigg( \dashint_{B_{r_k,+}} |\nabla u - \nabla W_{r_0}^{*}|^2  \bigg)^{1/2} \\
    & \le \bigg( \dashint_{B_{r_{k},+}} |\nabla u - \nabla W_{r_k}^{*}|^2  \bigg)^{1/2} + \bigg( \dashint_{B_{r_{k},+}} |\nabla W_{r_{k}}^{*} - \nabla W_{r_0}^{*}|^2  \bigg)^{1/2} \\
    & \le C_{m,\mu} \Big( \frac{r_{k}}{R} \Big)^{m-\mu} H_m (u;R).
 \end{aligned}   
\end{equation*}
This implies the assertion for each $r\in (r_0,R)$. Finally, as before, the case $r\in (1,r_0)$ can be obtained by enlarging the constant. The proof is complete. 
\end{proof}

Next, we will improve the exponent from $m-\mu$ in Proposition \ref{thm.Hr} to $m$. First, by Proposition \ref{thm.Due-Dwr0} (applied with $m$ replaced by $m+1$), there exists $(\widetilde{W}_{r_0}^{*},\widetilde{\Pi}_{r_0}^{*}) \in \mathcal{S}_{m+1}(\Omega)$ such that for $r\in (1,R)$,
\begin{equation}\label{est.Due-DWr0.Br}
    \bigg( \dashint_{B_{r,+}} |\nabla u - \nabla \widetilde{W}_{r_0}^{*}|^2  \bigg)^{1/2} \le C_{m+1,\mu} \Big( \frac{r}{R} \Big)^{m+1-\mu} H_{m+1}(u;R).
\end{equation}
Now let $(\overline{W}_{r_0}^{*},\overline{\Pi}_{r_0}^{*})$ be the projection of $(\widetilde{W}_{r_0}^{*},\widetilde{\Pi}_{r_0}^{*})$ into $\mathcal{S}_m(\Omega)$. This means that if
\begin{equation}\label{eq.wt.Wr0}
    \widetilde{W}_{r_0}^{*}(x,y) = \sum_{\substack{|\alpha|+l\le m+1 \\ l\ge 1}}\sum_{i=1}^{d} \widetilde{A}^{\alpha,l}_i (x^\alpha y^l \mathbf{e}_i + v^{\alpha,l}_{(i)}),
\end{equation}
then
\begin{equation}\label{eq.ol.Wr0}
    \overline{W}_{r_0}^{*}(x,y) = \sum_{\substack{|\alpha|+l\le m \\ l\ge 1}}\sum_{i=1}^{d} \widetilde{A}^{\alpha,l}_i (x^\alpha y^l \mathbf{e}_i + v^{\alpha,l}_{(i)}).
\end{equation}

The next proposition proves the velocity estimate in Theorem \ref{thm.main}.

\begin{proposition}\label{thm.main.velocity}
For any $r\in (1,R)$, 
\begin{equation}\label{est.Du-DbarW}
    \bigg( \dashint_{B_{r,+}} |\nabla u - \nabla \overline{W}_{r_0}^{*}|^2  \bigg)^{1/2} \le C_{m} \Big( \frac{r}{R} \Big)^{m} \bigg( \dashint_{B_{R,+}} |\nabla u |^2  \bigg)^{1/2}.
\end{equation}
\end{proposition}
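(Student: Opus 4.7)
The plan is to bound the ``truncation error'' $D := \widetilde{W}_{r_0}^{*} - \overline{W}_{r_0}^{*}$ on its own, by noting that $D$ is again a heterogeneous Stokes polynomial, but one whose underlying polynomial has pure degree $m+1$. Indeed, from \eqref{eq.wt.Wr0} and \eqref{eq.ol.Wr0},
$$D = \sum_{\substack{|\alpha|+l=m+1\\ l\ge 1}}\sum_{i=1}^d \widetilde{A}^{\alpha,l}_i\,(x^\alpha y^l {\bf e}_i + v^{\alpha,l}_{(i)}),$$
and since the Stokes equations preserve polynomial homogeneity, the degree-$(m+1)$ part $P_D = \sum \widetilde{A}^{\alpha,l}_i x^\alpha y^l {\bf e}_i$ of the underlying Stokes polynomial of $\widetilde{W}_{r_0}^{*}$ is itself an element of $\dot{\mathcal{S}}_{m+1}(\R^d_+)$. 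Hence $D = w_{P_D} \in \mathcal{S}_{m+1}(\Omega)$, and Lemma \ref{lem.growth} (with $m$ replaced by $m+1$) applies to $D$.

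Assuming first $R \ge r_0$, evaluate \eqref{est.Due-DWr0.Br} at $r = R$; combined with the trivial bound $H_{m+1}(u;R)\le \|\nabla u\|_{\ul{L}^2(B_{R,+})}$ and the triangle inequality, this yields $\|\nabla \widetilde{W}_{r_0}^{*}\|_{\ul{L}^2(B_{R,+})}\le C_{m,\mu}\|\nabla u\|_{\ul{L}^2(B_{R,+})}$. The lower bound in \eqref{est.0Dw.Growth} applied to $\widetilde{W}_{r_0}^{*}$ at scale $R$, keeping only the top-degree coefficients, then gives
$$R^m\sum_{|\alpha|+l=m+1}\sum_i |\widetilde{A}^{\alpha,l}_i| \le C_m\|\nabla u\|_{\ul{L}^2(B_{R,+})}.$$
The upper bound of Lemma \ref{lem.growth} applied to $D$ at any scale $r\in[r_0,R)$ now yields
$$\|\nabla D\|_{\ul{L}^2(B_{r,+})} \le C_m\, r^m\sum_{|\alpha|+l=m+1}\sum_i |\widetilde{A}^{\alpha,l}_i| \le C_m (r/R)^m\|\nabla u\|_{\ul{L}^2(B_{R,+})}.$$
Combining this with the triangle inequality and \eqref{est.Due-DWr0.Br} (with any $\mu\in(0,1)$, say $\mu = 1/2$, and using $(r/R)^{m+1-\mu}\le (r/R)^m$) completes the desired bound for $r\in[r_0,R)$. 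The remaining range $r\in(1,r_0)$ is absorbed into the constant via the trivial inequality $\|\cdot\|_{\ul{L}^2(B_{r,+})}\le C(r_0)\|\cdot\|_{\ul{L}^2(B_{r_0,+})}$ together with $(r_0/r)^m\le r_0^m$ for $r\ge 1$; the small-$R$ regime $R < r_0$ is likewise handled by absorbing into the constant, using the a priori estimate \eqref{est.StokesLip}.

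The main algebraic step is the observation that truncating the top degree of $\widetilde{W}_{r_0}^{*}$ produces a heterogeneous Stokes polynomial $D$ whose underlying polynomial is purely homogeneous of degree $m+1$; this lets Lemma \ref{lem.growth} pin the growth of $\|\nabla D\|_{\ul{L}^2(B_{r,+})}$ at exactly $r^m$, and the coupled lower bound at the outer scale $R$ converts the a priori bound on $\nabla u$ into the coefficient estimate $\sum |\widetilde{A}^{\alpha,l}_i|\le C R^{-m}\|\nabla u\|_{\ul{L}^2(B_{R,+})}$. Without this observation one would merely recover the rate $(r/R)^{m+1-\mu}$ inherited from Proposition \ref{thm.Due-Dwr0} applied at order $m+1$, losing the last $\mu$ of decay.
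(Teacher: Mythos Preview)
Your proof is correct and follows essentially the same approach as the paper: define the truncation error $D=\widetilde{W}_{r_0}^{*}-\overline{W}_{r_0}^{*}\in\mathcal{S}_{m+1}(\Omega)$, use the upper bound of Lemma~\ref{lem.growth} on $D$ at scale $r$ and the lower bound on $\widetilde{W}_{r_0}^{*}$ at scale $R$ to get $\|\nabla D\|_{\ul{L}^2(B_{r,+})}\le C_m(r/R)^m\|\nabla u\|_{\ul{L}^2(B_{R,+})}$, and finish with the triangle inequality and \eqref{est.Due-DWr0.Br}. The only cosmetic difference is that the paper calls the threshold radius for Lemma~\ref{lem.growth} at order $m+1$ by $\tilde{r}_0$ rather than $r_0$.
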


\begin{proof}
Let $\tilde{r}_0>1$ be the constant in Lemma \ref{lem.growth} corresponding to $m+1$. It suffices to prove the assertion \eqref{est.Du-DbarW} for $r\in (\tilde{r}_0,R)$. Suppose $\widetilde{W}_{r_0}^{*}$ and $\overline{W}_{r_0}^{*}$ are given by \eqref{eq.wt.Wr0} and \eqref{eq.ol.Wr0}, respectively. Let $E_{r_0}^{*} := \widetilde{W}_{r_0}^{*} - \overline{W}_{r_0}^{*}$. By the structure of $E_{r_0}^{*}$, we know that
\begin{equation*}
    E_{r_0}^{*}(x,y) = \sum_{\substack{|\alpha|+l= m+1 \\ l\ge 1}}\sum_{i=1}^{d} \widetilde{A}^{\alpha,l}_i (x^\alpha y^l \mathbf{e}_i + v^{\alpha,l}_{(i)}) 
\end{equation*}
and that $E_{r_0}^{*}\in \mathcal{S}_{m+1}(\Omega)$. 
Thus Lemma \ref{lem.growth} applies to $E_{r_0}^{*}$. For $r\ge \tilde{r}_0$, we have 
\begin{equation*}
\begin{aligned}
\bigg( \dashint_{B_{r,+}} |\nabla E_{r_0}^{*}|^2 \bigg)^{1/2} 
    & \le C_{m+1} r^{m}
    \sum_{ \substack{ |\alpha|+l = m+1\\ l \ge 1}} \sum_{i=1}^d |A^{\alpha,l}_i|. 
\end{aligned}
\end{equation*}
On the other hand, by Lemma \ref{lem.growth} again, we see that 
\begin{equation*}
\begin{aligned}
    c_{m+1} \sum_{ \substack{ |\alpha|+l \le m+1\\ l \ge 1}} \sum_{i=1}^d  |A^{\alpha,l}_i| \ r^{|\alpha|+l-1}
    & \le \bigg( \dashint_{B_{r,+}} |\nabla \widetilde{W}_{r_0}^{*} |^2 \bigg)^{1/2}.
\end{aligned}
\end{equation*}
Combining the above two estimates, we have
\begin{equation*}
\begin{aligned}
    \bigg( \dashint_{B_{r,+}} |\nabla E_{r_0}^{*} |^2 \bigg)^{1/2} 
    &\le C_m \Big( \frac{r}{R} \Big)^{m} \bigg( \dashint_{B_{R,+}} |\nabla \widetilde{W}_{r_0}^{*} |^2 \bigg)^{1/2} \\
    &\le C_m \Big( \frac{r}{R} \Big)^{m}\bigg( \dashint_{B_{R,+}} |\nabla u |^2 \bigg)^{1/2},
\end{aligned}
\end{equation*}
where we have used \eqref{est.Due-DWr0.Br} in the last inequality.

Finally, another triangle inequality yields
\begin{equation*}
\begin{aligned}
    \bigg( \dashint_{B_{r,+}} |\nabla u - \nabla \overline{W}_{r_0}^{*}|^2  \bigg)^{1/2} & \le \bigg( \dashint_{B_{r,+}} |\nabla u - \nabla \widetilde{W}_{r_0}^{*}|^2  \bigg)^{1/2} + \bigg( \dashint_{B_{r,+}} |\nabla E_{r_0}^{*} |^2 \bigg)^{1/2} \\
    & \le C_m \Big( \frac{r}{R} \Big)^{m} \bigg( \dashint_{B_{R,+}} |\nabla u |^2 \bigg)^{1/2}
\end{aligned}
\end{equation*}
for $r\ge \tilde{r}_0$. This completes the proof.
\end{proof}

\subsection{Pressure estimate}\label{subsec.pressure}
Previously, we have constructed $\overline{W}_{r_0}^{*}$ as a fixed good approximation of $u$ for all $r\in (1,R)$ (where $r_0$ is given in Lemma \ref{lem.growth}). Note that there is also an associated pressure counterpart denoted by $\overline{\Pi}_{r_0}^{*}$. Moreover, it satisfies
\begin{equation}\label{eq.u-W*}
\left\{
\begin{array}{ll}
-\Delta (u - \overline{W}_{r_0}^{*})+\nabla (p - \overline{\Pi}_{r_0}^{*}) =0&\mbox{in}\ B_{R,+}\\
\nabla\cdot (u - \overline{W}_{r_0}^{*})=0&\mbox{in}\ B_{R,+} \\
u - \overline{W}_{r_0}^{*}=0&\mbox{on}\ \Gamma_{R} . 
\end{array}
\right.
\end{equation}

\begin{lemma}\label{lem.pe.Br}
For any $r\in (1, R/2)$,
\begin{equation*}
    \bigg( \dashint_{B_{r,+}} |p - \overline{\Pi}_{r_0}^{*} -  \dashint_{B_{r,+}} (p - \overline{\Pi}_{r_0}^{*})|^2 \bigg)^{1/2} \le C_m \Big( \frac{r}{R} \Big)^{m} \bigg( \dashint_{B_{R,+}} |\nabla u|^2 \bigg)^{1/2}.
\end{equation*}
\end{lemma}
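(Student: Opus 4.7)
The plan is to derive the pressure estimate as a direct corollary of the velocity estimate in Proposition \ref{thm.main.velocity} by feeding the pair $(u - \overline{W}_{r_0}^{*},\, p - \overline{\Pi}_{r_0}^{*})$---which by \eqref{eq.u-W*} is itself a weak solution of the Stokes system \eqref{eq.S} on $B_{R,+}$---into the large-scale Lipschitz bound \eqref{est.StokesLip}.

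Concretely, for $r\in (1, R/8]$, I would apply \eqref{est.StokesLip} to the difference pair with outer radius $8r$ and inner radius $r$ (legal since $r>1$ and $8r\le R$): there exists $c_r\in\R$ (taking for instance $c_r=\dashint_{B_{4r,+}}(p-\overline{\Pi}_{r_0}^{*})$) with
\[
\bigg(\dashint_{B_{r,+}} |p - \overline{\Pi}_{r_0}^{*} - c_r|^2\bigg)^{1/2} \le C\bigg(\dashint_{B_{8r,+}} |\nabla(u-\overline{W}_{r_0}^{*})|^2\bigg)^{1/2}.
\]
Proposition \ref{thm.main.velocity} then bounds the right-hand side by $C_m(r/R)^m \|\nabla u\|_{\ul{L}^2(B_{R,+})}$, and the $L^2$-minimization property of the arithmetic mean (namely $\dashint_B|f-\dashint_B f|^2\le \dashint_B|f-c|^2$ for any constant $c$) allows one to replace $c_r$ by $\dashint_{B_{r,+}}(p-\overline{\Pi}_{r_0}^{*})$ at no extra cost. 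This yields the claimed estimate for $r\le R/8$.

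For the residual range $r\in (R/8, R/2)$, the factor $(r/R)^m$ is bounded below by $8^{-m}$, so it suffices to produce the absolute bound $\|p - \overline{\Pi}_{r_0}^{*} - c\|_{\ul{L}^2(B_{r,+})} \le C_m\|\nabla u\|_{\ul{L}^2(B_{R,+})}$ for some constant $c$. I would handle this by covering $B_{r,+}$ with finitely many translated half-balls $B_{R/16,+}(x_i,0)$ (their count controlled by the dimension), applying the translation-invariant version of \eqref{est.StokesLip} on each, and reconciling the additive constants via the Bogovski operator on the John subdomain provided by Definition \ref{def.John2}, together with Proposition \ref{thm.main.velocity} applied at the full radius $R$.

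The substantive difficulty has already been overcome in Proposition \ref{thm.main.velocity}; the pressure lemma follows as a formal consequence once \eqref{est.StokesLip} is invoked, and the only mild nuisance---which I would regard as the main bookkeeping step---is the transition in the edge range $r\sim R$, where \eqref{est.StokesLip} cannot be applied with a usefully small outer radius and must therefore be stitched together via the above covering.
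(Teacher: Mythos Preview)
Your proposal is correct and rests on the same two ingredients as the paper: the velocity estimate of Proposition \ref{thm.main.velocity} applied to the difference pair $(u-\overline{W}_{r_0}^{*},\,p-\overline{\Pi}_{r_0}^{*})$, followed by a standard pressure recovery and the $L^2$-minimization property of the mean. The only difference is in packaging. You invoke the black-box Lipschitz estimate \eqref{est.StokesLip}, whose built-in radius restriction $r<R'/4$ forces the artificial case split $r\le R/8$ versus $r\in(R/8,R/2)$ and the covering-and-reconciliation argument in the second range. The paper instead unpacks that black box: for each $r\in(1,R/2)$ it takes the John subdomain $\Omega_r$ with $B_{r,+}\subset\Omega_r\subset B_{2r,+}$ from Definition \ref{def.John2}, applies the Bogovskii lemma directly on $\Omega_r$ to bound the oscillation of $p-\overline{\Pi}_{r_0}^{*}$ by $\|\nabla(u-\overline{W}_{r_0}^{*})\|_{\ul{L}^2(B_{2r,+})}$, and then feeds in Proposition \ref{thm.main.velocity} at radius $2r<R$. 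This handles the entire range $(1,R/2)$ uniformly in a single stroke, with no edge case and no covering. Your route works, but the paper's is shorter and avoids the bookkeeping you flagged as the main nuisance.
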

\begin{proof}
This follows the estimate of the velocity and a routine argument in John domains. By Definition \ref{def.John2}, there is a bounded John domain $\Omega_{r}$ such that
%
\begin{equation}
    B_{r,+} \subset \Omega_{r} \subset B_{2r,+} \subset B_{R,+}.
\end{equation}
By \eqref{eq.u-W*} and the Bogovski lemma,
\begin{equation}\label{est.p-Pi*.Or}
\begin{aligned}
    \bigg( \dashint_{\Omega_{r}} |p - \overline{\Pi}_{r_0}^{*} -  \dashint_{\Omega_{r}} (p - \overline{\Pi}_{r_0}^{*})|^2 \bigg)^{1/2} & \le C\bigg( \dashint_{\Omega_{r}} |\nabla u - \nabla \overline{W}_{r_0}^{*}|^2 \bigg)^{1/2} \\
    & \le C\bigg( \dashint_{B_{2r,+}} |\nabla u - \nabla \overline{W}_{r_0}^{*}|^2 \bigg)^{1/2} \\
    & \le C_m \Big( \frac{r}{R} \Big)^{m} \bigg( \dashint_{B_{R,+}} |\nabla u|^2 \bigg)^{1/2},
\end{aligned}
\end{equation}
where we have used \eqref{est.Du-DbarW} in the last inequality. Finally, using the fact that
\begin{equation}
    \inf_{L\in \R} \bigg( \dashint_{S} |f - L|^2 \bigg)^{1/2} = \bigg( \dashint_{S} |f - \dashint_{S} f|^2 \bigg)^{1/2}
\end{equation}
for any bounded measurable set $S$ and scalar function $f\in L^2(S)$, we have
\begin{equation}
\begin{aligned}
    \bigg( \dashint_{B_{r,+}} |p - \overline{\Pi}_{r_0}^{*} -  \dashint_{B_{r,+}} (p - \overline{\Pi}_{r_0}^{*})|^2 \bigg)^{1/2} & = \inf_{L\in R} \bigg( \dashint_{B_{r,+}} |p - \overline{\Pi}_{r_0}^{*} -  L|^2 \bigg)^{1/2} \\
    & \le C \inf_{L\in R} \bigg( \dashint_{\Omega_{r}} |p - \overline{\Pi}_{r_0}^{*} -  L|^2 \bigg)^{1/2} \\
    & \le C_m \Big( \frac{r}{R} \Big)^{m} \bigg( \dashint_{B_{R,+}} |\nabla u|^2 \bigg)^{1/2},
\end{aligned}
\end{equation}
where we have used \eqref{est.p-Pi*.Or} in the last inequality. This ends the proof.
\end{proof}

\begin{proposition}\label{thm.main.pressue}
For any $r\in (1, R/2)$, 
\begin{equation*}
    \bigg( \dashint_{B_{r,+}} |p - \overline{\Pi}_{r_0}^{*} -  \dashint_{B_{1,+}} (p - \overline{\Pi}_{r_0}^{*})|^2 \bigg)^{1/2} \le C_m \Big( \frac{r}{R} \Big)^{m} \bigg( \dashint_{B_{R,+}} |\nabla u|^2 \bigg)^{1/2}. 
\end{equation*}
\end{proposition}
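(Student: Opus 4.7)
The plan is to deduce the claim from Lemma \ref{lem.pe.Br} via a standard dyadic telescoping argument that upgrades the $r$-dependent mean $\dashint_{B_{r,+}}(p-\overline{\Pi}_{r_0}^{*})$ to the fixed mean $\dashint_{B_{1,+}}(p-\overline{\Pi}_{r_0}^{*})$. Write $f := p - \overline{\Pi}_{r_0}^{*}$ and $c_s := \dashint_{B_{s,+}} f$ for $s \ge 1$. By the triangle inequality,
\begin{equation*}
\bigg( \dashint_{B_{r,+}} |f - c_1|^2 \bigg)^{1/2} \le \bigg( \dashint_{B_{r,+}} |f - c_r|^2 \bigg)^{1/2} + |c_r - c_1|,
\end{equation*}
and the first summand is already bounded by $C_m(r/R)^m \|\nabla u\|_{\ul{L}^2(B_{R,+})}$ by Lemma \ref{lem.pe.Br}. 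It therefore suffices to bound the constant $|c_r - c_1|$ by the same quantity.

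For this, choose the integer $K \ge 0$ with $2^K \le r < 2^{K+1}$, set $r_k := 2^k$ for $0 \le k \le K$ and $r_{K+1} := r$, and telescope
\begin{equation*}
|c_r - c_1| \le \sum_{k=0}^{K} |c_{r_{k+1}} - c_{r_k}|.
\end{equation*}
Since $B_{r_k,+} \subset B_{r_{k+1},+}$ with comparable measures, for each $k$
\begin{equation*}
|c_{r_{k+1}} - c_{r_k}| = \bigg| \dashint_{B_{r_k,+}} (f - c_{r_{k+1}}) \bigg| \le C \bigg( \dashint_{B_{r_{k+1},+}} |f - c_{r_{k+1}}|^2 \bigg)^{1/2},
\end{equation*}
and Lemma \ref{lem.pe.Br} bounds the right-hand side by $C_m (r_{k+1}/R)^m \|\nabla u\|_{\ul{L}^2(B_{R,+})}$. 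The resulting sum is geometric with ratio $2^m \ge 2$, so it is dominated by its last term, giving $|c_r - c_1| \le C_m (r/R)^m \|\nabla u\|_{\ul{L}^2(B_{R,+})}$, which combined with the previous display yields the claim.

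I do not anticipate any substantive obstacle: the argument is linear and relies only on a convergent geometric sum, a classical device for converting a Campanato-type excess into one with a fixed reference constant. A minor bookkeeping point is that the dyadic scales $r_k$ must remain within the range $(1, R/2)$ where Lemma \ref{lem.pe.Br} is valid; for $r$ close to $1$ the telescoping reduces to its final term (or is empty) and the estimate is immediate, while the global factor $C_m$ absorbs the harmless discrepancy between the base radius $1$ in the statement and the scale $r_0$ at which $\overline{W}_{r_0}^{*}$ was selected.
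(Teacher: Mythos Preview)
Your proof is correct and follows essentially the same approach as the paper: the paper also bounds the difference of means $|c_r - c_1|$ via a dyadic telescoping (stated tersely as ``by a dyadic decomposition''), using Lemma~\ref{lem.pe.Br} at each scale and summing the resulting geometric series, then combines with Lemma~\ref{lem.pe.Br} and the triangle inequality to conclude. Your version simply makes the telescoping explicit.
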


\begin{proof}
Lemma \ref{lem.pe.Br} implies
\begin{equation*}
    \sup_{r\le s,t\le 2r} \bigg| \dashint_{B_{s,+}} (p - \overline{\Pi}_{r_0}^{*}) - \dashint_{B_{t,+}} (p - \overline{\Pi}_{r_0}^{*}) \bigg| \le C_m \Big( \frac{r}{R} \Big)^{m} \bigg( \dashint_{B_{R,+}} |\nabla u|^2 \bigg)^{1/2}.
\end{equation*}
By a dyadic decomposition, this yields
\begin{equation*}
    \bigg|\dashint_{B_{r,+}} (p - \overline{\Pi}_{r_0}^{*}) 
    - \dashint_{B_{1,+}} (p - \overline{\Pi}_{r_0}^{*}) \bigg| 
    \le C_m \Big( \frac{r}{R} \Big)^{m} \bigg( \dashint_{B_{R,+}} |\nabla u|^2 \bigg)^{1/2}.
\end{equation*}
This, combined with Lemma \ref{lem.pe.Br} again and the triangle inequality, leads to the assertion.
\end{proof}

\begin{proofx}{Theorem \ref{thm.main}}
Let us set
\begin{equation*}
    c_p=\dashint_{B_{1,+}} (p - \overline{\Pi}_{r_0}^{*}). 
\end{equation*}
Then the estimate \eqref{est.mainRegularity} is simply a combination of Proposition \ref{thm.main.velocity} and Proposition \ref{thm.main.pressue}.
\end{proofx}

%
\subsection{Liouville theorems}\label{subsec.liouville}
%
As a application of the large-scale estimates, we have the Liouville theorem for
\begin{equation}\label{eq.Stokes.Omega}
\left\{
\begin{array}{ll}
-\Delta u+\nabla p=0&\mbox{in}\ \Omega\\
\nabla\cdot u=0&\mbox{in}\ \Omega\\
u=0&\mbox{on}\ \Gamma. 
\end{array}
\right.
\end{equation}
\begin{proposition}\label{thm.Liouville}
Let $m\in \Z_{+}$ and $(u,p)$ be a weak solution to \eqref{eq.Stokes.Omega} satisfying
\begin{equation}\label{eq.u.grow}
    \lim_{R\to \infty} R^{-m} \| \nabla u\|_{\ul{L}^2(B_{R,+})} = 0.
\end{equation}
Then $(u,p) \in \mathcal{S}_m(\Omega)$. 
\end{proposition}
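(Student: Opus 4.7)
The plan is to invoke Theorem~\ref{thm.main} at every sufficiently large radius $R$ and take $R \to \infty$. For each such $R$, Theorem~\ref{thm.main} supplies a heterogeneous Stokes polynomial $(w_R, \pi_R) \in \mathcal{S}_m(\Omega)$ and a constant $c_{p,R}$ with
\begin{equation*}
\|\nabla u - \nabla w_R\|_{\ul{L}^2(B_{r,+})} + \|p - \pi_R - c_{p,R}\|_{\ul{L}^2(B_{r,+})} \le C\Big(\frac{r}{R}\Big)^{m}\|\nabla u\|_{\ul{L}^2(B_{R,+})}
\end{equation*}
for $r\in(1,R/2)$. Fixing any $r > r_0$, with $r_0$ from Lemma~\ref{lem.growth}, the growth assumption~\eqref{eq.u.grow} rewritten as $r^m \cdot R^{-m}\|\nabla u\|_{\ul{L}^2(B_{R,+})} \to 0$ forces the right-hand side to vanish as $R \to \infty$. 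The triangle inequality then makes both $\{\nabla w_R\}_R$ and $\{\pi_R + c_{p,R}\}_R$ Cauchy in $L^2(B_{r,+})$.

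To identify the limit, I would use the two-sided estimate~\eqref{est.0Dw.Growth} in Lemma~\ref{lem.growth}. Applied to the difference $w_R - w_{R'} \in \mathcal{S}_m(\Omega)$, the lower bound converts the $L^2$-Cauchy property on $B_{r,+}$ into a Cauchy property for the coefficient vectors $\{A_i^{\alpha,l}(R)\}_{\alpha,l,i}$ in the finite-dimensional space $\R^N$ with $N=\dim\mathcal{S}_m(\Omega)$. These coefficients thus converge to some $\{A_i^{\alpha,l}\}$, defining an element $w\in\mathcal{S}_m(\Omega)$. The upper bound in~\eqref{est.0Dw.Growth} then yields $\nabla w_R \to \nabla w$ in $L^2(B_{r',+})$ for every $r'>r_0$ (and by containment for every $r'>0$). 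Consequently $\nabla u = \nabla w$ on every $B_{r',+}$, hence on all of $\Omega$; since $u$ and $w$ both vanish on $\Gamma$, we conclude $u=w$.

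With $u=w$ in hand, subtracting the Stokes equations satisfied by $(u,p)$ and $(w,\pi)$ gives $\nabla(p-\pi) = \Delta(u-w) = 0$ on the connected domain $\Omega$, so $p = \pi + c$ for some constant $c$. Constant pressures belong to $\dot{\mathcal{S}}_1(\R^d_+) \subset \mathcal{S}_m(\R^d_+)$ and do not alter the boundary-layer correction $\mathscr{S}[P]$, so $(w,\pi+c) \in \mathcal{S}_m(\Omega)$, yielding $(u,p) \in \mathcal{S}_m(\Omega)$.

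The main obstacle is the identification step: without the lower bound in~\eqref{est.0Dw.Growth}, one could not rule out that the approximants $w_R$ drift so as to have no limit within the finite-dimensional class $\mathcal{S}_m(\Omega)$. The coefficient-wise control furnished by Lemma~\ref{lem.growth}, combined with the finite-dimensionality of $\mathcal{S}_m(\Omega)$, is precisely what closes the argument.
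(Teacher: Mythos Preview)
Your proof is correct and follows the same overall strategy as the paper: apply Theorem~\ref{thm.main} at each scale $R$, let $R\to\infty$ using the growth hypothesis~\eqref{eq.u.grow}, and exploit the finite-dimensionality of $\mathcal{S}_m(\Omega)$ to identify the limit. The execution differs in two minor but noteworthy ways. The paper works directly with the infimum over $\mathcal{S}_m(\Omega)\times\R$, observes that it is independent of $R$, sends $R\to\infty$ to make it vanish, and then (implicitly using that the infimum over a finite-dimensional set is attained) concludes $(u,p)=(w,\pi+L)$ on a fixed $B_{r,+}$; it then invokes unique continuation for the Stokes system to extend this to all of $\Omega$. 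You instead track the explicit approximants $(w_R,\pi_R,c_{p,R})$, prove they are Cauchy, and use the two-sided bound of Lemma~\ref{lem.growth} to pass to a limit in $\mathcal{S}_m(\Omega)$; because you do this for every $r'$, you obtain $\nabla u=\nabla w$ globally without appealing to unique continuation, closing instead with the elementary observation that $u-w$ is constant and vanishes on $\Gamma$. Your route is slightly longer but more self-contained, as it makes the role of finite-dimensionality explicit through Lemma~\ref{lem.growth} and sidesteps the unique-continuation black box.
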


\begin{proof}
Let $r>1$ be arbitrary and $R>r$. By Theorem \ref{thm.main} and the Poincar\'{e} inequality,
\begin{equation*}
    \inf_{\substack{(w,\pi)\in \mathcal{S}_m(\Omega)\\ L\in \R}} \Big\{ r^{-1} \|u- w\|_{\ul{L}^2(B_{r,+})}
+ \|p-\pi-L\|_{\ul{L}^2(B_{r,+})} \Big\}
\le 
C\Big(\frac{r}{R}\Big)^{m}
\|\nabla u\|_{\ul{L}^2(B_{R,+})}. 
\end{equation*}
Observe that the left-hand side is independent of $R$. Letting $R\to \infty$ and in view of \eqref{eq.u.grow}, we see that $(u,p) = (w,\pi + L)$ in $B_{r,+}$ for some $(w,\pi) \in \mathcal{S}_m(\Omega)$ and $L\in \R$. This implies the desired result by the uniqueness continuation for Stokes system. 
\end{proof}

Thanks to the theorem, we have the following intrinsic characterization of $\mathcal{S}_m(\Omega)$:
\begin{equation}
\begin{aligned}
\mathcal{S}_m(\Omega)= \{(u,p)
~|~&(u,p) \ \, \text{is a weak solution of \eqref{eq.Stokes.Omega} such that} \\
& \quad \| \nabla u\|_{\ul{L}^2(B_{R,+})}=o(R^m), \quad R\to\infty
\}.
\end{aligned}
\end{equation}
We note that the growth condition of $u$ is optimal. The result improves the Liouville theorem obtained in the previous work \cite{HPZ}, which was suboptimal about the growth condition and also restricted only to the orders $m=1,2$. 

%
\section{Effective Stokes polynomials}\label{sec.effective}
In this section, we will introduce the space of effective Stokes polynomials. As applications, we will prove Theorem \ref{thm.pointwise} in Subsection \ref{subsec.effective} and investigate the relation to the wall laws in Subsection \ref{subsec.wall.law}. A concrete example of the wall law will be given in Subsection \ref{subsec.Example}.

As in the previous section,  all the constants depending on $m\in\Z_{+}$ are denoted by $C_m$.

%
\subsection{Definition and proof of Theorem \ref{thm.pointwise}}\label{subsec.effective}
%
We first describe the effective polynomials for monomials. Let us take $(v^{\alpha}, q^{\alpha}) = (v^{\alpha,l}_{(i)}, q^{\alpha,l}_{(i)})$ given in Proposition \ref{thm.bl.alpha}, which is the boundary layer corrector for the monomial $x^\alpha y^l{\bf e}_i$. Set $(w,\pi)=(x^\alpha y^l{\bf e}_i,0) + (v^{\alpha,l}_{(i)}, q^{\alpha,l}_{(i)})$. Then by the decomposition \eqref{est2.thm.bl.alpha}, we have
\begin{equation*}
\begin{aligned}
    (w,\pi) &= (w_{{\rm poly}},\pi_{{\rm poly}}) + (w_{{\rm het}},\pi_{{\rm het}}),
\end{aligned}
\end{equation*}
where 
\begin{equation*}
\begin{aligned}
    (w_{{\rm poly}},\pi_{{\rm poly}}) 
    &:= (x^\alpha y^l{\bf e}_i + (v^{\alpha,l}_{(i)})_{{\rm poly}}, (q^{\alpha,l}_{(i)})_{{\rm poly}}), \\
    (w_{{\rm het}},\pi_{{\rm het}})
    &:=((v^{\alpha,l}_{(i)})_{{\rm het}}, (q^{\alpha,l}_{(i)})_{{\rm het}}).
\end{aligned}
\end{equation*}
Due to \eqref{est5.thm.bl.alpha}, the heterogeneous part $(w_{{\rm het}},\pi_{{\rm het}})$ decays exponentially as $y\to \infty$. Thus the polynomial part $(w_{\rm poly}, \pi_{\rm poly})$ approximates $(w,\pi)$ extremely well far away from the boundary. Based on this observation, we regard $(w_{{\rm poly}},\pi_{{\rm poly}})$ being effective when $y\gg1$ and will call it the effective part or the effective polynomial of $(w,\pi)$.

This notion is easily extended to the heterogeneous Stoke polynomials. Let $(w, \pi) = (w_P, \pi_P) \in \mathcal{S}_m(\Omega)$ be given as in \eqref{eq.wp.StokesP}. Then we define
\begin{equation}\label{eq.wp.poly}
\begin{aligned}
    w_{\rm poly} & = \sum_{\substack{|\alpha|+l\le m \\ l\ge 1}}\sum_{i=1}^{d} A^{\alpha,l}_i (x^\alpha y^l \mathbf{e}_i + (v^{\alpha,l}_{(i)})_{{\rm poly}}), \\
    \pi_{\rm poly} &= Q + \sum_{\substack{|\alpha|+l\le m \\ l\ge 1}}\sum_{i=1}^{d} A^{\alpha,l}_i (q^{\alpha,l}_{(i)})_{{\rm poly}}, 
\end{aligned}
\end{equation}
and
\begin{equation}\label{eq.wpi.het}
    \begin{aligned}
    w_{\rm het} = \sum_{\substack{|\alpha|+l\le m \\ l\ge 1}}\sum_{i=1}^{d} A^{\alpha,l}_i (v^{\alpha,l}_{(i)})_{{\rm het}}, \qquad
    \pi_{\rm het} = \sum_{\substack{|\alpha|+l\le m \\ l\ge 1}}\sum_{i=1}^{d} A^{\alpha,l}_i (q^{\alpha,l}_{(i)})_{{\rm het}}. 
\end{aligned}
\end{equation}
From the consideration above, one can regard $(w_{{\rm poly}},\pi_{{\rm poly}})$ as the effective part of $(w,\pi)$. Therefore, the space of effective Stokes polynomials is naturally defined by
\begin{equation}
    \mathcal{S}^{\rm eff}_m(\Omega) = \{(w_{\rm poly}, \pi_{\rm poly})~|~ (w, \pi) \in \mathcal{S}_m(\Omega) \}. 
\end{equation}

\begin{proposition}\label{prop.wpi.ptws}
Let $m\in\Z_{+}$ and let $r_0>1$ be the constant in Lemma \ref{lem.growth}. Then, for any $(w,\pi) \in \mathcal{S}_m(\Omega)$ and for any $(x,y) \in \Omega$ with $y\ge 4$,
\begin{equation}\label{est.w-wpoly.ptws}
\begin{aligned}
    & \sum_{j=0,1} | \nabla^j w(x,y) - \nabla^j w_{\rm poly}(x,y)| + |\pi(x,y) - \pi_{\rm poly}(x,y)|  \\
    & \qquad \le C_m \| \nabla w \|_{\ul{L}^2(B_{r_0,+})} (1+|x|)^{m-1} e^{-y/2}.
\end{aligned}
\end{equation}
\end{proposition}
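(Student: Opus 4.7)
The identity $w - w_{\rm poly} = w_{\rm het}$ and $\pi - \pi_{\rm poly} = \pi_{\rm het}$ coming from \eqref{eq.wp.poly}--\eqref{eq.wpi.het} reduces the problem to estimating $|\nabla^j w_{\rm het}(x,y)| + |\pi_{\rm het}(x,y)|$ for $y \ge 4$. The plan is to combine the pointwise exponential decay \eqref{est5.thm.bl.alpha} for the monomial building blocks $(v^{\alpha,l}_{(i)})_{\rm het}$ and $(q^{\alpha,l}_{(i)})_{\rm het}$ with a coefficient estimate extracted from the two-sided growth bound in Lemma \ref{lem.growth}.

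First, applying \eqref{est5.thm.bl.alpha} to each term in the explicit expansion \eqref{eq.wpi.het} and summing via the triangle inequality gives
\begin{equation*}
|\nabla^j w_{\rm het}(x,y)| + |\pi_{\rm het}(x,y)|
\le C_m \sum_{\substack{|\alpha|+l\le m \\ l\ge 1}} \sum_{i=1}^{d} |A^{\alpha,l}_i|\,(1+|x|)^{|\alpha|} e^{-y/2}
\end{equation*}
for $y \ge 4$ and $j=0,1$. Because $l \ge 1$ and $|\alpha|+l \le m$, one has $|\alpha| \le m-1$, and since $|x|\ge 0$, the factor $(1+|x|)^{|\alpha|}$ is bounded by $(1+|x|)^{m-1}$. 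Thus the pointwise estimate reduces to controlling the absolute sum of coefficients.

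Next, I would invoke the lower bound in \eqref{est.0Dw.Growth} with $r=r_0$: since $l\ge 1$ forces $|\alpha|+l-1 \ge 0$ and $r_0 > 1$, each factor $r_0^{|\alpha|+l-1}$ is at least $1$, so
\begin{equation*}
\sum_{\substack{|\alpha|+l\le m \\ l\ge 1}} \sum_{i=1}^{d} |A^{\alpha,l}_i|
\le c_m^{-1} \|\nabla w\|_{\ul{L}^2(B_{r_0,+})}.
\end{equation*}
Feeding this into the previous display yields \eqref{est.w-wpoly.ptws}, up to relabeling the constant $C_m$. There is no real obstacle here; the argument is a direct synthesis of the pointwise decay from Proposition \ref{thm.bl.alpha} with the polynomial-type coercivity from Lemma \ref{lem.growth}, and the only minor point to verify is precisely the nonnegativity of the exponent $|\alpha|+l-1$, which follows from $l\ge 1$.
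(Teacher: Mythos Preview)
Your proof is correct and follows essentially the same approach as the paper: apply the pointwise decay \eqref{est5.thm.bl.alpha} to each term of \eqref{eq.wpi.het}, bound $(1+|x|)^{|\alpha|}$ by $(1+|x|)^{m-1}$ using $|\alpha|\le m-1$, and then control the coefficient sum via the lower bound in Lemma \ref{lem.growth} at $r=r_0$. Your explicit remark that $r_0^{|\alpha|+l-1}\ge 1$ is the only detail the paper leaves implicit.
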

\begin{proof}
Assume that $(w,\pi) = (w_P, \pi_P)$ takes the form of \eqref{eq.wp.StokesP}. Then, it follows from the definitions \eqref{eq.wp.poly} and \eqref{eq.wpi.het} as well as the estimate \eqref{est5.thm.bl.alpha} that, when $y\ge4$, 
\begin{equation}
\begin{aligned}
    & \sum_{j=0,1} | \nabla^j w(x,y) - \nabla^j w_{\rm poly}(x,y)| + |\pi(x,y) - \pi_{\rm poly}(x,y)| \\
    & \qquad \le C_m \sum_{\substack{|\alpha|+l\le m \\ l\ge 1}}\sum_{i=1}^{d} |A^{\alpha,l}_i| (1+|x|)^{|\alpha|} e^{-y/2} \\
    & \qquad \le C_m (1+|x|)^{m-1} e^{-y/2} \sum_{\substack{|\alpha|+l\le m \\ l\ge 1}}\sum_{i=1}^{d} |A^{\alpha,l}_i| \\
    & \qquad \le C_m (1+|x|)^{m-1} e^{-y/2}  \| \nabla w \|_{\ul{L}^2(B_{r_0,+})},
\end{aligned}
\end{equation}
where the last inequality follows from Lemma \ref{lem.growth}. The proof is complete. 
\end{proof}

Now we prove Theorem \ref{thm.pointwise} using Proposition \ref{prop.wpi.ptws}.

\begin{proofx}{Theorem \ref{thm.pointwise}}
Let $r_0>1$ be the constant in Lemma \ref{lem.growth} and let $R>2r_0$.
First of all, by Theorem \ref{thm.main}, there exist $(w,\pi) \in \mathcal{S}_m(\Omega)$ and a constant $c_p$ such that
\begin{equation}\label{est.u-w.p-pi}
    \|\nabla u- \nabla w\|_{\ul{L}^2(B_{r,+})}
+ \|p-\pi-c_p\|_{\ul{L}^2(B_{r,+})}
\le 
C\Big(\frac{r}{R}\Big)^{m}
\|\nabla u\|_{\ul{L}^2(B_{R,+})}.
\end{equation}
Let $(x,y) \in B_{R/2,+}$ and $y\ge4$. Put $r := |(x,y)|$. Without loss of generality, assume $1<r<R/5$. Then the interior and boundary Lipschitz estimates (see, e.g., \eqref{est.StokesLip}) for the solution $(u-w, p-\pi-c_p)$ implies
\begin{equation}\label{est.Du-Dw.pt}
\begin{aligned}
    |\nabla u(x,y) - \nabla w(x,y)| 
    & \le C \bigg( \dashint_{Q_{y/2}(x,y)} |\nabla (u-w)|^2 \bigg)^{1/2} \\
    & \le C \bigg( \dashint_{B_{2y,+}(x,0)} |\nabla (u-w)|^2 \bigg)^{1/2} \\
    & \le C \bigg( \dashint_{B_{2r,+}(x,0)} |\nabla (u-w)|^2 \bigg)^{1/2} \\
    & \le C \bigg( \dashint_{B_{3r,+}} |\nabla (u-w)|^2 \bigg)^{1/2} \\
    & \le C_m \Big(\frac{r}{R}\Big)^{m}
\|\nabla u\|_{\ul{L}^2(B_{R,+})}.
\end{aligned}
\end{equation}

To estimate the pressure, since $p-\pi - c_p$ is harmonic in $\Omega$, the interior estimate (such as the mean value property) implies that
\begin{equation*}
\begin{aligned}
    |p(x,y) - \pi(x,y) - c_p| & \le C \bigg( \dashint_{Q_{y/2}(x,y)} |p-\pi - c_p|^2 \bigg)^{1/2} \\
    & \le C \bigg( \dashint_{B_{2y,+}(x,0)} |p-\pi - c_p|^2 \bigg)^{1/2} \\
    & \le C\bigg( \dashint_{B_{2y,+}(x,0)} |p-\pi - c_p - \dashint_{B_{2r,+}(x,0)} (p-\pi-c_p) |^2 \bigg)^{1/2} \\
    & \qquad + C\bigg| \dashint_{B_{2r,+}(x,0)} (p-\pi-c_p) \bigg| \\
    & \le C\bigg( \dashint_{B_{4r,+}(x,0)} |\nabla (u-w)|^2 \bigg)^{1/2} + C\|p-\pi-c_p\|_{\ul{L}^2(B_{4r,+})} \\
    & \le C\Big(\frac{r}{R}\Big)^{m}
\|\nabla u\|_{\ul{L}^2(B_{R,+})},
\end{aligned}
\end{equation*}
where we have used \eqref{est.StokesLip} and \eqref{est.u-w.p-pi} in the last two inequalities.

On the other hand, let $(w_{\rm poly}, \pi_{\rm poly})$ be the effective Stokes polynomial of $(w,\pi)$. Then Proposition \ref{prop.wpi.ptws} implies
\begin{equation*}
\begin{aligned}
    & |\nabla w(x,y) - \nabla w_{\rm poly}(x,y)| + |\pi(x,y) - \pi_{\rm poly}(x,y) | \\
    & \qquad \le C_m \| \nabla w \|_{\ul{L}^2(B_{r_0,+})} (1+|x|)^{m-1} e^{-y/2} \\
    & \qquad \le C_m \| \nabla w \|_{\ul{L}^2(B_{R/2,+})} (1+|x|)^{m-1} e^{-y/2} \\
    & \qquad \le C_m \| \nabla u \|_{\ul{L}^2(B_{R,+})} (1+|x|)^{m-1} e^{-y/2},
\end{aligned}
\end{equation*}
where we have used \eqref{est.u-w.p-pi} in the last inequality. Combining the above three estimates, we obtain the desired estimate \eqref{est.AporByEff} by the triangle inequality.

Finally, using the interior and boundary Lipschitz estimates, as well as the Poincar\'{e} inequality, we have
\begin{equation*}
    \begin{aligned}
    | u(x,y) - w(x,y)| & \le C \bigg( \dashint_{B_{y/2}(x,y)} | u-w|^2 \bigg)^{1/2} \\
    & \le C \bigg( \dashint_{B_{2y,+}(x,0)} | u-w|^2 \bigg)^{1/2} \\
    & \le Cy \bigg( \dashint_{B_{2y,+}(x,0)} |\nabla (u-w)|^2 \bigg)^{1/2} \\
    & \le  C_m y \Big(\frac{r}{R}\Big)^{m}
\|\nabla u\|_{\ul{L}^2(B_{R,+})},
    \end{aligned}
\end{equation*}
where we have used the similar argument as \eqref{est.Du-Dw.pt} in the last inequality. This, combined with \eqref{est.w-wpoly.ptws} again, leads to  \eqref{est.AporByEff.1}. Thus, the proof is complete.
\end{proofx}

\subsection{Higher-order wall laws}\label{subsec.wall.law}
Our aim here is to identify the (local) wall laws described in the introduction. As seen from Proposition \ref{prop.HigherWallLaw} below, these depend only on the structure of $\partial \Omega$, not on the specific elements in $\mathcal{S}_m^{\rm eff}(\Omega)$. This indicates that the wall laws are intrinsic to the domain $\Omega$.

Firstly we describe how the Navier wall law will be derived in our context. Let us take $(v^{\alpha}, q^{\alpha}) = (v^{\alpha,l}_{(i)}, q^{\alpha,l}_{(i)})$ in Proposition \ref{thm.bl.alpha}. The general element in $\mathcal{S}_1^{\rm eff}(\Omega)$ takes a form of
\begin{equation*}
w_{\rm poly}(x,y) = \sum_{i=1}^{d-1} c_i (y \mathbf{e}_i + T_{(i)}),
\qquad T_{(i)}:=\lim_{y\to0} v^{0,1}_{(i)}(x,y), 
\end{equation*}
where $T_{(i)}$ is the constant vector known as the first-order boundary layer tail. As a side note, we mention that $T_{(i)}\cdot{\bf e}_d=0$ by the divergence-free condition. Now if we set 
\begin{equation}\label{def.Phi.01}
    \Phi^{0,1} = (T_{(1)}, T_{(2)},\cdots, T_{(d-1)}, 0) \in \R^{d\times d}, 
\end{equation}
then we recover the classical Navier wall law, or the Navier slip condition, as
\begin{equation}\label{eq.walllaw.1st}
    w_{\rm poly}(x,0) = \Phi^{0,1} \partial_y w_{\rm poly}(x,0)
    \quad \text{for any }
    (w_{\rm poly}, \pi_{\rm poly}) \in \mathcal{S}_1^{\rm eff}(\Omega).
\end{equation}

Next let $W^{\alpha,l}_{(i)}$ denote the effective polynomial of $x^\alpha y^l \mathbf{e}_i + v^{\alpha,l}_{(i)}$ in the meaning of Subsection \ref{subsec.effective}. Then we define the matrix-valued polynomial $W^{\alpha,l}=W^{\alpha,l}(x,y)$ by
$$W^{\alpha,l} = (W^{\alpha,l}_{(1)}, W^{\alpha,l}_{(2)},\cdots, W^{\alpha,l}_{(d)}).$$ 
We also define the matrix-valued polynomial $\Phi^{\alpha,l}=\Phi^{\alpha,l}(x)$ recursively. Starting from $\Phi^{0,1}$ in \eqref{def.Phi.01}, for $(\alpha,l)$ with $|\alpha|+l=m$, we define
\begin{equation}\label{eq.Phi.betak}
    \Phi^{\alpha,l}(x) 
    = \frac{1}{\alpha! l!} \bigg(W^{\alpha,l}(x,0) 
    - \sum_{\substack{|\beta|+k\le m-1 \\ \beta\ge 0, k\ge1}} 
    \Phi^{\beta,k}(x) 
    \partial_x^\beta \partial_y^k W^{\alpha,l}(x,0)
    \bigg). 
\end{equation}

The higher-order wall laws are described by $\{\Phi^{\alpha,l}\}_{\alpha\ge0,l\ge1}$. Indeed, we have

\begin{proposition}\label{prop.HigherWallLaw}
Let $m\in\Z_{+}$. Then, for any $(w_{\rm poly}, \pi_{\rm poly}) \in \mathcal{S}_m^{\rm eff}(\Omega)$, 
\begin{equation}\label{eq.walllaw}
\begin{aligned}
    w_{\rm poly}(x,0) 
    &= \sum_{\substack{|\alpha|+l \le m \\ \alpha\ge 0, l\ge 1}} 
    \Phi^{\alpha,l}(x) 
    \partial_x^\alpha \partial_y^l 
    w_{\rm poly}(x,0). 
\end{aligned}
\end{equation}
\end{proposition}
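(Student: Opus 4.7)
The plan is to exploit the linearity of the claimed identity and reduce it to a purely algebraic $d\times d$ matrix identity for each basis block $W^{\alpha,l}$. By \eqref{eq.wp.poly}, every $(w_{\rm poly},\pi_{\rm poly}) \in \mathcal{S}_m^{\rm eff}(\Omega)$ is a linear combination of the columns $W^{\alpha,l}_{(i)}$ with $|\alpha|+l\le m$, $l\ge 1$ and $i\in\{1,\dots,d\}$. Since both sides of \eqref{eq.walllaw} are $\R$-linear in $w_{\rm poly}$, it suffices to establish, for each fixed $(\alpha,l)$ with $|\alpha|+l\le m$ and $l\ge 1$, the matrix identity
\begin{equation*}
W^{\alpha,l}(x,0) \;=\; \sum_{\substack{|\beta|+k\le m \\ \beta\ge 0,\, k\ge 1}} \Phi^{\beta,k}(x)\, \partial_x^\beta \partial_y^k W^{\alpha,l}(x,0).
\end{equation*}

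The heart of the argument is a degree count. Combining the expansion \eqref{eq.vpoly.vhet} with the bound $|V^\beta_{\rm poly}(y)|\le C(1+|y|)^{|\beta|}$ from Proposition \ref{prop.bl.beta}, each summand $x^{\alpha-\beta} V^{\beta}_{\rm poly}(y)$ in $(v^{\alpha,l}_{(i)})_{\rm poly}$ has total degree in $(x,y)$ at most $|\alpha-\beta|+|\beta|=|\alpha|$. Hence $W^{\alpha,l}_{(i)}(x,y) = x^\alpha y^l\mathbf{e}_i + (v^{\alpha,l}_{(i)})_{\rm poly}(x,y)$ is a polynomial of total degree exactly $|\alpha|+l$ with unique leading monomial $x^\alpha y^l\mathbf{e}_i$. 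Consequently $\partial_x^\beta\partial_y^k W^{\alpha,l}\equiv 0$ whenever $|\beta|+k>|\alpha|+l$, so all such terms drop out of the matrix identity. For $|\beta|+k=|\alpha|+l$, the correction $(v^{\alpha,l}_{(i)})_{\rm poly}$ contributes nothing since its total degree is at most $|\alpha|<|\beta|+k$, while evaluation of the leading term at $y=0$ kills every derivative except the one with $k=l$; combined with $\beta\le\alpha$ and $|\beta|=|\alpha|$ this forces $\beta=\alpha$, yielding $\partial_x^\alpha\partial_y^l W^{\alpha,l}(x,0)=\alpha!\,l!\,I_d$ while all other top-order derivatives at $y=0$ vanish.

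Isolating the single surviving top-order contribution $\alpha!\,l!\,\Phi^{\alpha,l}(x)$, the matrix identity reduces to
\begin{equation*}
\alpha!\,l!\,\Phi^{\alpha,l}(x) \;=\; W^{\alpha,l}(x,0) - \sum_{\substack{|\beta|+k\le |\alpha|+l-1 \\ \beta\ge 0,\,k\ge 1}} \Phi^{\beta,k}(x)\,\partial_x^\beta\partial_y^k W^{\alpha,l}(x,0),
\end{equation*}
which, upon dividing by $\alpha!\,l!$, is precisely the recursive definition \eqref{eq.Phi.betak} of $\Phi^{\alpha,l}$ (with the $m$ there playing the role of $|\alpha|+l$ here). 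Thus the matrix identity holds by the very construction of the family $\{\Phi^{\alpha,l}\}$, and the wall law \eqref{eq.walllaw} follows for all of $\mathcal{S}_m^{\rm eff}(\Omega)$. The proof is an algebraic verification that carries no genuine analytic obstacle; the only bookkeeping subtlety is confirming the total-degree bound on $(v^{\alpha,l}_{(i)})_{\rm poly}$, which is immediate from the structure \eqref{eq.vpoly.vhet} together with the polynomial estimate for $V^\beta_{\rm poly}$ in Proposition \ref{prop.bl.beta}.
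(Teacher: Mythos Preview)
Your proof is correct, and it takes a somewhat different route from the paper's. The paper argues by induction on $m$: assuming the wall law at order $m-1$, it subtracts from a given $w_{\rm poly}\in\mathcal{S}_m^{\rm eff}(\Omega)$ its top-degree contribution $\sum_{|\alpha|+l=m}\frac{1}{\alpha!l!}W^{\alpha,l}\partial_x^\alpha\partial_y^l w_{\rm poly}(x,0)$, observes that the remainder lies in $\mathcal{S}_{m-1}^{\rm eff}(\Omega)$, applies the inductive hypothesis, and then unpacks using the recursive definition \eqref{eq.Phi.betak}. You instead bypass induction entirely: by linearity you reduce to a matrix identity for each block $W^{\alpha,l}$, and a degree count (resting on $\deg(v^{\alpha,l}_{(i)})_{\rm poly}\le|\alpha|$, which follows from \eqref{eq.vpoly.vhet} and the polynomial bound in Proposition~\ref{prop.bl.beta}) isolates the single surviving top-order term $\partial_x^\alpha\partial_y^l W^{\alpha,l}(x,0)=\alpha!\,l!\,I_d$, after which the identity collapses to the defining recursion \eqref{eq.Phi.betak} itself. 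Both arguments ultimately hinge on the same recursive structure of the $\Phi^{\alpha,l}$, but yours makes the algebraic mechanism more explicit and actually proves a slightly stronger statement (the matrix identity holds for every $W^{\alpha,l}$, not just for the specific linear combinations arising from Stokes polynomials). The paper's inductive presentation, on the other hand, makes the hierarchical relation between wall laws of successive orders more transparent.
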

\begin{proof}
The proof uses an induction on the order $m$. We already proved the case when $m = 1$ in \eqref{def.Phi.01} and \eqref{eq.walllaw.1st}. Let us assume that \eqref{eq.walllaw} holds for $m-1$. Let $(w_{\rm poly}, \pi_{\rm poly}) \in \mathcal{S}_m^{\rm eff}(\Omega)$. Since the highest degree of $w_{\rm poly}$ is $m$, we can easily find its leading term as 
\begin{equation*}
    \frac{1}{\alpha! l!}
    \sum_{\substack{|\alpha|+l = m \\ \alpha\ge 0, l\ge 1}} 
    W^{\alpha,l}(x,y) \partial_x^\alpha \partial_y^l w_{\rm poly}(x,0).
\end{equation*}
Define
\begin{equation}\label{eq.wpoly.m-1}
    \widetilde{w}_{\rm poly}(x,y) 
    = w_{\rm poly}(x,y) 
    - \frac{1}{\alpha! l!}
    \sum_{\substack{|\alpha|+l = m \\ \alpha\ge 0, l\ge 1}} 
    W^{\alpha,l}(x,y) \partial_x^\alpha \partial_y^l w_{\rm poly}(x,0).
\end{equation}
Similarly, we can define the corresponding pressure part $\widetilde{\pi}_{\rm poly}(x,y)$. Since 
$$(\widetilde{w}_{\rm poly}(x,y), \widetilde{\pi}_{\rm poly}(x,y)) \in \mathcal{S}_{m-1}^{\rm eff}(\Omega),$$ 
by the inductive assumption, one has 
\begin{equation*}
\begin{aligned}
    \widetilde{w}_{\rm poly}(x,0)
    &= \sum_{\substack{|\beta|+k \le m-1 \\ \beta\ge 0, k\ge 1}} 
    \Phi^{\beta,k}(x) 
    \partial_x^\beta \partial_y^k 
    \widetilde{w}_{\rm poly}(x,0). 
\end{aligned}
\end{equation*}
Combined with \eqref{eq.wpoly.m-1}, we have
\begin{equation}\label{eq.wpoly.m-2}
\begin{aligned}
    \widetilde{w}_{\rm poly}(x,0)
    &= \sum_{\substack{|\beta|+k \le m-1 \\ \beta\ge 0, k\ge 1}} 
    \Phi^{\beta,k}(x) 
    \partial_x^\beta \partial_y^k 
    w_{\rm poly}(x,0) \\
    &\quad
    - \frac{1}{\alpha! l!}
    \sum_{\substack{|\alpha|+l = m \\ \alpha\ge 0, l\ge 1}} 
    \sum_{\substack{|\beta|+k \le m-1 \\ \beta\ge 0, k\ge 1}}
    \Phi^{\beta,k}(x) 
    \partial_x^\beta \partial_y^k W^{\alpha,l}(x,0) 
    \partial_x^\alpha \partial_y^l w_{\rm poly}(x,0). 
\end{aligned}
\end{equation}
Again by \eqref{eq.wpoly.m-1}, \eqref{eq.wpoly.m-2}, and the definition of $\Phi^{\alpha,l}$ with $|\alpha|+l=m$ in \eqref{eq.Phi.betak}, we obtain 
\begin{equation}
\begin{aligned}
    &w_{\rm poly}(x,0) \\
    &= 
    \widetilde{w}_{\rm poly}(x,0)
    + \frac{1}{\alpha! l!}
    \sum_{\substack{|\alpha|+l = m \\ \alpha\ge 0, l\ge 1}} 
    W^{\alpha,l}(x,0) \partial_x^\alpha \partial_y^l w_{\rm poly}(x,0)  \\
    &= 
    \sum_{\substack{|\beta|+k \le m-1 \\ \beta\ge 0, k\ge 1}} 
    \Phi^{\beta,k}(x) 
    \partial_x^\beta \partial_y^k w_{\rm poly}(x,0) \\
    &\quad 
    + \frac{1}{\alpha! l!}
    \sum_{\substack{|\alpha|+l = m \\ \alpha\ge 0, l\ge 1}} 
    \bigg(
    W^{\alpha,l}(x,0) 
    - \sum_{\substack{|\beta|+k \le m-1 \\ \beta\ge 0, k\ge 1}}
    \Phi^{\beta,k}(x)
    \partial_x^\beta \partial_y^k W^{\alpha,l}(x,0) 
    \bigg)
    \partial_x^\alpha \partial_y^l w_{\rm poly}(x,0) \\
    &= \sum_{\substack{|\alpha|+l \le m \\ \alpha\ge 0, l\ge 1}} 
    \Phi^{\alpha,l}(x) 
    \partial_x^\alpha \partial_y^l 
    w_{\rm poly}(x,0). 
\end{aligned}
\end{equation}
This is the desired relation \eqref{eq.walllaw}. The proof is complete. 
\end{proof}

\subsection{An example for $d= m=2$}\label{subsec.Example}
To provide a concrete example, we describe the second-order wall law for two-dimensional flows in $\Omega$. A basis of the space $\mathcal{S}_2(\R^{2}_+)$ can be found as  
\begin{equation*}
\begin{aligned}
(0,1), \quad (y {\bf e}_1, 0), \quad (y^2 {\bf e}_1, 2x), \quad (-2xy{\bf e_1} + y^2{\bf e_2}, 2y). 
\end{aligned}
\end{equation*}

We would like to make explicit the elements in $\mathcal{S}_2^{\rm eff}(\Omega)$, which is nothing more than determining the effective polynomial corresponding to the following velocity field 
\begin{equation}\label{eq0.subsec.Example}
\begin{aligned}
u(x,y):=c_1 y {\bf e}_1 + c_2 y^2 {\bf e}_1 + c_3(-2xy{\bf e_1} + y^2{\bf e_2}),
\quad c_i\in\R.
\end{aligned}
\end{equation}

Let $\alpha\in\Z_{\ge0}$, $l\in\Z_{+}$ and $i\in\{1,2\}$. By Proposition \ref{thm.bl.alpha}, the polynomial part of the boundary layer $v^{\alpha,l}_{(i)}$ correcting $x^\alpha y^l {\bf e}_i$ is given by 
\begin{equation*}
\begin{aligned}
&(v^{\alpha,l}_{(i)})_{\rm poly}(x,y) =
\sum_{\beta=0}^{\alpha} \dbinom{\alpha}{\beta} x^{\alpha-\beta} 
(V^{\beta,l}_{(i)})_{\rm poly}(y)
\end{aligned}
\end{equation*}
with $(V^{\beta,l}_{(i)})_{\rm poly}(y)=V^{\beta}_{\rm poly}(y)$ determined recursively by
\begin{equation}\label{eq1.subsec.Example}
V^{0}_{\rm poly}(y) 
= V^{0}_{\rm poly}
=
\left\{ 
\begin{array}{ll}
(V^{0}_{\rm const})_1 {\bf e}_1 \quad &\text{if } \ i=1, \\
V^{0}_{\rm const} \quad &\text{if } \ i=2,
\end{array} \right.
\quad \text{and} \quad
Q^{0}_{\rm poly}(y) = 0,
\end{equation}
and from the proof of Proposition \ref{prop.bl.beta}, 
\begin{equation}\label{eq2.subsec.Example}
\begin{aligned}
V^{\beta}_{\rm poly}(y) 
&= V^{\beta}_{\rm const} 
- \bigg(\int_{0}^{y} \dbinom{\beta}{1} (V^{\beta-1}_{\rm poly})_1(t)\dd t \bigg) {\bf e}_2 \\
&\quad 
+ \bigg(\int_{0}^{y}\int_{0}^{t} 
\bigg(-2\dbinom{\beta}{2} (V^{\beta-2}_{\rm poly})_1(s)
+ \dbinom{\beta}{1} Q^{\beta-1}_{\rm poly}(s) \bigg) \dd s\dd t\bigg) {\bf e}_1, \\
Q^{\beta}_{\rm poly}(y) 
&= \bigg(\int_{0}^{y} 2\dbinom{\beta}{2} (V^{\beta-2}_{\rm poly})_2(t)\dd t\bigg)
-\dbinom{\beta}{1} (V^{\beta-1}_{\rm poly})_1(y), 
\end{aligned}
\end{equation}
where all the constants are gathered together and denoted by $V^{\beta}_{\rm const}$ in the first line of \eqref{eq2.subsec.Example}. In \eqref{eq1.subsec.Example}, the second component of $V^{0}_{\rm poly}$ is zero when $i=1$, because of the divergence-free condition and the boundary condition. The proof for the 3D case can be found in \cite[Proposition 3]{HP20}. The argument for the case of general dimension is the same.

The effective polynomials corresponding to $y {\bf e}_1$, $y^2 {\bf e}_1$, $y^2{\bf e_2}$ in \eqref{eq0.subsec.Example} are, respectively,
\begin{equation*}
\begin{aligned}
\big(y + ((V^{0,1}_{(1)})_{\rm const})_1\big) {\bf e}_1, \qquad
\big(y^2 + ((V^{0,2}_{(1)})_{\rm const})_1\big) {\bf e}_1, \qquad
y^2{\bf e_2} + (V^{0,2}_{(2)})_{\rm const}.
\end{aligned}
\end{equation*}
Thus we focus on the effective polynomial corresponding to $-2xy{\bf e_1}$ in \eqref{eq0.subsec.Example}, namely $-2(xy{\bf e_1}+(v^{1,1}_{(1)})_{\rm poly})$. By a direct computation, one has 
\begin{equation*}
\begin{aligned}
(v^{1,1}_{(1)})_{\rm poly}(x,y) 
&= x (V^{0,1}_{(1)})_{\rm poly}(y) 
+ (V^{1,1}_{(1)})_{\rm poly}(y) \\
&= x ((V^{0,1}_{(1)})_{\rm const})_1 {\bf e}_1 
+ (V^{1,1}_{(1)})_{\rm const}
- y ((V^{0,1}_{(1)})_{\rm const})_1{\bf e}_2. 
\end{aligned}
\end{equation*}
Therefore, we have 
\begin{equation*}
\begin{aligned}
&-2\big(xy{\bf e_1} 
+ (v^{1,1}_{(1)})_{\rm poly}(x,y)\big) \\
&= 
-2\big(xy 
+ x ((V^{0,1}_{(1)})_{\rm const})_1\big){\bf e_1}
+ 2y ((V^{0,1}_{(1)})_{\rm const})_1 {\bf e}_2 
- 2 (V^{1,1}_{(1)})_{\rm const}. 
\end{aligned}
\end{equation*}
Combining the above, we see that the general element in $\mathcal{S}_2^{\rm eff}(\Omega)$ is written as 
\begin{equation*}
\begin{aligned}
w(x,y) 
&=
c_1\big(y + ((V^{0,1}_{(1)})_{\rm const})_1\big) {\bf e}_1
+ c_2\big(y^2 + ((V^{0,2}_{(1)})_{\rm const})_1\big) {\bf e}_1 \\
&\quad
- 2c_3\big(xy 
+ x ((V^{0,1}_{(1)})_{\rm const})_1\big){\bf e_1}
+ 2c_3y ((V^{0,1}_{(1)})_{\rm const})_1 {\bf e}_2 
+ c_3 y^2{\bf e_2} \\
&\quad
- 2c_3 (V^{1,1}_{(1)})_{\rm const}
+ c_3(V^{0,2}_{(2)})_{\rm const}, \quad c_i\in\R.
\end{aligned}
\end{equation*}

Next we derive the second-order wall law described by $w$. Taking the trace of $w(x,y)$ on $\{y=0\}$, we find 
\begin{equation}\label{eq3.subsec.Example}
\begin{aligned}
w(x,0) 
&=
c_1 ((V^{0,1}_{(1)})_{\rm const})_1 {\bf e_1}
+ c_2 ((V^{0,2}_{(1)})_{\rm const})_1 {\bf e_1} \\
&\quad 
+ c_3 \big(
- 2 x ((V^{0,1}_{(1)})_{\rm const})_1 {\bf e_1} 
- 2 (V^{1,1}_{(1)})_{\rm const}
+  (V^{0,2}_{(2)})_{\rm const} 
\big).
\end{aligned}
\end{equation}
Moreover, computing the derivatives, 
\begin{equation*}
\begin{aligned}
\partial_y \partial_x w(x,0) &= -2c_3 {\bf e_1}, \\
\partial_y^2 w(x,0) &= 2c_2 {\bf e_1} + 2c_3 {\bf e_2}, \\
\partial_y w(x,0) &= (c_1 - 2c_3 x) {\bf e_1} 
+ 2c_3 
((V^{0,1}_{(1)})_{\rm const})_1 {\bf e_2}.
\end{aligned}
\end{equation*}
The latter system can be solved for $(c_1,c_2,c_3)$:
\begin{equation*}
\begin{aligned}
c_1 &= \partial_y w_1(x,0) + 2c_3x = \partial_y w_1(x,0) - x \partial_y \partial_x w_1(x,0), \\
c_2 &= \frac{1}{2} \partial_y^2 w_1(x,0), \\
c_3 &= -\frac{1}{2} \partial_y \partial_x w_1(x,0). 
\end{aligned}
\end{equation*}
Inserting this into
\eqref{eq3.subsec.Example}, we obtain the following second-order wall law: 
\begin{equation*}
\begin{aligned}
w(x,0) 
&= 
\big(\partial_y w_1(x,0) - x \partial_y \partial_x w_1(x,0)\big) 
((V^{0,1}_{(1)})_{\rm const})_1 {\bf e_1} \\
&\quad
+ \frac{1}{2} \partial_y^2 w_1(x,0) 
((V^{0,2}_{(1)})_{\rm const})_1 {\bf e_1} \\
&\quad
-\frac{1}{2} \partial_y \partial_x w_1(x,0) 
\big(
- 2 x ((V^{0,1}_{(1)})_{\rm const})_1 {\bf e_1} 
- 2 (V^{1,1}_{(1)})_{\rm const}
+  (V^{0,2}_{(2)})_{\rm const} 
\big) \\
&= 
\Big(
((V^{0,1}_{(1)})_{\rm const})_1
\partial_y w_1(x,0)
+ \frac{1}{2} 
((V^{0,2}_{(1)})_{\rm const})_1
\partial_y^2 w_1(x,0) 
\Big) {\bf e_1} \\
&\quad
-\frac{1}{2} 
\big(
- 2 (V^{1,1}_{(1)})_{\rm const}
+  (V^{0,2}_{(2)})_{\rm const} 
\big)
\partial_y \partial_x w_1(x,0). 
\end{aligned}
\end{equation*}

In applications, the roughness of the boundary, i.e., amplitude and wavelength, are assumed to be small. Denoting it by $\ep$, after rescaling, the second-order wall law reads as 
\begin{equation}\label{eq4.subsec.Example}
\begin{aligned}
w(x,0) 
&= 
\Big(
\ep ((V^{0,1}_{(1)})_{\rm const})_1
\partial_y w_1(x,0)
+ \frac{\ep^2}{2} 
((V^{0,2}_{(1)})_{\rm const})_1
\partial_y^2 w_1(x,0) 
\Big)
{\bf e_1} \\
&\quad
-\frac{\ep^2}{2} 
\big(
- 2 (V^{1,1}_{(1)})_{\rm const}
+  (V^{0,2}_{(2)})_{\rm const} 
\big)
\partial_y \partial_x w_1(x,0).
\end{aligned}
\end{equation}
If we set $\lambda=((V^{0,1}_{(1)})_{\rm const})_1$ and neglect the terms of order $\ep^2$, then \eqref{eq4.subsec.Example} becomes 
\begin{equation*}
\begin{aligned}
w_1(x,0) = \ep \lambda \partial_y w_1(x,0), \qquad
w_2(x,0) = 0. 
\end{aligned}
\end{equation*}
It implies that the principal part of the the second-order wall law is the Navier wall. It is worth noting that $\lambda$ is known to be positive. This is called the slip length and plays an important role especially in the energy computation; see \cite{BDG12, H16} for details.

In contrast, the sign of $((V^{0,2}_{(1)})_{\rm const})_1$ does not seem to be known, which is important when one studies the second-order wall law in the global stationary/nonstationary settings. We refer to the numerical analysis in \cite[Section 6]{BM10} for the case of the Laplace equation. The results suggest that the sign of the corresponding constant is negative.

\section*{Appendix}
In this appendix, we verify that the pair $(V,Q)$ given by \eqref{est3.proof.prop.bl.F} is a solution of \eqref{eq1.proof.prop.bl.F}. First, a straightforward computation shows
\begin{equation}\label{apx.DeltaV}
    \Delta V = \sum_{k\in \mathbb{Z}^{d-1}\setminus \{0\} } \big\{ (\partial_z^2 \mathcal{V}_k)(y-L) -2|k|(\partial_z \mathcal{V}_k)(y-L)  \big\} e^{-|k|(y-L)} e^{\ii k\cdot x},
\end{equation}
and
\begin{equation}\label{apx.DQ}
    \nabla Q = \sum_{k\in \mathbb{Z}^{d-1}\setminus \{0\} } \bigg\{ \begin{bmatrix} \ii k \\ - |k| \\ \end{bmatrix} \mathcal{Q}_k(y-L) + \begin{bmatrix} 0 \\ (\partial_z \mathcal{Q}_k)(y-L) \\ \end{bmatrix}  \bigg\} e^{-|k|(y-L)} e^{\ii k\cdot x}.
\end{equation}
Then, by \eqref{est4.proof.prop.bl.F}, we compute
\begin{equation}
    (\partial_z \mathcal{V}_k)(y-L) = \frac{c_k}{|k|} \begin{bmatrix} \ii k \\ - |k| \\ \end{bmatrix} + (\partial_z \ul{\mathcal{V}}_k)(y-L),
\end{equation}
\begin{equation}
    (\partial_z^2 \mathcal{V}_k)(y-L) = (\partial_z^2 \ul{\mathcal{V}}_k)(y-L).
\end{equation}
Consequently,
\begin{equation}\label{apx.dVk-2}
\begin{aligned}
    & (\partial_z^2 \mathcal{V}_k)(y-L) - 2|k| (\partial_z \mathcal{V}_k)(y-L) \\
    & \qquad = (\partial_z^2 \ul{\mathcal{V}}_k)(y-L) - 2|k| (\partial_z \ul{\mathcal{V}}_k)(y-L) - 2c_k \begin{bmatrix} \ii k \\ - |k| \\ \end{bmatrix}.
\end{aligned}
\end{equation}
Next, we compute
\begin{equation}\label{apx.dVk-3}
    \partial_z \ul{\mathcal{V}}_k(z) = -\int_z^\infty \bigg\{ -\mathcal{F}_k(w) + \begin{bmatrix} \ii k \\ - |k| \\ \end{bmatrix} \ul{\mathcal{Q}}_k(w) +  \begin{bmatrix} 0 \\ (\partial_z \mathcal{Q}_k)(w) \\ \end{bmatrix}  \bigg\} e^{2|k|(z-w)} \dd w,
\end{equation}
and thus
\begin{equation}\label{apx.dVk-4}
    \partial_z^2 \ul{\mathcal{V}}_k(z) = \bigg\{ -\mathcal{F}_k(z) + \begin{bmatrix} \ii k \\ - |k| \\ \end{bmatrix} \ul{\mathcal{Q}}_k(z) +  \begin{bmatrix} 0 \\ (\partial_z \mathcal{Q}_k)(z) \\ \end{bmatrix}  \bigg\} + 2|k| \partial_z \ul{\mathcal{V}}_k(z).
\end{equation}
From \eqref{apx.dVk-2} and \eqref{apx.dVk-4}, we have
\begin{equation}
    \begin{aligned}
       & (\partial_z^2 \mathcal{V}_k)(y-L) - 2|k| (\partial_z \mathcal{V}_k)(y-L) \\
       & \quad = -\mathcal{F}_k(y-L) + \begin{bmatrix} \ii k \\ - |k| \\ \end{bmatrix} (-2c_k + \ul{\mathcal{Q}}_k(y-L) ) + \begin{bmatrix} 0 \\ (\partial_z \mathcal{Q}_k)(y-L) \\ \end{bmatrix}  \\
       & \quad = -\mathcal{F}_k(y-L) + \begin{bmatrix} \ii k \\ - |k| \\ \end{bmatrix}  {\mathcal{Q}}_k(y-L) + \begin{bmatrix} 0 \\ (\partial_z \mathcal{Q}_k)(y-L) \\ \end{bmatrix}
    \end{aligned}
\end{equation}
Substituting this into \eqref{apx.DeltaV}, we obtain
\begin{equation}
\begin{aligned}
    &-\Delta V \\
    & = \sum_{k\in \mathbb{Z}^{d-1}\setminus \{0\} } \bigg\{ \mathcal{F}_k(y-L) - \begin{bmatrix} \ii k \\ - |k| \\ \end{bmatrix}  {\mathcal{Q}}_k(y-L) - \begin{bmatrix} 0 \\ (\partial_z \mathcal{Q}_k)(y-L) \\ \end{bmatrix} \bigg\} e^{-|k|(y-L)} e^{\ii k\cdot x}.
\end{aligned}
\end{equation}
This, combined with \eqref{apx.DQ} and \eqref{assump1.prop.bl.F}, leads to
\begin{equation}\label{apx.VQ=F}
    -\Delta V + \nabla Q = \sum_{k\in \mathbb{Z}^{d-1}\setminus \{0\} } \mathcal{F}_k(y-L) e^{-|k|(y-L)} e^{\ii k\cdot x} = F.
\end{equation}
This proves the first equation in \eqref{eq1.proof.prop.bl.F}.

Now, we show $\nabla\cdot V = 0$. To this end, we first calculate
\begin{equation}\label{apx.DeltaQ}
    \Delta Q = \sum_{k\in \mathbb{Z}^{d-1}\setminus \{0\} } \big\{ (\partial_z^2 \mathcal{Q}_k)(y-L) - 2|k|(\partial_z \mathcal{Q}_k)(y-L) \big\} e^{-|k|(y-L)} e^{\ii k\cdot x}.
\end{equation}
Moreover,
\begin{equation}
    \partial_z \mathcal{Q}_k(z) = -\int_z^\infty \bigg\{ \begin{bmatrix} \ii k \\ - |k| \\ \end{bmatrix}\cdot \mathcal{F}_k(w) + (\partial_z \mathcal{F}_k)_d(w) \bigg\} e^{2|k|(z-w)} \dd w,
\end{equation}
\begin{equation}\label{apx.dz2Qk}
    \partial_z^2 \mathcal{Q}_k(z) = \begin{bmatrix} \ii k \\ - |k| \\ \end{bmatrix} \cdot \mathcal{F}_k(z) + (\partial_z \mathcal{F}_k)_d(z) + 2|k| (\partial_z \mathcal{Q}_k)(z).
\end{equation}
Combining \eqref{apx.DeltaQ} and \eqref{apx.dz2Qk}, we have
\begin{equation}\label{apx.DeltaQ=DF}
\begin{aligned}
    \Delta Q & = \sum_{k\in \mathbb{Z}^{d-1}\setminus \{0\} } \bigg\{ \begin{bmatrix} \ii k \\ - |k| \\ \end{bmatrix} \cdot \mathcal{F}_k(y-L) + (\partial_z \mathcal{F}_k)_d(y-L) \bigg\} e^{-|k|(y-L)} e^{\ii k\cdot x} \\
    & =
    \nabla \cdot F.
\end{aligned}
\end{equation}
Hence, \eqref{apx.VQ=F} and \eqref{apx.DeltaQ=DF} together yield that $\nabla\cdot V$ is harmonic, namely,
\begin{equation}\label{apx.DeltaDV}
    \Delta (\nabla\cdot V) = 0.
\end{equation}
To show $\nabla\cdot V = 0$, it suffices to show $\nabla\cdot V(\cdot, L) = 0$ and $\lim_{y\to \infty}\nabla\cdot V(\cdot,y) = 0$. In order to see this, we compute
\begin{equation}\label{apx.DV}
    \nabla\cdot V = \sum_{k\in \mathbb{Z}^{d-1}\setminus \{0\} } \bigg\{ \begin{bmatrix} \ii k \\ - |k| \\ \end{bmatrix}\cdot \mathcal{V}_k(y-L) + \begin{bmatrix} 0 \\ 1 \\ \end{bmatrix}\cdot (\partial_z \mathcal{V}_k)(y-L) \bigg\} e^{-|k|(y-L)} e^{\ii k\cdot x}.
\end{equation}
Using \eqref{est4.proof.prop.bl.F} and \eqref{def.ck}, we have
\begin{equation}\label{apx.DV-1}
\begin{aligned}
    & \begin{bmatrix} \ii k \\ - |k| \\ \end{bmatrix}\cdot \mathcal{V}_k(z) + \begin{bmatrix} 0 \\ 1 \\ \end{bmatrix}\cdot (\partial_z \mathcal{V}_k)(z) \\
    & = \begin{bmatrix} \ii k \\ - |k| \\ \end{bmatrix}\cdot \hat{V}_k - c_k + \begin{bmatrix} \ii k \\ - |k| \\ \end{bmatrix}\cdot (\ul{\mathcal{V}}_k(z) -  \ul{\mathcal{V}}_k(0) ) + \begin{bmatrix} 0 \\ 1 \\ \end{bmatrix} \cdot \partial_z \ul{\mathcal{V}}_k(z) \\
    & = \begin{bmatrix} \ii k \\ - |k| \\ \end{bmatrix}\cdot (\ul{\mathcal{V}}_k(z) - \ul{\mathcal{V}}_k(0)) + \begin{bmatrix} 0 \\ 1 \\ \end{bmatrix} \cdot ( \partial_z \ul{\mathcal{V}}_k(z) - \partial_z \ul{\mathcal{V}}_k(0) ).
\end{aligned}
\end{equation}
Clearly, \eqref{apx.DV} and \eqref{apx.DV-1} imply $\nabla\cdot V(\cdot, L) = 0$. On the other hand, notice that $\ul{\mathcal{V}}_k(z)$ is a polynomial of $z$, which means that \eqref{apx.DV-1} has at most polynomial growth as $z\to \infty$. This implies that $\nabla\cdot V(\cdot, y) \to 0$ as $y\to \infty$, due to the exponential decay factor $e^{-|k|(y-L)}$ in \eqref{apx.DV}. As a result, we derive $\nabla\cdot V = 0$ from \eqref{apx.DeltaDV}.

Finally, it is easy to see that $V(\cdot,L) = b$. Therefore, we have verified that $(V,Q)$ satisfies all the equations in \eqref{eq1.proof.prop.bl.F}.

\bibliography{Ref}
\bibliographystyle{plain}

\medskip

	\begin{flushleft}
	M. Higaki\\
	Department of Mathematics, 
	Kobe University, 1-1 Rokkodai, Nada-ku, Kobe 657-8501, 
	Japan.
	
	Email: higaki@math.kobe-u.ac.jp
	\end{flushleft}

\begin{flushleft}
J. Zhuge\\
Department of Mathematics,
University of Chicago,
Chicago, IL 60637,
USA.

Email: jpzhuge@uchicago.edu
\end{flushleft}

\medskip

\noindent \today

\end{document}